\newcommand{\rs}[1]{{\mbox{\scriptsize \sc #1}}}
\newcommand{\vc}[1]{{\boldsymbol #1}} 
\newcommand{\vcn}[1]{{\bf #1}}
\newcommand{\sr}[1]{{\cal #1}}
 \newcommand{\dd}[1]{\mathbb{#1}}
 \newcommand{\edge}{{\rm e}}
 \newcommand{\cp}{{\rm c}}
 \newcommand{\rmn}[1]{\if#11I\else {\if#12I\hspace{-0.12ex}I\hspace{-0.85ex}\else {\if #13I\hspace{-0.16ex}I\hspace{-0.16ex}I\hspace{-1.6ex}\else I\hspace{-1.2ex}V \fi} \fi} \fi}
\newcommand{\eqn}[1]{(\ref{eqn:#1})}
\newcommand{\lem}[1]{Lemma~\ref{lem:#1}}
\newcommand{\cor}[1]{Corollary~\ref{cor:#1}}
\newcommand{\thr}[1]{Theorem~\ref{thr:#1}}
\newcommand{\pro}[1]{Proposition~\ref{pro:#1}}
\newcommand{\rem}[1]{Remark~\ref{rem:#1}}
\newcommand{\fig}[1]{Figure~\ref{fig:#1}}
\newcommand{\app}[1]{Appendix~\ref{app:#1}}
\newcommand{\sectn}[1]{Section~\ref{sect:#1}}
\newcommand{\lemt}[1]{\ref{lem:#1}}
\newcommand{\thrt}[1]{\ref{thr:#1}}
\newcommand{\figt}[1]{\ref{fig:#1}}
\newcommand{\sect}[1]{\ref{sect:#1}}
\newcommand{\br}[1]{\langle #1 \rangle}
\newcommand{\ol}{\overline}
\newcommand{\ul}{\underline}
\newcommand{\pend}{\hfill \thicklines \framebox(6.6,6.6)[l]{}}
\newenvironment{proof}{\noindent {\sc  Proof.} \rm}{\pend}
\newenvironment{proof*}[1]{\noindent {\sc  #1} \rm}{\pend}
\newenvironment{keyword}[1]{ {\bf #1} \rm}{}
\newtheorem{theorem}{Theorem}[section]
\newtheorem{lemma}{Lemma}[section]
\newtheorem{proposition}{Proposition}[section]
\newtheorem{remark}{Remark}[section]
\newtheorem{corollary}{Corollary}[section]
\newtheorem{definition}{Definition}[section]
\newenvironment{mylist}[1]{\begin{list}{}
{\setlength{\itemindent}{#1mm}}
{\setlength{\itemsep}{0ex plus 0.2ex}}
{\setlength{\parsep}{0.5ex plus 0.2ex}}
{\setlength{\labelwidth}{10mm}}
}{\end{list}}
 \newcommand{\setnewcounter} {
 \setcounter{subsection}{0}
 \setcounter{equation}{0}
 \setcounter{conjecture}{0}
 \setcounter{assumption}{0}
 \setcounter{question}{0}
 \setcounter{definition}{0}
 \setcounter{theorem}{0}
 \setcounter{corollary}{0}
 \setcounter{lemma}{0}
 \setcounter{proposition}{0}
 \setcounter{remark}{0}
}
\begin{document}
 \title{Tail asymptotics of the stationary distribution of\\ a two dimensional reflecting random walk\\ with unbounded upward jumps}

\author{Masahiro Kobayashi and Masakiyo Miyazawa\\ Tokyo University of Science}
\date{May 5, 2014, proofed version}

\maketitle

\begin{abstract}
  We consider a two dimensional reflecting random walk on the nonnegative integer quadrant. This random walk is assumed to be skip free in the direction to the boundary of the quadrant, but may have unbounded jumps in the opposite direction, which are referred to as upward jumps. We are interested in the tail asymptotic behavior of its stationary distribution, provided it exists. Assuming the upward jump size distributions have light tails, we completely find the rough tail asymptotics of the marginal stationary distributions in all directions. This generalizes the corresponding results for the skip free reflecting random walk in \cite{Miya2009}. We exemplify these results for a two node network with exogenous batch arrivals.
\end{abstract}

\begin{keyword}{AMS classification:}
Primary; 60K25, 60K25, 
Secondary; 60F10, 60G50
\end{keyword}

\begin{keyword}{Keywords:}
two dimensional reflecting random walk, stationary distribution, tail asymptotics, linear convex order, unbounded jumps, two node queueing network, batch arrival
\end{keyword}

\section{Introduction}
\label{sect:Introduction}

  We consider a two dimensional reflecting random walk on the nonnegative integer quadrant. This random walk is assumed to be skip free toward the boundary of the quadrant but may have unbounded jumps in the opposite direction, which we call upward jumps. Here, the boundary is composed of the origin and two half coordinate axes, which are called boundary faces. The transitions on each boundary face are assumed to be homogeneous. This reflecting process is referred to as a double $M/G/1$-type process. This process naturally arises in queueing networks with two dimensional compound Poisson arrivals and exponentially distributed service times. Here, customers may simultaneously arrive in batch at different nodes. It also has own interest as a multidimensional reflecting random walk on the nonnegative integer quadrant.
  
  A stationary distribution is one of the most important characteristics for this reflecting random walk in application. However, it is known very hard to analytically derive it except for some special cases. Thus, theoretical interest has been directed to its tail asymptotic behaviors. Borovkov and Mogul'ski{\u\i} \cite{BoroMogu2001} made great contributions to this problem. They proposed the so called partially homogenous chain, which includes the present random walk as a special case, and studied the tail asymptotic behavior of its stationary distribution. Their results are very general, but have the following limitations.
\begin{mylist}{3}
\item [(a)] The results are not very explicit. That is, it is hard to see how the modeling primitives, that is, the parameters which describe the model, influence the tail asymptotics.
\item [(b)] The tail asymptotics are only obtained for small rectangles. No marginal distribution is considered. Furthermore, some extra technical conditions which seem to be unnecessary are assumed.
\end{mylist}

  For the skip free two-dimensional reflecting random walk, these issues have been addressed by Miyazawa \cite{Miya2009}, and its tail asymptotics have been recently answered for the marginal distributions in \cite{KobaMiya2012}, which considers the stationary equation using generating or moment generating functions and applies classical results of complex analysis (see, e.g., \cite{Miya2011} and references therein). This classical approach has been renewed in combination with other methods in recent years (e.g., see \cite{GuilLeeu2011,LiZhao2011}).  However, its application is limited to skip free processes because of technical reasons (see \rem{kernel method}).
  
  In this paper, our primary interest is to see how the tail asymptotics are changed when upward jumps may be unbounded. For this, we consider the asymptotics of the stationary probability of the tail set:
\begin{eqnarray*}
  \{ (i_{1}, i_{2}) \in \dd{Z}_{+}^{2}; c_{1} i_{1} + c_{2} i_{2} \ge x \}
\end{eqnarray*}
  as $x$ goes to infinity for each $c_{1}, c_{2} \ge 0$, where $\dd{Z}_{+}$ is the set of all nonnegative integers, where $\vc{c} = (c_{1}, c_{2})$ is called a directional vector if $c_{1}^{2} + c_{2}^{2} = 1$. We are interested in its decay rate, where $\alpha$ is said to be the decay rate of function $p(x)$ if
\begin{eqnarray*}
  \alpha = - \lim_{n \to \infty} \frac 1{x} \log p(x).
\end{eqnarray*}
  We aim to derive the tail decay rates of the marginal distributions in all directions. These decay rates will be geometrically obtained from the curves which are produced by the modeling primitives, that is, one step transitions of the reflecting random walk. In this way, we answer our question, which simultaneously resolves the issues (a) and (b). 
  
  Obviously, if the tail decay rates are positive, then the one step transitions of the random walk must have light tails, that is, they decay exponentially fast. Thus, we assume this light tail condition. Then, the mean drifts of the reflecting random walk in the interior and on the boundary faces are finite. Using these means, Fayolle, Iasnogorodski and Malyshev \cite{FayoMalyMens1995} characterize stability, that is, the existence of the stationary distribution. However, their result is incomplete as they did not consider all cases, which we redress in \lem{stability d=2}. If the mean drifts vanish in all directions, then it is not hard to see that the stationary distribution does not have light tails in all directions. We also formally verify this fact. Thus, we assume that not all the mean drifts vanish in addition to the light tail condition for the one step transitions.
  
   Under these conditions, we completely solve the decay rate problem for the marginal stationary distribution in each direction, provided the stability conditions hold. The decay rate may be considered as rough asymptotic, and we also study some finer asymptotics, called exact asymptotics. Here, the tail probability is said to have exact asymptotics for some function $f$ if the ratio of the tail probability at level $x$ to $f(x)$ converges to a positive constant as $x$ goes to infinity. In particular, if $f$ is exponential (or geometric), it is said to be exactly exponential (or geometric). We derive some sufficient conditions for the tail asymptotic to be exactly exponential. 
  
  The difficulty of the tail asymptotic problem mainly arises from reflections at the boundary of the quadrant. We have two major boundary faces corresponding to the coordinate axes. The reflections on these faces may or may not influence the tail asymptotics. Foley and McDonald \cite{FoleMcDo2005} classified them as jitter, branch and cascade cases. We need to consider all of those influence to find the tail asymptotics. This problem becomes harder because of the unbounded jumps. 
  
  To overcome these difficulties, we here take a new approach. Based on the stability conditions, we first derive the convergence domain of the moment generating function of the stationary distribution. For this, we use a stationary inequality, which was recently introduced by the second author \cite{Miya2011} (see also \cite{DaiMiya2011}), and a lower bound for the large deviations of the stationary distribution due to \cite{BoroMogu2001}. Once the domain is obtained, it is expected that the decay rate would be obtained through the boundary of the domain. However, this is not immediate. We need sharper lower bounds for the large deviations in coordinate directions. For this, we use a method based on Markov additive processes (e.g., see \cite{MiyaZhao2004}).
  
   We apply one of our main results, Theorems \thrt{decay rate 0} and \thrt{decay rate 1}, to a batch arrival network with two nodes to see how the modeling primitives influence the tail asymptotics. We use linear convex order for this. We also show that the stochastic upper bound for the stationary distribution obtained by Miyazawa and Taylor \cite{MiyaTayl1997} is not tight unless one of the nodes has no batch arrival.
  
  This paper is made up by six sections. In \sectn{Double}, we formally introduce the reflecting random walk, and discuss its basic properties including stability and stationary equations. In \sectn{Convergence}, main results on the domain and tail asymptotics, Theorems \thrt{domain D}, \thrt{decay rate 0} and \thrt{decay rate 1}, are presented. We also discuss about linear convex order. Those theorems are proved in \sectn{Proofs}. We apply them to a two node network with exogenous batch arrivals in \sectn{Application}. We give some remarks on extensions and further work in \sectn{Concluding}.

\section{Double $M/G/1$-type process}
\label{sect:Double}
\setnewcounter

 Denote the state space by $S=\dd{Z}^{2}_{+}$. Recall that $\dd{Z}_{+}$ is the set of all nonnegative integers. Similarly, $\dd{Z}$ denotes the set of all integers. Define the boundary faces of $S$ as
\begin{eqnarray*}
S_{0} = \{ (0,0) \}, \hspace{3mm} S_{1} = \{(i,0) \in \dd{Z}^{2}_{+}; i \geq 1 \}, \hspace{3mm} S_{2} = \{(0,i) \in \dd{Z}^{2}_{+}; i \geq 1 \}.
\end{eqnarray*}
  Let $\partial S = \cup_{i=0}^{2} S_{i}$ and $S_{+} = S \setminus \partial S$. We refer to $\partial S$ and $S_{+}$ as the boundary and interior of $S$, respectively.
  
  Let $\{\vc{Y}(\ell); \ell=0,1,\ldots\}$ be the random walk on $\dd{Z}^{2}$. That is, its one step increments $\vc{Y}(\ell+1) - \vc{Y}(\ell)$ are independent and identically distributed. We denote a random vector subject to the distribution of these increments by $\vc{X}^{(+)}$. We generate a reflecting process $\{\vc{L}(\ell)\} \equiv \{(L_{1}(\ell),L_{2}(\ell))\}$ from this random walk $\{\vc{Y}(\ell)\}$ in such a way that it is a discrete Markov chain with state space $S$ and the transition probabilities $p(\vc{i}, \vc{j})$ given by
\begin{eqnarray*}
P(\vc{L}(\ell+1) = \vc{j} |\vc{L}(\ell) = \vc{i}) = \left\{ 
\begin{array}{ll}
P(\vc{X}^{(+)} = \vc{j}-\vc{i}),  & \vc{j} \in S, \vc{i} \in S_{+},\\
P(\vc{X}^{(k)} = \vc{j}-\vc{i}), & \vc{j} \in S, \vc{i} \in S_{k}, k=0,1,2,
\end{array}
\right.
\end{eqnarray*}
where $\vc{X}^{(k)}$ is a random vector taking values in $\dd{Z}^{2}$. For this process to be non defective, it is assumed that $\vc{X}^{(+)} \geq (-1,-1)$, $\vc{X}^{(1)} \geq (-1,0)$, $\vc{X}^{(2)} \geq (0,-1)$, and $\vc{X}^{(0)} \geq (0,0)$ respectively. Here, inequalities of vectors are meant to those in component-wise.

Thus, $\vc{L}(\ell)$ is reflected at the boundary $\partial S$ and skip free in the direction to $\partial S$. In particular, their entries $L_{1}(\ell)$ and $L_{2}(\ell)$ have similar transitions to the queue length process of the $M/G/1$ queue. So, we refer to $\{\vc{L}(\ell)\}$ as a double $M/G/1$-type process.

 Let $\dd{R}$ be the set of all real numbers. Similar to $\dd{Z}_{+}$, let $\dd{R}_{+}$ be the all nonnegative real numbers. We denote the moment generating functions of $\vc{X}^{(+)}$ and $\vc{X}^{(k)}$ by $\gamma$ and $\gamma_{k}$, that is, for $\vc{\theta} \equiv (\theta_{1},\theta_{2}) \in \dd{R}^{2}$,
\begin{eqnarray*}
\gamma(\vc{\theta}) = E(e^{\br{\vc{\theta}, \vc{X}^{(+)}}}), & \gamma_{k}(\vc{\theta}) = E(e^{\br{\vc{\theta}, \vc{X}^{(k)}}}), & k=0,1,2,
\end{eqnarray*} 
where $\br{\vc{a}, \vc{b}}$ is inner product of vectors $\vc{a}$ and $\vc{b}$. As usual, $\dd{R}^{2}$ is considered to be a metric space with Euclidean norm $\| \vc{a} \| \equiv \sqrt{\br{\vc{a}, \vc{a}}}$. In this paper, we assume that
\begin{mylist}{0}
\item [(i)] The random walk $\{\vc{Y}(\ell)\}$ is irreducible and aperiodic. 
\item [(ii)] The reflecting process $\{\vc{L}(\ell)\}$ is irreducible and aperiodic.
\item [(iii)] For each $\vc{\theta} \in \dd{R}^{2}$ satisfying $\theta_{1} > 0$ or $\theta_{2} > 0$, there exist $t > 0$ and $t_{k} > 0$ such that $1 < \gamma(t\vc{\theta}) < \infty$ and $1 < \gamma_{k}(t_{k} \vc{\theta}) < \infty$ for $k=0,1,2$. 
\item [(iv)] Either $E(X^{(+)}_{1}) \ne 0$ or $E(X^{(+)}_{2}) \ne 0$ for $\vc{X}^{(+)} = (X^{(+)}_{1}, X^{(+)}_{2})$.
\end{mylist}
The conditions (i) and (iii) are stronger than what are actually required, but we use them for simplifying arguments. For example, except for the exact asymptotics, (iii) can be weaken to the following condition (see \rem{Gamma}). 
\begin{mylist}{2}
\item [(iii)'] $\gamma(\vc{\theta})$ and $\gamma_{k}(\vc{\theta})$ for $k=0,1,2$ are finite for some $\vc{\theta} > \vc{0}$. 
\end{mylist}

  In the rest of this section, we discuss three basic topics. First, we consider necessary and sufficient conditions for stability of the reflecting random walk $\{\vc{L}(\ell)\}$, that is, the existence of its stationary distribution, and explain why (iv) is assumed. We then formally define rough and exact asymptotics.

\subsection{Stability condition and tail asymptotics}
\label{sect:Stability}

It is claimed in the book of Fayolle, Malyshev and Menshikov \cite{FayoMalyMens1995} that necessary and sufficient conditions for stability are obtained (see Theorem 3.3.1 of the book). However, the proof of their Theorem 3.3.1 is incomplete because important steps are omitted. A better proof be found in \cite{Fayo1989}. Furthermore, some exceptional cases are missing as we will see. Nevertheless, those necessary and sufficient conditions can be valid under minor amendment.

In \cite{FayoMalyMens1995}, those necessary and sufficient conditions are separately considered according to whether all the mean drifts are null, that is, $E(X^{(+)}_{1}) = E(X^{(+)}_{2}) = 0$ or not. One may easily guess that the null drift implies that the stationary distribution has a heavy tail in all directions, that is, the tail decay rates in all directions vanish. We formally prove this fact in \rem{infinite domain}. Thus, we can assume (iv) to study the light tail asymptotics.

  We now present the stability conditions of \cite{FayoMalyMens1995} under the assumption (iv). We will consider their geometric interpretations. For this, we introduce some notation. Let, for $k=1,2$,
\begin{eqnarray*}
  m_{k} = E(X^{(+)}_{k}), \qquad m^{(1)}_{k} = E(X^{(1)}_{k}), \qquad m^{(2)}_{k} = E(X^{(2)}_{k})
\end{eqnarray*}
  Define vectors
\begin{eqnarray*}
 && \vc{m} = (m_{1}, m_{2}), \qquad \vc{m}^{(1)} = (m^{(1)}_{1}, m^{(1)}_{2}), \qquad \vc{m}^{(2)} = (m^{(2)}_{1}, m^{(2)}_{2}),\\
 && \vc{m}^{(1)}_{\bot} = (m^{(1)}_{2}, - m^{(1)}_{1}), \qquad \vc{m}^{(2)}_{\bot} = (- m^{(2)}_{2}, m^{(2)}_{1}).
\end{eqnarray*}
  Note that $\vc{m} \ne \vc{0}$ by condition (iv). Obviously, $\vc{m}^{(k)}_{\bot}$ is orthogonal to $\vc{m}^{(k)}$ for each $k=1,2$. We present the stability conditions of \cite{FayoMalyMens1995} using these vectors below, in which we make some minor corrections for the missing cases.
  
\begin{lemma}[Corrected Theorem 3.3.1 of \cite{FayoMalyMens1995}]
\label{lem:stability d=2}
  If $\vc{m} \ne \vc{0}$, the reflecting random walk $\{\vc{Z}(\ell)\}$ has the stationary distribution, that is, it is stable, if and only if either one of the following three conditions hold.
\begin{mylist}{-3}
\item [(\rmn{1})] $m_{1} < 0, m_{2} < 0$, $\br{\vc{m}, \vc{m}^{(1)}_{\bot}} < 0$, and $\br{\vc{m}, \vc{m}^{(2)}_{\bot}} < 0$.
\item [(\rmn{2})] $m_{1} \ge 0, m_{2} < 0$, $\br{\vc{m}, \vc{m}^{(1)}_{\bot}} < 0$. In addition to these conditions, $m^{(2)}_{2} < 0$ is required if $m^{(2)}_{1} = 0$. 
\item [(\rmn{3})] $m_{1} < 0, m_{2} \ge 0$, $\br{\vc{m}, \vc{m}^{(2)}_{\bot}} < 0$.  In addition to these conditions, $m^{(1)}_{1} < 0$ is required if $m^{(1)}_{2} = 0$.
\end{mylist}
\end{lemma}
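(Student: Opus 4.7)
The plan is to reduce stability to the behavior of the associated deterministic fluid model, whose trajectories move with velocity $\vc{m}$ in the interior $S_+$, with velocity $\vc{m}^{(1)}$ on $S_1$, with velocity $\vc{m}^{(2)}$ on $S_2$, and are absorbed at $S_0$. Under the light-tail assumption (iii), all mean drifts are finite, so the fluid-limit machinery of Dai/Meyn (or equivalently, the semi-martingale Foster criterion of Malyshev--Menshikov as corrected in \cite{Fayo1989}) applies: the random walk is positive recurrent iff every fluid trajectory reaches $\vc{0}$ in finite time, and transient iff some trajectory escapes to infinity. The geometric content of conditions (I)--(III) is that, when a trajectory starting in $S_+$ hits $S_k$ and slides along it with velocity $\vc{m}^{(k)}$, it eventually re-enters $S_+$ with the interior drift pulling it back toward $\vc{0}$; the inner product $\br{\vc{m},\vc{m}^{(k)}_\bot}$ is (up to sign) the determinant $\det(\vc{m},\vc{m}^{(k)})$, and its negativity records exactly this.

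For sufficiency I would exhibit, for each of the three cases, a piecewise linear Lyapunov function $V:\dd{R}_+^2\to\dd{R}_+$ with negative expected drift outside a finite set. In case (I), the simple choice $V(\vc{i})=a_1 i_1+a_2 i_2$ with positive $a_1,a_2$ chosen so that $\br{\vc{a},\vc{m}}<0$, $\br{\vc{a},\vc{m}^{(1)}}<0$, $\br{\vc{a},\vc{m}^{(2)}}<0$ works, and the feasibility of this linear program is equivalent to $m_1<0$, $m_2<0$, together with $\br{\vc{m},\vc{m}^{(k)}_\bot}<0$ for $k=1,2$. In case (II), where $m_1\ge 0$ so no purely linear $V$ is decreasing in the interior, I would use $V(\vc{i})=\max(\alpha i_1+\beta i_2,\gamma i_2)$ with $\alpha,\beta,\gamma>0$ chosen so that $\alpha m_1+\beta m_2<0$, $\alpha m^{(1)}_1+\beta m^{(1)}_2<0$ (this exists because $\br{\vc{m},\vc{m}^{(1)}_\bot}<0$ pins down the required half-plane), and $\gamma m^{(2)}_2<0$ with $\gamma$ taken large enough that $V$ is governed by the second branch on $S_2$. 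Case (III) is symmetric. A standard Foster calculation, using only first moments of the jumps, then yields positive recurrence.

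For necessity I would verify that, if any of the listed inequalities fails (strictly), a sample-path LLN argument along the boundary forces $\|\vc{L}(\ell)\|\to\infty$: for instance if $m_1<0$, $m_2<0$ but $\br{\vc{m},\vc{m}^{(1)}_\bot}\ge 0$, then once the process enters a small cone around $S_1$, the combined effect of sliding with drift $\vc{m}^{(1)}$ and occasional interior excursions with drift $\vc{m}$ yields a nonnegative average radial drift, so $\vc{L}(\ell)/\ell$ does not vanish; this rules out positive recurrence. The main obstacle lies precisely in the exceptional cases that \cite{FayoMalyMens1995} overlooked, namely when some $m^{(k)}_j=0$: then $\br{\vc{m},\vc{m}^{(k)}_\bot}$ degenerates and the generic Lyapunov/LLN argument pushes on a zero direction. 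The remedy in (II) is that, when $m^{(2)}_1=0$, the boundary $S_2$ is invariant under the drift and behaves like a one-dimensional $M/G/1$-type walk whose stability is governed by $m^{(2)}_2<0$; the analogous degeneracy in (III) gives $m^{(1)}_1<0$. Handling these edge cases cleanly, and carefully ruling out drift combinations that make the standard piecewise-linear Lyapunov function non-decreasing on a boundary face, is the delicate point that the corrected statement captures.
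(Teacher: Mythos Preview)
The paper does not actually give a self-contained proof of this lemma. It cites Theorem~3.3.1 of \cite{FayoMalyMens1995} for the generic cases and, in Remark~\ref{rem:stability d=2}, only explains the correction: since the model forces $X^{(2)}_1\ge 0$ a.s., the hypothesis $m^{(2)}_1=E(X^{(2)}_1)=0$ means $X^{(2)}_1=0$ a.s., so $S_2$ is absorbing (apart from the origin) and the walk restricted to $S_2$ is a one-dimensional $M/G/1$ chain whose positive recurrence requires $m^{(2)}_2<0$. For sufficiency under the added clause the paper simply says ``trace the proof in \cite{Fayo1989,FayoMalyMens1995}.'' Your sketch is therefore far more ambitious than what the paper itself provides; the fluid/Foster--Lyapunov route you outline is exactly the machinery those references use.

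There is, however, a real gap in your sufficiency argument for case~(II). Your piecewise linear function $V(\vc{i})=\max(\alpha i_1+\beta i_2,\gamma i_2)$ is chosen so that the second branch governs on $S_2$ and has drift $\gamma m^{(2)}_2$, which you then want negative. But case~(II) imposes \emph{no} sign condition on $m^{(2)}_2$ when $m^{(2)}_1>0$; one can perfectly well have $m^{(2)}_1>0$ and $m^{(2)}_2>0$, and then no positive $\gamma$ makes your drift on $S_2$ negative. The intuition you state for the degenerate case (``$S_2$ behaves like a one-dimensional walk'') is the right one, but it rests on the nonnegativity constraint $X^{(2)}_1\ge 0$, not merely on the drift being tangent; you should make that explicit. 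In the non-degenerate regime $m^{(2)}_1>0$ the mechanism is genuinely different: the walk leaves $S_2$ quickly and the interior drift $m_2<0$ then takes over, which a two-piece linear $V$ does not capture. This is precisely why \cite{FayoMalyMens1995} resort to the ``second vector field'' construction (or, in Dai's fluid language, to following the full piecewise-linear fluid trajectory through $S_2\to S_+\to S_1$) rather than a simple $\max$ of linear pieces. Your plan is sound in outline, but the Lyapunov function in~(II) needs either an additional piece handling the cone near $S_2$, or replacement by the induced-chain argument on successive visits to $S_1\cup S_0$.
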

\begin{remark}
\label{rem:stability d=2}
  The additional conditions $m^{(2)}_{2} < 0$ for $m^{(2)}_{1} = 0$ in (\rmn{2}) and $m^{(1)}_{1} < 0$ for $m^{(1)}_{2} = 0$ in (\rmn{3}) are missing in Theorem 3.3.1 of \cite{FayoMalyMens1995}. To see their necessity, let us assume $m^{(2)}_{1} = 0$ in (\rmn{2}). This implies that the reflecting random walk can not get out from the 2nd coordinate axis except for the origin once it hits the axis. On the other hand, $m^{(2)}_{2}$ may take any value because of no constrain on $m^{(2)}_{2}$ in the first three conditions of (\rmn{2}). Hence, $m^{(2)}_{2} < 0$ is necessary for the stability. By symmetry, the extra condition is similarly required for (\rmn{3}). It is not hard to see the sufficiency of these conditions if the proof in \cite{Fayo1989,FayoMalyMens1995} is traced, so we omit its verification.
\end{remark}
  
  We will see that the stability conditions are closely related to the curves $\gamma(\vc{\theta}) = 1$ and $\gamma_{k}(\vc{\theta}) = 1$. So, we introduce the following notation.
\begin{eqnarray*}
 && \Gamma = \{ \vc{\theta} \in \dd{R}^{2}; \gamma(\vc{\theta}) < 1 \},\qquad  \partial \Gamma = \{ \vc{\theta} \in \dd{R}^{2}; \gamma(\vc{\theta}) = 1 \},\\
 && \Gamma_{k} = \{ \vc{\theta} \in \dd{R}^{2}; \gamma_{k}(\vc{\theta}) < 1 \}, \quad \; \partial \Gamma_{k} = \{ \vc{\theta} \in \dd{R}^{2}; \gamma_{k}(\vc{\theta}) = 1 \}, \quad k=1,2.
\end{eqnarray*}
\begin{remark}
\label{rem:Gamma}
  (a) $\Gamma$ and $\Gamma_{k}$ are convex sets because $\gamma$ and $\gamma_{k}$ are convex functions. Furthermore, $\Gamma$ is a bounded set by the condition (i). (b) By the condition (iii), $\partial \Gamma$ and $\partial \Gamma_{k}$ are continuous curves. If we replace (iii) by the weaker assumption (iii)', then $\partial \Gamma$ and $\partial \Gamma_{k}$ may be empty sets. In these cases, we redefine them as the boundaries of $\Gamma$ and $\Gamma_{k}$, respectively. This does not change our arguments as long as the solutions of $\gamma(\vc{\theta}) = 1$ or $\gamma_{k}(\vc{\theta}) = 1$ are not analytically used. Thus, we will see that the rough asymptotics are still valid under (iii)', but this is not the case for the exact asymptotics.
\end{remark}
    
\begin{figure}[h]
 	\centering
	\includegraphics[height=4.9cm]{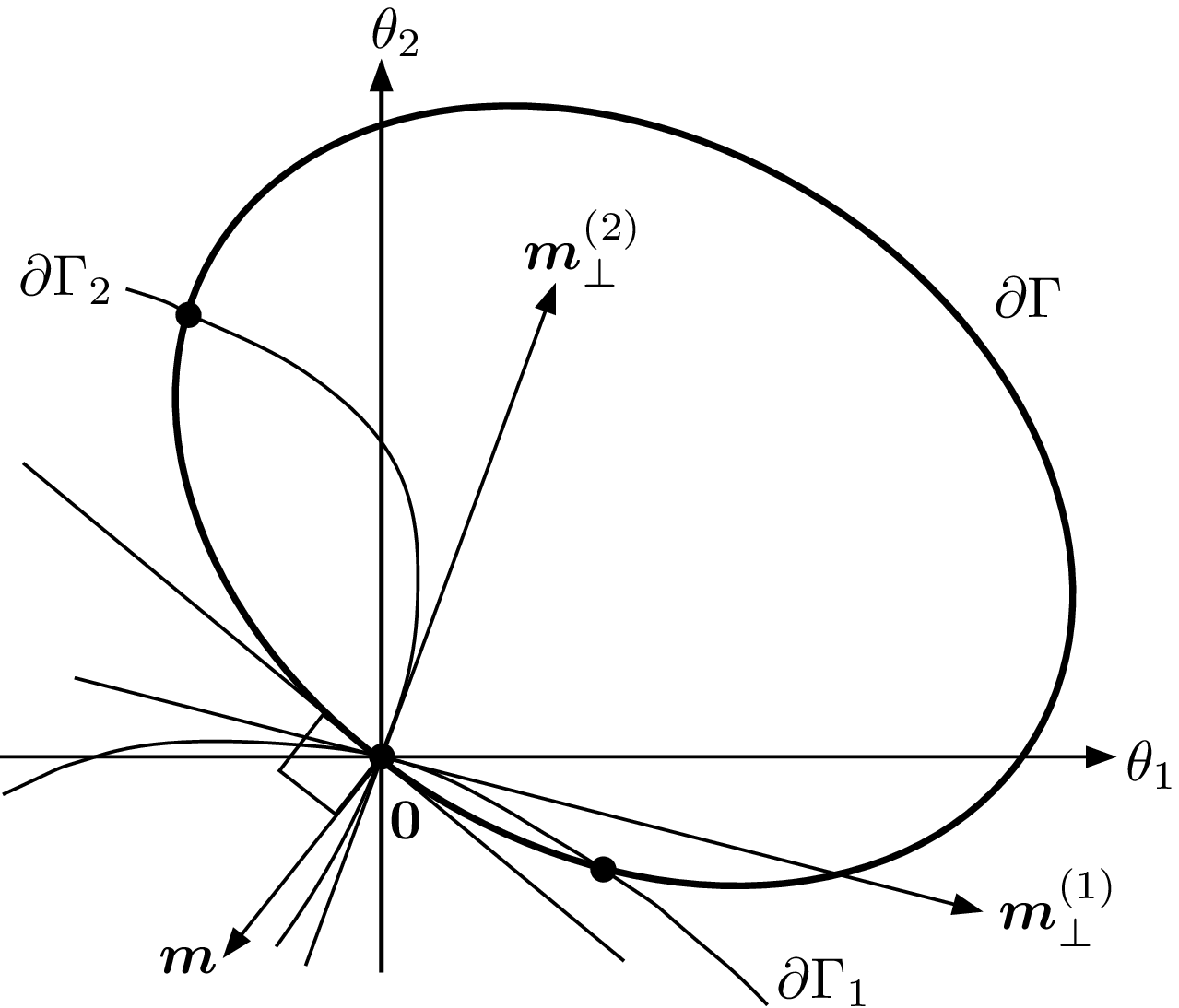} $\;$
	\includegraphics[height=4.4cm]{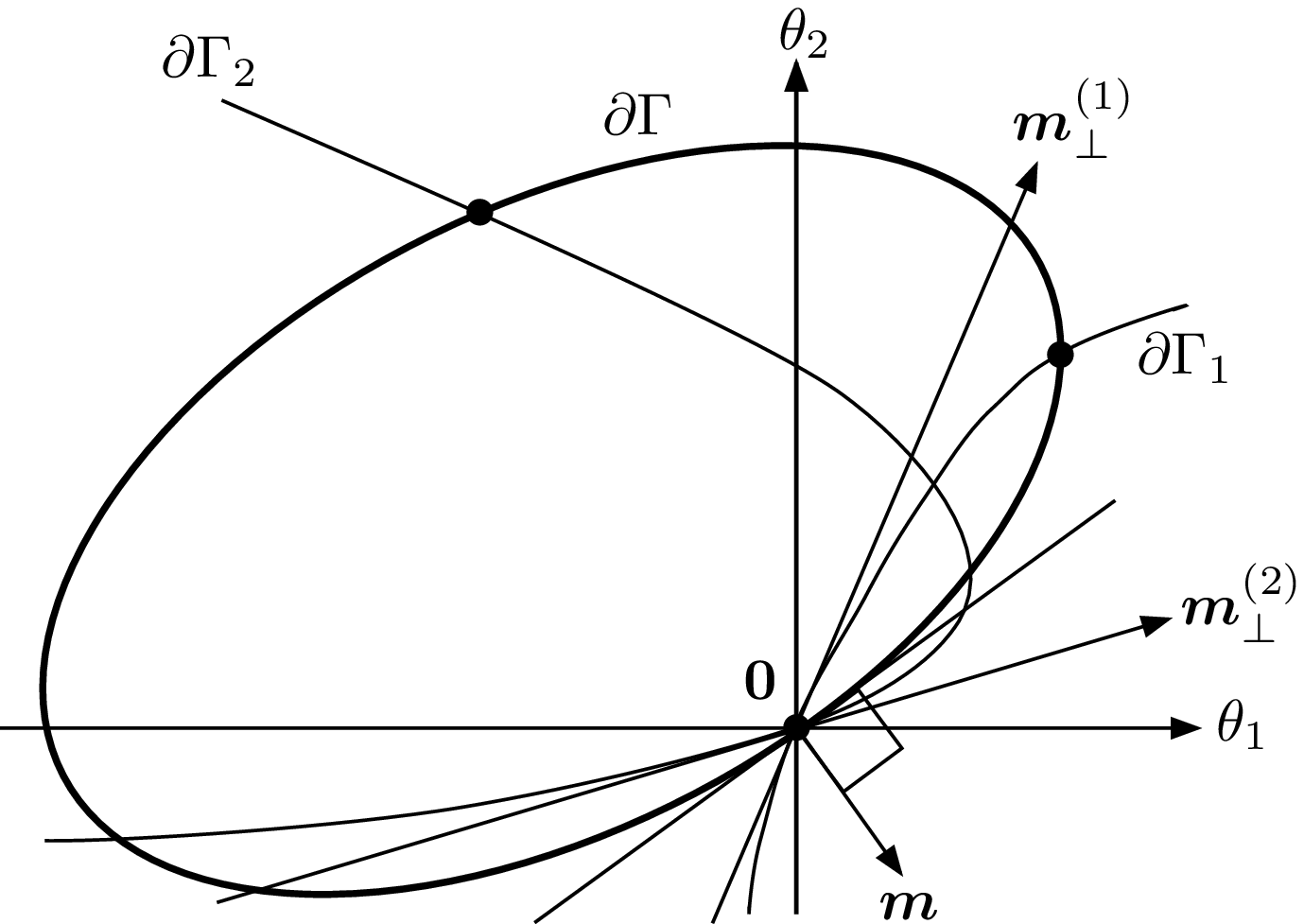}
	\caption{Vectors for conditions (\rmn{1}) and (\rmn{2})}
	\label{fig:stability condition}
\end{figure}
  
  Note that $\vc{m}$ and $\vc{m}^{(k)}$ are normal to the tangents of $\partial \Gamma$ and $\partial \Gamma_{k}$ at the origin and toward the outside of $\Gamma$ and $\Gamma_{k}$, respectively (see \fig{stability condition}). From this observation, we can get the following lemma, which gives a geometric interpretation of the stability condition. We prove it in \app{geometric 1}.
\begin{lemma}
\label{lem:geometric 1}
  Under conditions (iii) and (iv), the reflecting random walk $\{\vc{L}(\ell)\}$ has the stationary distribution if and only if $\Gamma \cap \Gamma_{k}$ contains a vector $\vc{\theta}$ such that $\theta_{k} > 0$ for each $k=1,2$. Furthermore, if this is the case, at least for either one of $k = 1, 2$, there exists a $\vc{\theta} \in \Gamma \cap \Gamma_{k}$ such that $\theta_{k} > 0$ and $\theta_{3-k} \le 0$.
\end{lemma}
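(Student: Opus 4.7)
The plan is to reduce the topological condition on $\Gamma \cap \Gamma_{k}$ to a linear condition on the drift vectors at the origin, and then match that linear condition, case by case, against the three stability regimes of \lem{stability d=2}.

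First, I claim the linearization: $\Gamma \cap \Gamma_{k}$ contains a vector $\vc{\theta}$ with $\theta_{k} > 0$ if and only if there exists $\vc{\theta} \in \dd{R}^{2}$ with $\theta_{k} > 0$, $\br{\vc{m}, \vc{\theta}} < 0$ and $\br{\vc{m}^{(k)}, \vc{\theta}} < 0$. The forward direction is immediate from convexity: since $\gamma(\vc{0}) = 1$ and $\nabla \gamma(\vc{0}) = \vc{m}$, the subgradient inequality gives $\gamma(\vc{\theta}) \geq 1 + \br{\vc{m}, \vc{\theta}}$, so $\gamma(\vc{\theta}) < 1$ forces $\br{\vc{m}, \vc{\theta}} < 0$, and likewise for $\vc{m}^{(k)}$. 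The backward direction uses the first-order expansion $\gamma(\epsilon \vc{\theta}) = 1 + \epsilon \br{\vc{m}, \vc{\theta}} + O(\epsilon^{2})$ (and similarly for $\gamma_{k}$): for sufficiently small $\epsilon > 0$, the rescaled point $\epsilon \vc{\theta}$ lies in $\Gamma \cap \Gamma_{k}$ with positive $k$-th coordinate.

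Next, by Gordan's theorem of the alternative, existence of such a $\vc{\theta}$ for a fixed $k$ is equivalent to the three vectors $-\vc{e}_{k}, \vc{m}, \vc{m}^{(k)}$ admitting no nontrivial nonnegative combination summing to zero, which in the plane amounts to their lying in a common open half-plane through the origin. I would then verify, case by case, that requiring this simultaneously for $k=1$ and $k=2$ is exactly the disjunction of conditions (\rmn{1}), (\rmn{2}), (\rmn{3}) of \lem{stability d=2}. The support constraints $\vc{X}^{(1)} \geq (-1,0)$ and $\vc{X}^{(2)} \geq (0,-1)$ give $m^{(1)}_{2} \geq 0$ and $m^{(2)}_{1} \geq 0$, so $\vc{m}^{(1)}$ and $\vc{m}^{(2)}$ lie in the closed upper and right half-planes respectively. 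The identities $\br{\vc{m}, \vc{m}^{(1)}_{\bot}} = m_{1} m^{(1)}_{2} - m_{2} m^{(1)}_{1}$ and $\br{\vc{m}, \vc{m}^{(2)}_{\bot}} = m_{2} m^{(2)}_{1} - m_{1} m^{(2)}_{2}$ then translate the determinant inequalities in \lem{stability d=2} directly into the angular constraints needed for the triples $\{-\vc{e}_{k}, \vc{m}, \vc{m}^{(k)}\}$ to fit in a common open half-plane; the supplemental hypotheses flagged in \rem{stability d=2} cover the degenerate sub-cases $m^{(1)}_{2} = 0$ and $m^{(2)}_{1} = 0$, where the triple would otherwise be collinear.

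For the ``furthermore'' part, I would give an explicit $\vc{\theta}$ with $\theta_{3-k} \leq 0$ in each case. In (\rmn{1}), take $k=1$ and $\vc{\theta} = (1, -\epsilon)$ with $\epsilon \geq 0$ small enough that $\br{\vc{m}, \vc{\theta}} < 0$ (possible since $m_{1} < 0$) and, using $\br{\vc{m}, \vc{m}^{(1)}_{\bot}} < 0$, also $\br{\vc{m}^{(1)}, \vc{\theta}} < 0$; then $\theta_{2} = -\epsilon \leq 0$. In (\rmn{2}), symmetrically take $k=2$ and $\vc{\theta} = (-\epsilon, 1)$ so that $\theta_{1} \leq 0$; case (\rmn{3}) is the mirror image. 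The main obstacle is the middle case analysis: keeping strict versus non-strict inequalities correctly aligned between the linear picture and the determinant conditions, and dealing with the degenerate sub-cases in \rem{stability d=2}, where the additional hypotheses on $m^{(1)}_{1}$ or $m^{(2)}_{2}$ are precisely what is needed to avoid a collinearity that would collapse the open-half-plane argument.
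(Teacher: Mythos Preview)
Your linearization step---reducing membership in $\Gamma \cap \Gamma_{k}$ near the origin to the inequalities $\br{\vc{m},\vc{\theta}}<0$, $\br{\vc{m}^{(k)},\vc{\theta}}<0$, $\theta_{k}>0$---is exactly what the paper does (\app{geometric 1}, first claim). Your Gordan-theorem reformulation is a pleasant organizing device that the paper does not invoke, but it does not spare you the case analysis: the paper establishes the equivalence with (\rmn{1})--(\rmn{3}) by explicitly exhibiting, in each regime, a vector $\vc{\theta}$ satisfying the three strict inequalities, and the converse is stated as immediate from the geometry. Your proposal defers that verification (``I would then verify, case by case''), so the main equivalence is correctly outlined but not actually carried out.

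There is, however, a concrete error in your ``furthermore'' constructions. In case (\rmn{1}) you take $\vc{\theta}=(1,-\epsilon)$ with $\epsilon\ge 0$ small and claim $\br{\vc{m}^{(1)},\vc{\theta}}<0$. But $\br{\vc{m}^{(1)},\vc{\theta}}=m^{(1)}_{1}-\epsilon\,m^{(1)}_{2}$, and nothing in (\rmn{1}) forces $m^{(1)}_{1}<0$. For instance $\vc{m}=(-1,-1)$, $\vc{m}^{(1)}=(1,2)$ satisfies $\br{\vc{m},\vc{m}^{(1)}_{\bot}}=-1<0$, yet $\br{\vc{m}^{(1)},(1,-\epsilon)}=1-2\epsilon$ is positive for all small $\epsilon$. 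A direction adapted to $\vc{m}^{(1)}$ is needed: the paper takes $\theta_{1}=m^{(1)}_{2}$ and $\theta_{2}=-m^{(1)}_{1}-\epsilon$ when $m^{(1)}_{1}\ge 0$ (and $\theta_{2}=-\epsilon$ when $m^{(1)}_{1}<0$), which does yield $\theta_{2}<0$ together with both inner-product inequalities. The same issue arises symmetrically in your construction for (\rmn{2}): $\vc{\theta}=(-\epsilon,1)$ with small $\epsilon$ need not give $\br{\vc{m}^{(2)},\vc{\theta}}<0$ when $m^{(2)}_{2}>0$; the paper instead (by symmetry with its treatment of (\rmn{3})) uses $\vc{\theta}=(m_{2},\epsilon)$, which has $\theta_{1}=m_{2}<0$. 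So the strategy is sound, but the specific vectors you wrote down must be replaced by ones chosen relative to $\vc{m}^{(k)}$ rather than the coordinate axes.
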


  Throughout the paper, we assume stability, that is, either one of the stability conditions (\rmn{1}), (\rmn{2}) and (\rmn{3}), in addition to conditions (i)--(iv). We are now ready to formally introduce tail asymptotics for the stationary distribution of the double $M/G/1$-type process. We denote this stationary distribution by $\nu$. Let $\vc{L} \equiv (L_{1}, L_{2})$ be the random vector subject to the distribution $\nu$, that is,
\begin{eqnarray*}
  \nu(\vc{i}) = P(\vc{L} = \vc{i}), & \vc{i} \in S
\end{eqnarray*}
We are interested in the tail asymptotic behavior of this distribution. For this, we define the rough and exact asymptotics. We refer to vector $\vc{c} \in \dd{R}^{2}$ as a direction vector if $\| \vc{c} \| = 1$. For an arbitrary direction vector $\vc{c} \ge \vc{0}$, we define $\alpha_{\vc{c}}$ as
\begin{eqnarray*}
  \alpha_{\vc{c}} = - \lim_{x \rightarrow \infty} \frac{1}{x} \log P(\br{\vc{c}, \vc{L}} \geq x),
\end{eqnarray*}
  as long as it exists. This $\alpha_{\vc{c}}$ is referred to as a decay rate in the direction $\vc{c}$. Thus, if the decay rate $\alpha_{\vc{c}}$ exists, $P(\br{\vc{c}, \vc{L}} \geq x)$ is lower and upper bounded by $e^{- (\alpha_{\vc{c}} + \epsilon) x}$ and $e^{- (\alpha_{\vc{c}} - \epsilon) x}$, respectively, for any $\epsilon > 0$ and sufficiently large $x \in \dd{R}$. If there exists a function $f$ and a positive constant $b$ such that $P(\br{\vc{c}, \vc{L}} \geq x) \sim bf(x)$, that is,
\begin{eqnarray}
\label{eqn:exact asymptotic}
\lim_{x \rightarrow \infty} \frac{P(\br{\vc{c}, \vc{L}} \geq x)}{f(x)} = b,
\end{eqnarray}
then $P(\br{\vc{c}, \vc{L}} \geq x)$ is said to have exact asymptotic $f(x)$. In particular, if $f$ is exponential, that is, $f(x) = e^{-\alpha x}$ for some $\alpha > 0$, then it is called an exactly exponential asymptotic. It is notable that random variable $\br{\vc{c}, \vc{L}}$ only takes a countable number of real values. Hence, we must be careful about their periodicity. 

\begin{definition}
\label{dfn:periodicity}
(a) A countable set $A$ of real numbers is said to be $\delta$-arithmetic at infinity for some $\delta > 0$ if $\delta$ is the largest number such that, for some $x_{0} > 0$, $\{x \in A; x \ge x_{0} \} $ is a subset of $\{\delta n; n \in \dd{Z}_{+}\}$. On the other hand, $A$ is said to be asymptotically dense at infinity if there is a positive number $a$ for each $\epsilon > 0$ such that, for all $x \ge a$, $|x - y| < \epsilon$ for some $y \in A$. (b) A random variable $X$ taking countably many real values at most is said to be $\delta$-arithmetic for some integer $\delta > 0$ if $\delta$ is the largest positive number such that $\{x \in \dd{R}; P(X=x) > 0\} \subset \{\delta n; n \in \dd{Z}\}$. 
\end{definition}

 The following fact is an easy consequence of Lemma 2 and Corollary in Section V.4a of \cite{Fell1971}, but we prove it in \app{K c} for completeness.  
 
\begin{lemma}
\label{lem:K c}
  For a directional vector $\vc{c} \ge \vc{0}$, let $K_{\vc{c}} = \{ \br{\vc{c}, \vc{n}}; \vc{n} \in \dd{Z}_{+}^{2}\}$. Then, $K_{\vc{c}}$ is asymptotically dense at infinity if and only if neither $c_{1}$ nor $c_{2}$ vanishes and $c_{1}/c_{2}$ is irrational. Otherwise, $K_{\vc{c}}$ is arithmetic at infinity.
\end{lemma}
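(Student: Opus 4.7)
The plan is to dichotomize on whether $c_1/c_2$ is well-defined and irrational. I will treat the rational case (meaning $c_1 = 0$, $c_2 = 0$, or $c_1/c_2 \in \dd{Q}$) via the Sylvester--Frobenius coin theorem, which pins down the arithmetic spacing $\delta$ exactly, and the irrational case via Kronecker's density theorem together with a short truncation argument to enforce $n_1, n_2 \ge 0$.

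\textbf{Case 1 (arithmetic).} If $c_1 = 0$, then $\|\vc{c}\| = 1$ forces $c_2 = 1$ and $K_{\vc{c}} = \dd{Z}_+$ is clearly $1$-arithmetic at infinity; the case $c_2 = 0$ is symmetric. Otherwise write $c_1/c_2 = p/q$ in lowest terms with positive integers $p, q$ and set $\delta := c_1/p = c_2/q > 0$. Then
\begin{eqnarray*}
 K_{\vc{c}} = \delta \{p n_1 + q n_2 : n_1, n_2 \in \dd{Z}_+\} \subset \delta \dd{Z}_+,
\end{eqnarray*}
and since $\gcd(p,q)=1$ the Sylvester--Frobenius theorem gives $\{p n_1 + q n_2 : n_1, n_2 \in \dd{Z}_+\} \supset \dd{Z} \cap [(p-1)(q-1),\infty)$, so $K_{\vc{c}} \cap [x_0, \infty) = \delta \dd{Z}_+ \cap [x_0, \infty)$ for $x_0 := \delta(p-1)(q-1)$. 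Maximality of $\delta$: any $\delta' > \delta$ would have to accommodate two consecutive points $\delta N, \delta(N+1) \in K_{\vc{c}}$ (available for all sufficiently large $N$) as multiples of $\delta'$, forcing $\delta = (b-a)\delta' \ge \delta'$, a contradiction.

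\textbf{Case 2 (asymptotically dense).} Assume $c_1, c_2 > 0$ with $c_1/c_2 \notin \dd{Q}$. By Kronecker's density theorem, $\{n c_1 \bmod c_2 : n \in \dd{Z}_+\}$ is dense in $[0, c_2)$. Fix $\epsilon > 0$; choose $N = N(\epsilon)$ so that $\{n c_1 \bmod c_2 : 0 \le n \le N\}$ is $\epsilon$-dense in $[0, c_2)$ and set $a := N c_1$. For any $x \ge a$, write $x = M c_2 + r$ with $M \in \dd{Z}_+$ and $r \in [0, c_2)$, then pick $n_1 \in \{0, \ldots, N\}$ and $m \in \dd{Z}_+$ with $n_1 c_1 = m c_2 + r'$, $r' \in [0, c_2)$, $|r - r'| < \epsilon$. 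Since $x \ge N c_1 \ge n_1 c_1$, we have $x - n_1 c_1 = (M - m) c_2 + (r - r') \ge 0$ with $r - r' \in (-c_2, c_2)$, which forces $n_2 := M - m \ge 0$. Consequently $n_1 c_1 + n_2 c_2 \in K_{\vc{c}}$ and $|x - (n_1 c_1 + n_2 c_2)| = |r - r'| < \epsilon$, establishing that $K_{\vc{c}}$ is asymptotically dense at infinity.

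The argument is essentially elementary; the only point requiring care is guaranteeing $n_2 \ge 0$ in Case 2, which is settled by the threshold $a = Nc_1$. I do not anticipate any serious obstacle beyond this bookkeeping.
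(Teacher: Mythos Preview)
Your proof is correct and follows the same overall dichotomy as the paper, but the execution differs in both directions. For the rational case the paper simply declares the conclusion obvious, whereas you invoke the Sylvester--Frobenius theorem to identify the spacing $\delta$ exactly and verify maximality; this is more informative. For the irrational case the paper works bare-hands in the style of Feller: it fixes $\epsilon$, uses pigeonhole on the finite set $A(n)=\{c_1 m_1 - c_2 m_2 \in [0,c_2]: m_2\le n\}$ to find two points within $\epsilon$, and extracts a small positive combination $c_1(m_1-m_1')-c_2(m_2-m_2')$ from which density follows. You instead cite Kronecker's theorem and then carry out essentially the same truncation-and-remainder bookkeeping to force $n_2\ge 0$. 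The pigeonhole route is self-contained; your route is shorter because it outsources the density step to a named theorem. Either way the only delicate point is the nonnegativity of the second coordinate, and your threshold $a=Nc_1$ handles it cleanly.
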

  Because of this lemma, the $x$ in \eqn{exact asymptotic} runs over either arithmetic numbers $\delta n$ for some $\delta > 0$ or real numbers. Particularly, if $K_{\vc{c}}$ is 1-arithmetic at infinity, then we replace $x$ by integer $n$. For example, this $n$ is used for the asymptotics: $P(L_{k} = n, L_{3-k} = i) \sim f(n,i)$ for each fixed $i \in \dd{Z}_{+}$.

\subsection{Moment generating function and stationary equation}
\label{sect:Moment}

  There are two typical ways to represent the stationary distribution of the double $M/G/1$-type process. One is a traditional expression using either generating or moment generating functions. Another is a matrix analytic expression viewing one of coordinates as a background state. In this paper, we will use both of then because they have their own merits. We first consider the stationary equation using moment generating functions. Since the states are vectors of nonnegative integers, it may be questioned why moment generating functions are not used. This question will be answered at the end of this section. 
  
  We denote the moment generating function of the stationary random vector $\vc{L}$ by
\begin{eqnarray*}
\varphi(\vc{\theta}) \equiv E(e^{\br{\vc{\theta}, \vc{L}}}), \qquad \vc{\theta} \in \dd{R}^{2}.
\end{eqnarray*}
  We define a light tail for the stationary distribution $\nu$ according to \cite{Miya2011}.
\begin{definition}
\label{den:light tail}
  The $\nu$ is said to have a light tail in all directions if there is a positive $\vc{\theta} \in \dd{R}^{2}$ such that $\varphi(\vc{\theta}) < \infty$. Otherwise, it is said to have a heavy tail in some direction.
\end{definition}

Define the convergence domain $\sr{D}$ of the moment generating function $\varphi$  as
\begin{eqnarray*}
  \sr{D} = \mbox{the interior of } \{\vc{\theta} \in \dd{R}; \varphi(\vc{\theta}) < \infty \}.
\end{eqnarray*}
Then, we can expect that the tail asymptotic of the stationary distribution is obtained through the boundary of the domain $\sr{D}$. Obviously, $\sr{D}$ is a convex set because $\varphi$ is a convex function on $\dd{R}^{2}$. Let us derive the stationary equation for this $\sr{D}$. Let
\begin{eqnarray*}
 && \varphi_{+}(\vc{\theta}) = E(e^{\br{\vc{\theta}, \vc{L}}} 1(\vc{L} > \vc{0})),\\
 && \varphi_{k}(\theta_{k}) = E(e^{\theta_{k} L_{k}} 1(L_{k} \geq 1, L_{3-k} = 0)), \quad k=1,2,
\end{eqnarray*}
where $1(\cdot)$ is the indicator function. Then, 
\begin{eqnarray*}
  \varphi(\vc{\theta}) = \varphi_{+}(\vc{\theta}) + \varphi_{1}(\theta_{1}) + \varphi_{2}(\theta_{2}) + \varphi_{0}(0),
\end{eqnarray*}
where $\varphi_{0}(0) = P(\vc{L} = \vc{0})$. From this relation and the stationary equation:
\begin{eqnarray}
\label{eqn:stationary equation 0}
  \vc{L} \simeq \vc{L} + \vc{X}^{(+)} 1(\vc{L} \in S_{+}) + \sum_{k \in \{0, 1, 2\}} \vc{X}^{(k)} 1(\vc{L} \in S_{k}),
\end{eqnarray}
  where $\simeq$ stands for the equality in distribution, and the random vectors in the right hand side are assumed to be independent, we have
\begin{eqnarray}
\label{eqn:stationary equation 1}
 && (1 - \gamma(\vc{\theta}))\varphi_{+}(\vc{\theta}) = \sum_{k \in \{1, 2\}} (\gamma_{k}(\vc{\theta}) - 1) \varphi_{k}(\theta_{k}) + (\gamma_{0}(\vc{\theta}) - 1) \varphi_{0}(0),
\end{eqnarray}
 as long as $\varphi(\vc{\theta}) < \infty$. This equation holds at least for $\vc{\theta} \leq \vc{0}$. 
 
  Equation \eqn{stationary equation 1} is equivalent to the stationary equation of the Markov chain $\{\vc{L}(\ell)\}$, and therefore characterizes the stationary distribution. Thus, the stationary distribution can be obtained if we can solve \eqn{stationary equation 1} for unknown function $\varphi$, equivalently, $\varphi_{+}$, $\varphi_{1}$, $\varphi_{2}$ and $\varphi_{0}$. However, this is known as a notoriously hard problem. This is relatively relaxed when jumps are skip free (see, e.g., \cite{FayoIasnMaly1999,KobaMiya2012,Miya2011}). This point is detailed below.
  
\begin{remark}[Kernel method and generating function]
\label{rem:kernel method}
  To get useful information from \eqn{stationary equation 1}, it is a key idea to consider it on the surface obtained from $1 - \gamma(\vc{\theta}) = 0$. This enables us to express $\varphi_{i}(\theta_{i})$ in terms of the other $\varphi_{j}(\theta_{j})$'s under the constraint that $1 - \gamma(\vc{\theta}) = 0$. Then, we may apply analytic extensions for $\varphi_{i}(\theta_{i})$ using complex variables for $\vc{\theta}$. This analytic approach is called a kernel method (see, e.g., \cite{GuilLeeu2011,LiZhao2011,Miya2011}). In the kernel method, generating function is more convenient than moment generating function. Let $\tilde{\gamma}(z_{1}, z_{2}) = \gamma(\log z_{1}, \log z_{2})$, then $\tilde{\gamma}(z_{1}, z_{2})$ is the generating function corresponding to $\gamma(\vc{\theta})$. Note that $z_{1} z_{2} \tilde{\gamma}(z_{1},z_{2})$ is a polynomial of $z_{1}$ and $z_{2}$. Particularly for the skip free two-dimensional reflecting random walk, $z_{1} z_{2} (1 - \tilde{\gamma}(z_{1},z_{2})) = 0$, which corresponds with  $1 - \gamma(\vc{\theta}) = 0$, is a quadratic equation of $z_{i}$ for each fixed $z_{3-i}$. Hence, we can algebraically solve it, which is thoroughly studied in the book of \cite{FayoIasnMaly1999}. However, the problem is getting hard if the random walk is not skip free. If the jumps are unbounded, the equation $1 - \tilde{\gamma}(z_{1},z_{2}) = 0$ has infinitely many solutions in the complex number field for each fixed $z_{2}$ (or $z_{1}$), and there may be no hope to solve the equation. Even if these roots are found, it would be difficult to analytically extend $\varphi_{i}(z_{i})$.
\end{remark}
 
   This remark suggests that the kernel method based on complex analysis is hard to use for the $M/G/1$-type process. We look at the problem in a different way, and consider the equation $1 - \gamma(\vc{\theta}) = 0$ in the real number field. In this case, it has at most two solutions of $\theta_{i}$ for each fixed $\theta_{3-i}$ because $\gamma(\vc{\theta})$ is a two variable convex function. However, we have a problem on the stationary equation \eqn{stationary equation 1} because we only know its validity for $\vc{\theta} \le \vc{0}$. To overcome this difficulty, we will introduce a new tool, called stationary inequality, and work on $\Gamma$ and $\Gamma_{k}$ for $k=1,2$. For this, moment generating functions are more convenient because $\Gamma$ and $\Gamma_{k}$ will be convex. This convexity may not be true for generating functions because two variable generating functions may not be convex.
  
   Although we mainly use moment generating functions, we do not exclude to use generating functions. In fact, they are convenient to consider the tail asymptotics in coordinate directions. For other directions, we again need moment generating function because $c_{1} L_{1} + c_{2} L_{2}$ may not be periodic. Thus, we will use both of them.

\section{Convergence domain and main results}
\label{sect:Convergence}
\setnewcounter

The aim of this section is to present main results on the domain $\sr{D}$ and the tail asymptotics. They will be proved in \sectn{Proofs}. We first give a key tool for finding the domain $\sr{D}$, which allows us to extend the valid region of \eqn{stationary equation 1} from $\{ \vc{\theta} \in \dd{R}^{2}; \vc{\theta} \le \vc{0}\}$.

\begin{lemma}
\label{lem:stationary equation 2}
  For $\vc{\theta} \in \dd{R}^{2}$, $\varphi(\vc{\theta}) < \infty$ and \eqn{stationary equation 1} holds true if either one of the following conditions is satisfied.
\begin{mylist}{0}
\item [(\sect{Double}a)] $\vc{\theta} \in \Gamma$ and $\varphi_{k}(\vc{\theta}) < \infty$ for $k=1,2$.
\item [(\sect{Double}b)] $\vc{\theta} \in \Gamma \cap \Gamma_{1}$ and $\varphi_{2}(\theta_{2}) < \infty$.
\item [(\sect{Double}c)] $\vc{\theta} \in \Gamma \cap \Gamma_{2}$ and $\varphi_{1}(\theta_{1}) < \infty$.
\end{mylist}
\end{lemma}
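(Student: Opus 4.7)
The plan is to apply the stationarity $\nu=\nu P$ to a bounded, compactly supported truncation of the test function $h(\vc{i})=e^{\br{\vc{\theta},\vc{i}}}$ and then pass to the monotone limit. For each $N\in\dd{Z}_+$ I would set $V_N=\{\vc{i}\in S:\max(i_1,i_2)\le N\}$ and $h_N(\vc{i})=h(\vc{i})\,1(\vc{i}\in V_N)$, so that $h_N$ is bounded with finite support and the stationarity identity $\nu(h_N)=\nu(Ph_N)$ is automatically finite. Conditioning on which of the regions $S_+$, $S_0$, $S_1$, $S_2$ the starting state lies in, and using that the increment depends only on that region, this reads
\begin{equation*}
 \nu(h_N)=\sum_{\vc{i}\in S_+}\nu(\vc{i})\,E\bigl(h_N(\vc{i}+\vc{X}^{(+)})\bigr)+\sum_{k=0}^{2}\sum_{\vc{i}\in S_k}\nu(\vc{i})\,E\bigl(h_N(\vc{i}+\vc{X}^{(k)})\bigr).
\end{equation*}

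Letting $N\to\infty$, since $h_N\uparrow h$ pointwise with $h_N\ge 0$, two applications of monotone convergence (first in the increment variable, then over $\vc{i}$) yield the $[0,\infty]$-valued identity
\begin{equation*}
 \varphi(\vc{\theta})=\gamma(\vc{\theta})\varphi_+(\vc{\theta})+\gamma_1(\vc{\theta})\varphi_1(\theta_1)+\gamma_2(\vc{\theta})\varphi_2(\theta_2)+\gamma_0(\vc{\theta})\varphi_0(0),
\end{equation*}
where on each $S_k$ the factor $h(\vc{i})$ collapses to $e^{\theta_k i_k}$ because the other coordinate vanishes. Substituting $\varphi=\varphi_++\varphi_1+\varphi_2+\varphi_0(0)$ on the left and rearranging produces \eqn{stationary equation 1} interpreted in the extended reals.

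I would then upgrade this to a genuine finite equality using the hypotheses. In case (a), $1-\gamma(\vc{\theta})>0$ and $\varphi_1(\theta_1)$, $\varphi_2(\theta_2)$, $\varphi_0(0)$ are all finite; assumption (iii) combined with convexity of $\gamma_k$ also gives $\gamma_k(\vc{\theta})<\infty$ throughout the region of interest, so the right-hand side of \eqn{stationary equation 1} is finite. Hence $(1-\gamma(\vc{\theta}))\varphi_+(\vc{\theta})<\infty$, forcing $\varphi_+(\vc{\theta})<\infty$ and therefore $\varphi(\vc{\theta})<\infty$. In case (b) (respectively (c)) I would regroup the extended-real identity as
\begin{equation*}
 (1-\gamma(\vc{\theta}))\varphi_+(\vc{\theta})+(1-\gamma_k(\vc{\theta}))\varphi_k(\theta_k)=(\gamma_{3-k}(\vc{\theta})-1)\varphi_{3-k}(\theta_{3-k})+(\gamma_0(\vc{\theta})-1)\varphi_0(0)
\end{equation*}
with $k=1$ (respectively $k=2$). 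The coefficients $1-\gamma(\vc{\theta})$ and $1-\gamma_k(\vc{\theta})$ on the left are strictly positive, both summands there are nonnegative, and the right-hand side is finite by hypothesis. Hence each of $\varphi_+$, $\varphi_k$, $\varphi_{3-k}$, $\varphi_0(0)$ is finite and \eqn{stationary equation 1} holds in the usual sense.

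The main obstacle is the limit passage itself: substituting the unbounded $h$ directly into $\nu=\nu P$ would invite $\infty-\infty$ cancellations because $\vc{\theta}$ may have positive components. The monotone truncations $h_N$ keep every intermediate quantity nonnegative and finite, so both sides converge monotonically to possibly infinite limits without ambiguity; the positivity of $1-\gamma(\vc{\theta})$ on $\Gamma$, reinforced by positivity of $1-\gamma_k(\vc{\theta})$ on $\Gamma_k$ in cases (b) and (c), is then precisely what converts the possibly-infinite limit into the finite equation \eqn{stationary equation 1}.
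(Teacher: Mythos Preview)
Your limiting identity
\[
\varphi_+(\vc{\theta}) + \varphi_1(\theta_1) + \varphi_2(\theta_2) + \varphi_0(0) \;=\; \gamma(\vc{\theta})\,\varphi_+(\vc{\theta}) + \gamma_1(\vc{\theta})\,\varphi_1(\theta_1) + \gamma_2(\vc{\theta})\,\varphi_2(\theta_2) + \gamma_0(\vc{\theta})\,\varphi_0(0)
\]
in $[0,\infty]$ is correct, but the ``rearranging'' step that is supposed to produce \eqn{stationary equation 1} is where the argument breaks down. To pass from this identity to $(1-\gamma(\vc{\theta}))\varphi_+(\vc{\theta}) = \cdots$ you must subtract $\gamma(\vc{\theta})\varphi_+(\vc{\theta})$ from both sides, and that is only legitimate when $\gamma(\vc{\theta})\varphi_+(\vc{\theta}) < \infty$, i.e.\ when $\varphi_+(\vc{\theta}) < \infty$ --- precisely the conclusion you are trying to establish. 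If $\varphi_+(\vc{\theta}) = \infty$, both sides of your identity equal $+\infty$ (since $\gamma(\vc{\theta})>0$) and the equation is vacuous; nothing in your argument rules this out. The same circularity appears in cases (b) and (c): the regrouped equation you write down already presupposes that the terms being moved across are finite. So the finiteness assertion, which is the real content of the lemma, is never actually proved.

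The paper's truncation is tailored to avoid this trap. Instead of multiplying $h$ by an indicator, it caps the exponent via $f_n(x)=\min(x,n)$ applied to $\br{\vc{\theta},\vc{L}}$. The pointwise inequality $f_n(x+y)\le f_n(x)+y$ on $\{x\le n\}$ and $f_n(x+y)\le f_n(x)$ on $\{x>n\}$ produces, at each finite $n$, a genuine \emph{inequality} in which every term is finite and the factor $E(e^{\br{\vc{\theta},\vc{X}^{(+)}}})=\gamma(\vc{\theta})$ already appears. One can therefore rearrange to isolate $(1-\gamma(\vc{\theta}))\,E\bigl(e^{f_n(\br{\vc{\theta},\vc{L}})}1(\vc{L}\in S_+,\,\br{\vc{\theta},\vc{L}}\le n)\bigr)$ on the left \emph{before} letting $n\to\infty$. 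Monotone convergence then gives $(1-\gamma(\vc{\theta}))\varphi_+(\vc{\theta})\le$ (something finite), which forces $\varphi_+(\vc{\theta})<\infty$; once that is known, \eqn{stationary equation 1} follows. The essential point is that your indicator truncation destroys the multiplicative structure (since $E(h_N(\vc{i}+\vc{X}))$ does not factor as $h_N(\vc{i})$ times a constant), so the rearrangement cannot be performed while quantities are still finite.
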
  

  This lemma is a version of Lemma 6.1 of \cite{Miya2011}, and proved in \app{stationary inequality}. The lemma suggests that $\Gamma$, $\Gamma \cap \Gamma_{1}$ and $\Gamma \cap \Gamma_{2}$ are important to find for $\varphi(\vc{\theta})$ to be finite. They are not empty by \lem{geometric 1}. Obviously, these sets are convex sets. We will use the following extreme points of them.
\begin{eqnarray*}
 && \vc{\theta}^{(k, \max)} = \arg \max_{(\theta_{1},\theta_{2})} \{ \theta_{k}; \gamma(\theta_{1},\theta_{2}) = 1 \},  \quad \vc{\theta}^{(k, \min)} = \arg \min_{(\theta_{1},\theta_{2})} \{ \theta_{k}; \gamma(\theta_{1},\theta_{2}) = 1 \},\\
 && \vc{\theta}^{(k, \cp)} = \arg \sup_{(\theta_{1},\theta_{2})} \{ \theta_{k}; \vc{\theta} \in \Gamma \cap \Gamma_{k} \},  \qquad \vc{\theta}^{(k, \edge)} = \arg \max_{(\theta_{1},\theta_{2})} \{ \theta_{k}; \vc{\theta} \in \partial \Gamma \cap \partial \Gamma_{k} \},
\end{eqnarray*}
  where c and e in the superscripts stand for convergence parameter and edge, respectively (see \fig{case 1}). Their meanings will be clarified in the context of the Markov additive process $\{\vc{Z}^{(1)}(\ell)\}$. 

  From these definitions, it is easy to see that, for $k=1,2$,
\begin{eqnarray*}
\vc{\theta}^{(k, \cp)} = \left\{ 
\begin{array}{ll}
\vc{\theta}^{(k, \edge)}, \quad & \gamma_{k}(\vc{\theta}^{(k, \max)}) > 1,\\
\vc{\theta}^{(k, \max)}, & \gamma_{k}(\vc{\theta}^{(k, \max)}) \leq 1.
\end{array}
\right.
\end{eqnarray*}
  Using these points, we classify their configurations into three categories as:
\begin{center}
(D1) $\;$ $\theta_{1}^{(2, \cp)} < \theta_{1}^{(1, \cp)}$ and $\theta_{2}^{(1, \cp)} < \theta_{2}^{(2, \cp)}$, \quad(D2) $\vc{\theta}^{(2, \cp)} \le \vc{\theta}^{(1, \cp)}$, \quad
(D3) $\vc{\theta}^{(1, \cp)} \le \vc{\theta}^{(2, \cp)}$,
\end{center}
  where we exclude the case that $\theta_{1}^{(2, \cp)} \ge \theta_{1}^{(1, \cp)}$ and $\theta_{2}^{(1, \cp)} \ge \theta_{2}^{(2, \cp)}$ because it is impossible (see Figures \figt{case 1} and \figt{case 2-3}). These categories have been firstly introduced in \cite{Miya2009} for the double QBD processes, and shown to be useful for the tail asymptotic problems.

\begin{figure}[h]
 	\centering
	\includegraphics[height=4.3cm]{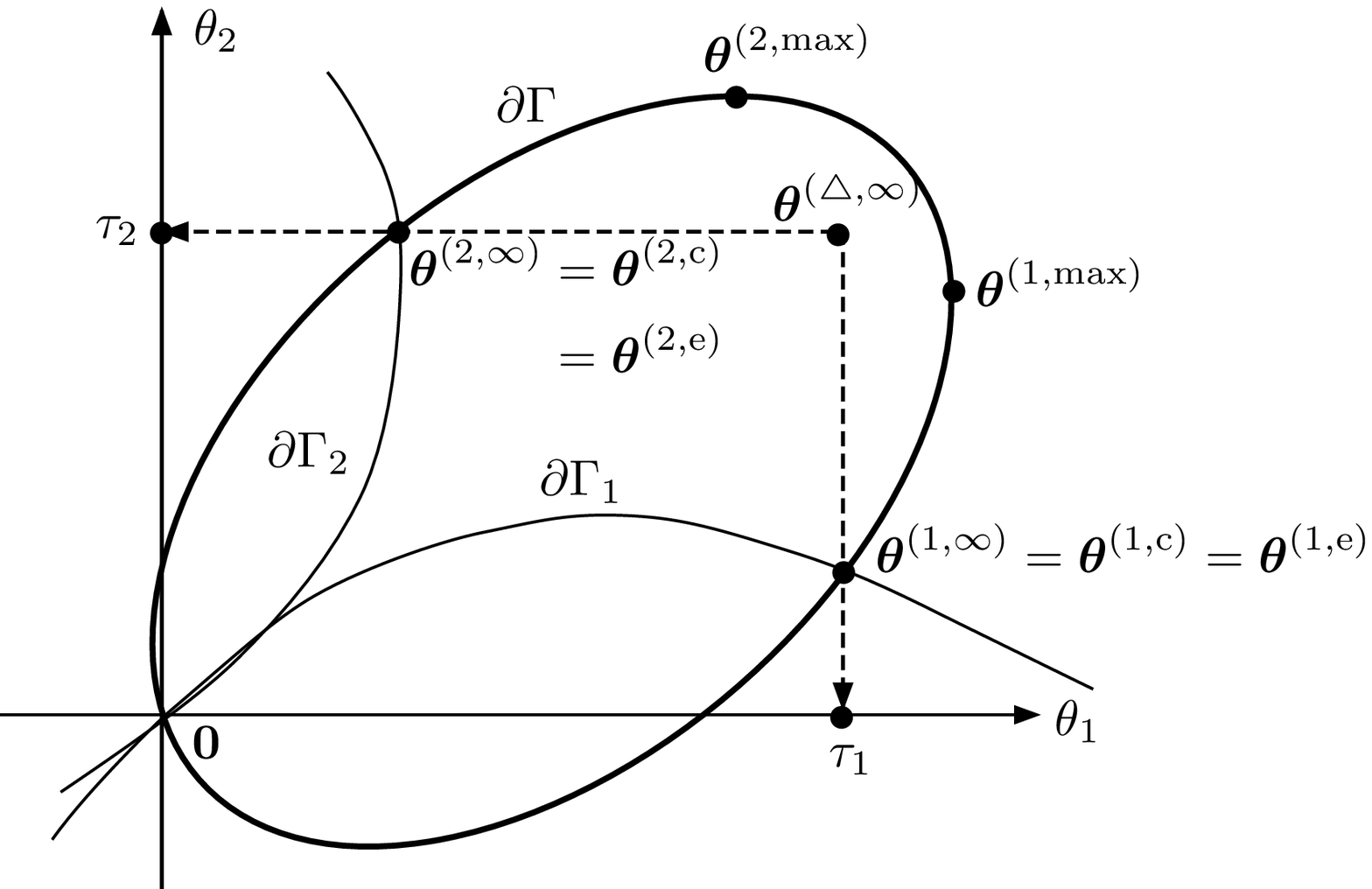} 
	\includegraphics[height=4.3cm]{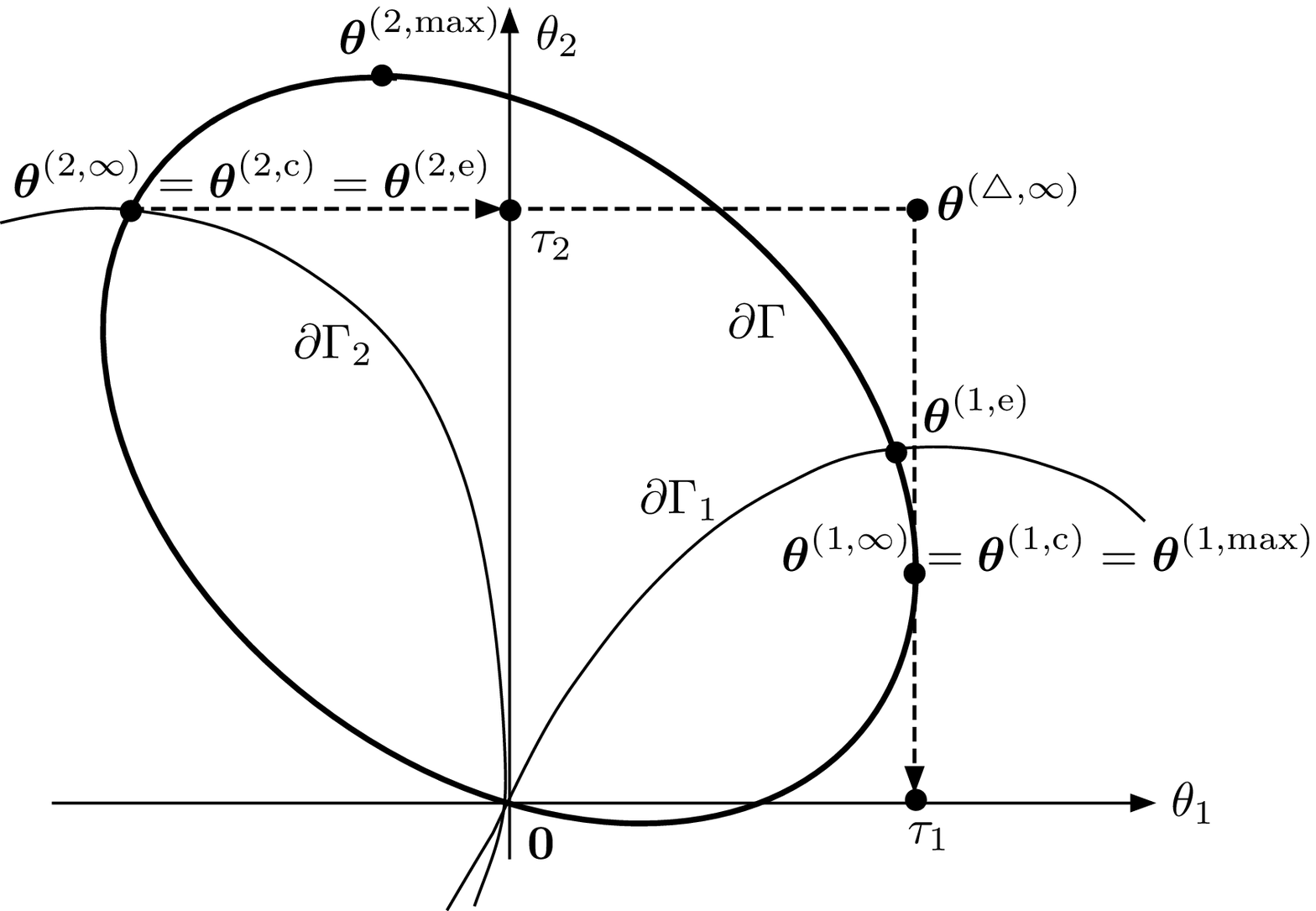}
	\caption{Typical figures for (D1)}
	\label{fig:case 1}
\end{figure}
\begin{figure}[h]
	\centering
	\includegraphics[height=4.3cm]{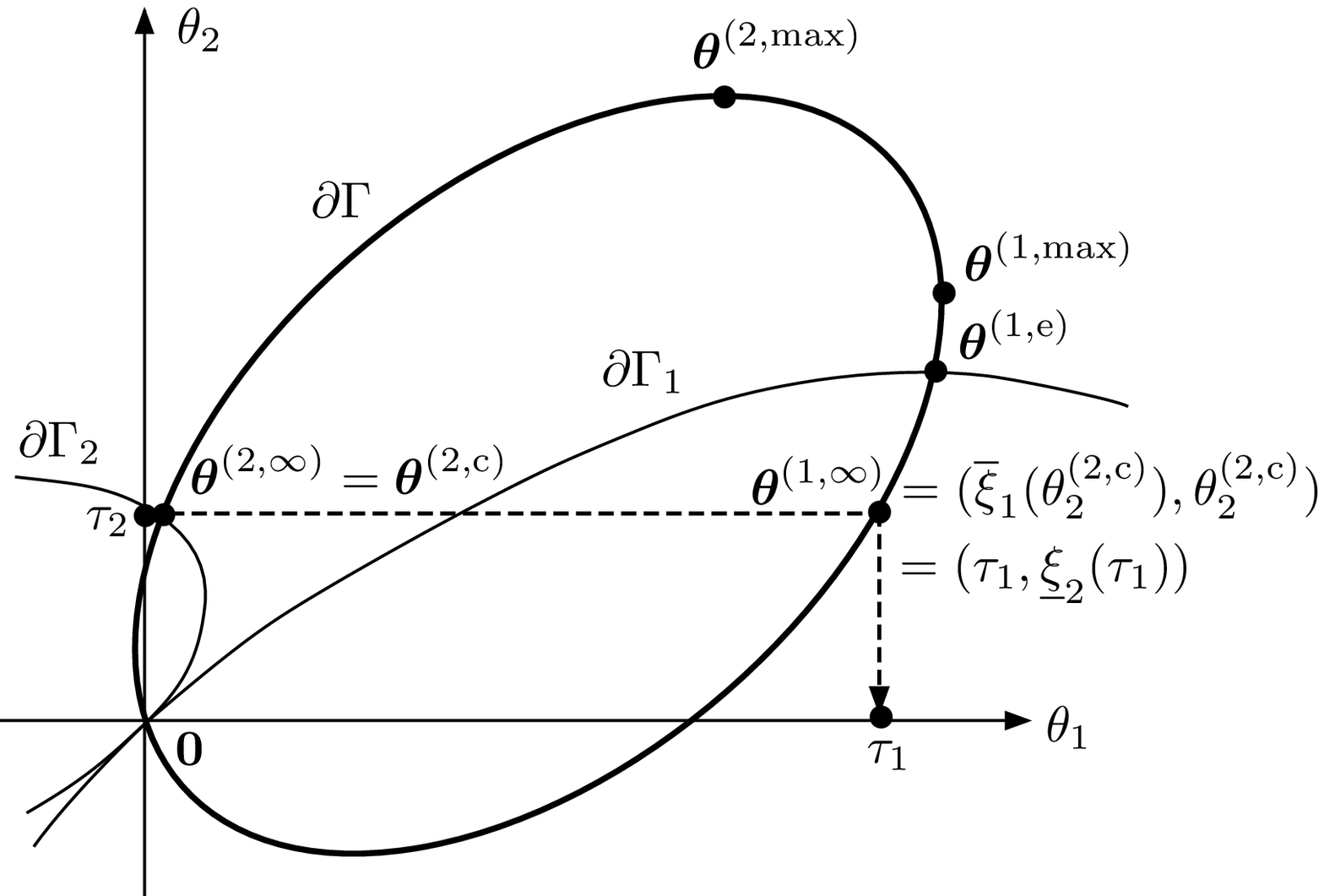}
	\includegraphics[height=4.3cm]{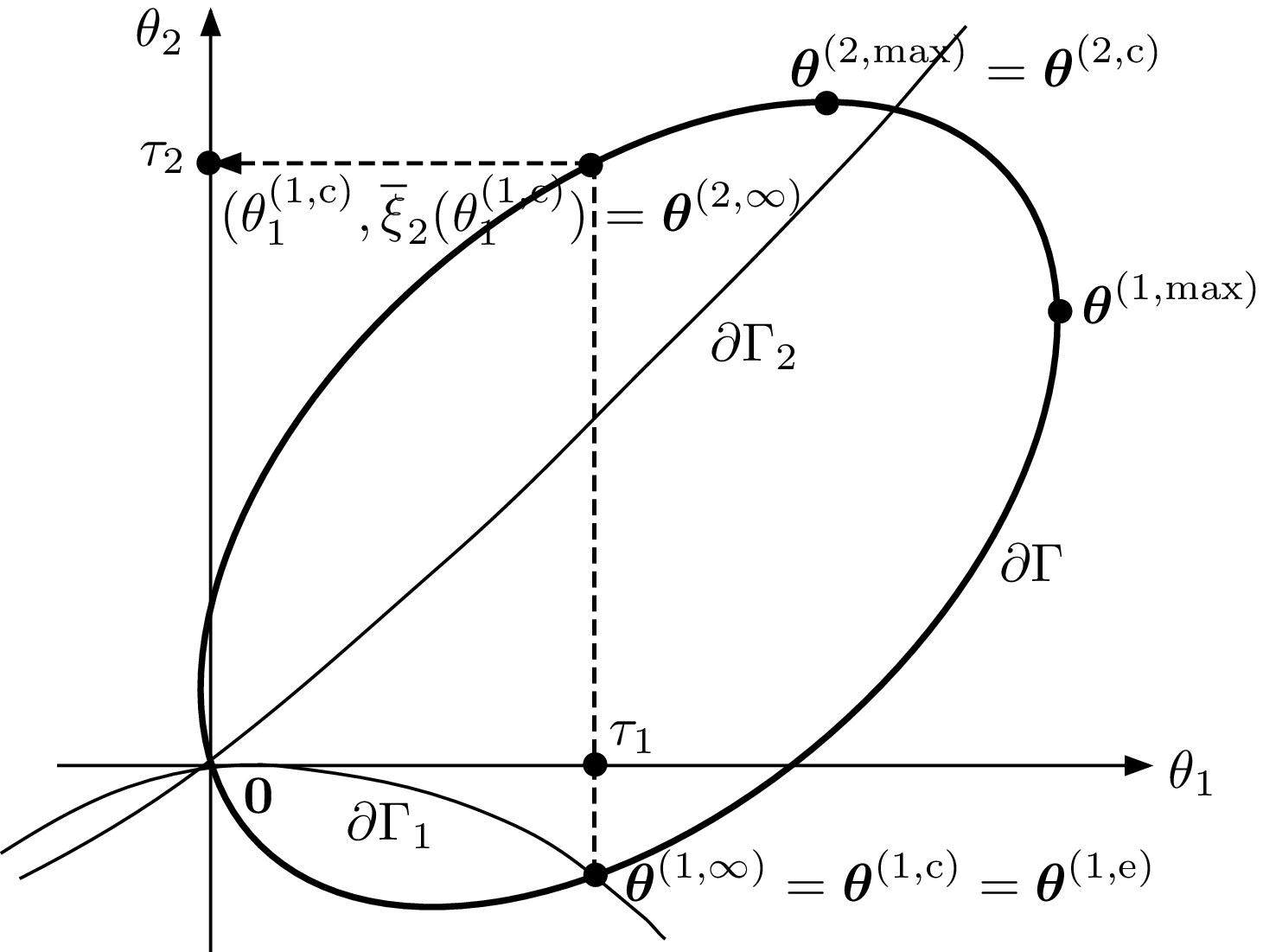}
	\caption{Typical figures for (D2) and (D3)}
	\label{fig:case 2-3}
\end{figure}

  Using this classification, we define vector $\vc{\tau} \equiv (\tau_{1}, \tau_{2})$ as
\begin{eqnarray}
\label{eqn:tau}
  \vc{\tau} = \left \{
\begin{array}{ll}
(\theta_{1}^{(1, \cp)},\theta_{2}^{(2, \cp)})          & \mbox{if (D1) holds},\\
(\ol{\xi}_{1}(\theta_{2}^{(2, \cp)}),\theta_{2}^{(2, \cp)}) & \mbox{if (D2) holds}, \\
(\theta_{1}^{(1, \cp)},\ol{\xi}_{2}(\theta_{1}^{(1, \cp)}))  & \mbox{if (D3) holds}.
\end{array}
\right.
\end{eqnarray}
where $\ol{\xi}_{1}$ and $\ol{\xi}_{2}$ are defined as
\begin{eqnarray*}
  \ol{\xi}_{1}(\theta_{2}) = \max \{ \theta; \gamma(\theta, \theta_{2}) = 1 \}, \qquad \ol{\xi}_{2}(\theta_{1}) = \max \{ \theta; \gamma(\theta_{1}, \theta) = 1 \}.
\end{eqnarray*}
  We will also use the following notation corresponding with them.
\begin{eqnarray*}
  \ul{\xi}_{1}(\theta_{2}) = \min \{ \theta_{1}; \gamma(\theta_{1}, \theta_{2}) = 1 \}, \qquad \ul{\xi}_{2}(\theta_{1}) = \min \{ \theta_{2}; \gamma(\theta_{1}, \theta_{2}) = 1 \}.
\end{eqnarray*}
  
  Then, the domain $\sr{D}$ is obtained as follows.
\begin{theorem}
\label{thr:domain D}
  Under conditions (i)--(iv) and the stability condition, we have
\begin{eqnarray}
\label{eqn:domain D 1}
 \sr{D} = \{\vc{\theta} \in \Gamma_{\max}; \vc{\theta} < \vc{\tau} \},
\end{eqnarray}
  where $\Gamma_{\max} = \{ \vc{\theta} \in \dd{R}^{2}; \exists \vc{\theta}' \in \Gamma, \vc{\theta} < \vc{\theta}' \}$. Furthermore, for $k=1,2$,
\begin{eqnarray}
\label{eqn:domain D 2}
  \tau_{k} = \sup\{ \theta_{k} \ge 0; \varphi_{k}(\theta_{k}) < \infty \}.
\end{eqnarray}
\end{theorem}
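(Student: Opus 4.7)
The plan is to establish \eqn{domain D 1} by two inclusions, obtaining \eqn{domain D 2} as a by-product. Throughout, the strategy exploits the symmetry between $\sr{D}$ and the thresholds of the boundary moment generating functions $\varphi_k$, which are coupled via the stationary equation \eqn{stationary equation 1}. For convenience, let $\hat\tau_k = \sup\{\theta_k \ge 0; \varphi_k(\theta_k) < \infty\}$; the goal is to show $\hat\tau_k = \tau_k$ together with $\sr{D} = \{\vc{\theta} \in \Gamma_{\max}; \vc{\theta} < \vc{\tau}\}$.

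For the inclusion $\sr{D} \supseteq \{\vc{\theta} \in \Gamma_{\max}; \vc{\theta} < \vc{\tau}\}$, I bootstrap from the trivial region $\vc{\theta} \le \vc{0}$ using \lem{stationary equation 2}. The convexity of $\sr{D}$ and the monotonicity $\varphi(\vc{\theta}) \le \varphi(\vc{\theta}')$ for $\vc{\theta} \le \vc{\theta}'$ reduce matters to showing finiteness on $\Gamma \cap \{\vc{\theta} < \vc{\tau}\}$, from which passage to the downward closure $\Gamma_{\max} \cap \{\vc{\theta} < \vc{\tau}\}$ is automatic. By \lem{geometric 1}, stability guarantees that $\Gamma \cap \Gamma_k$ contains vectors with $\theta_k > 0$, so starting from small enough $\theta_k \ge 0$ one has $\varphi_k(\theta_k) < \infty$ by trivial domination. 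Applying \lem{stationary equation 2}(b) on $\Gamma \cap \Gamma_1$ then increases the range on which $\varphi_2$ can be shown finite, and symmetrically \lem{stationary equation 2}(c) increases the range for $\varphi_1$; iterating this bootstrap (the monotonicity of $\varphi_k$ in $\theta_k$ ensures convergence) eventually yields $\hat\tau_k \ge \tau_k$, and then \lem{stationary equation 2}(a) gives $\varphi(\vc{\theta}) < \infty$ on $\Gamma \cap \{\vc{\theta} < \vc{\tau}\}$.

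For the reverse inclusion $\sr{D} \subseteq \{\vc{\theta} \in \Gamma_{\max}; \vc{\theta} < \vc{\tau}\}$, I invoke the large-deviations lower bound of \cite{BoroMogu2001} applied to the present reflecting random walk: for any directional vector $\vc{c} \ge \vc{0}$, $P(\br{\vc{c},\vc{L}} \ge x)$ decays no faster than a rate determined by the geometry of $\Gamma$ and the $\Gamma_k$. In particular, $\varphi(\vc{\theta}) < \infty$ forces $\vc{\theta}$ to lie strictly below some point of $\Gamma$ coordinatewise, i.e., $\vc{\theta} \in \Gamma_{\max}$. Fixing $k$ and applying the lower bound in the coordinate direction $\vc{e}_k$ yields $\theta_k < \hat\tau_k$, and combining this with \eqn{stationary equation 1} on $\partial \Gamma$—where the kernel $1 - \gamma(\vc{\theta})$ vanishes, forcing the boundary terms to balance—shows $\hat\tau_k \le \tau_k$. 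This simultaneously matches $\hat\tau_k = \tau_k$ and completes the inclusion.

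The main obstacle is the last matching step, which must be handled case-by-case through the three configurations (D1), (D2), (D3) of \fig{case 1} and \fig{case 2-3}, because the mechanism by which $\varphi_k$ diverges differs: in (D1) each $\varphi_k$ diverges at $\theta_k^{(k,\cp)}$ driven by its own curve $\partial\Gamma_k$, while in (D2) and (D3) the divergence of one $\varphi_k$ is forced via the other $\varphi_{3-k}$ through the coupling on $\partial\Gamma$. Verifying in each case that the singularity structure of \eqn{stationary equation 1} along $\partial\Gamma$ precisely produces the point $\vc{\tau}$ defined in \eqn{tau}, and no earlier blow-up is possible, is the delicate computation; it will rely on the sharper coordinate-direction large deviations obtained via the Markov additive representation alluded to in the introduction.
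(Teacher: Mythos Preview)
Your proposal is correct and follows essentially the same route as the paper: the forward inclusion is obtained by the iterative bootstrap through \lem{stationary equation 2} (formalized in the paper as \lem{tau}), and the reverse inclusion combines the Borovkov--Mogul'ski{\u\i} large-deviations lower bound (\lem{infinite domain}) for the $\Gamma_{\max}$ constraint with the Markov-additive coordinate-direction bound (\lem{lower bound c}) for the $\theta_k < \tau_k$ constraint, the (D2)/(D3) cases being closed by propagating the divergence of one $\varphi_{3-k}$ to the other $\varphi_k$ through the stationary equation restricted to $\partial\Gamma$. Your identification of the case split and of the coupling mechanism on $\partial\Gamma$ matches the paper's argument precisely.
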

\begin{remark}
\label{rem:domain D}
  This result generalizes Theorem 3.1 of \cite{Miya2009} in two respects. First, the domain is fully identified. Secondly, the skip free condition in \cite{Miya2009} is relaxed.
\end{remark}
 
  We next consider the decay rate of $P( \br{\vc{c}, \vc{L}} \ge x)$ as $x$ goes to infinity. In some cases, we also derive its exact asymptotic. From the domain $\sr{D}$ obtained in \thr{domain D}, we can expect that this decay rate is given by
\begin{eqnarray}
\label{eqn:decay rate c}
  \alpha_{\vc{c}} = \sup \{ x \ge 0; x \vc{c} \in \sr{D} \}.
\end{eqnarray}
  One may consider this to be immediate, claiming that the tail decay rate of a distribution on $[0,\infty)$ is obtained by the convergence parameter of its moment generating function. This claim has been used in the literature, but it is not true (see \app{convergence parameter} for a counter example). Thus, we do need to prove \eqn{decay rate c}.

  We simplify notation $\alpha_{\vcn{e}_{k}}$ as $\alpha_{k}$ for $\vcn{e}_{1} = (1,0)$ and $\vcn{e}_{2} = (0,1)$. Note that
\begin{eqnarray}
\label{eqn:alpha k}
  && \alpha_{1} = \left\{\begin{array}{ll}
  \tau_{1}, \quad & \ol{\xi}_{2}(\tau_{1}) \ge 0,\\
  \beta_{1}, & \ol{\xi}_{2}(\tau_{1}) < 0.
  \end{array} \right.
  \quad
  \alpha_{2} = \left\{\begin{array}{ll}
  \tau_{2}, \quad & \ol{\xi}_{1}(\tau_{2}) \ge 0,\\
  \beta_{2}, & \ol{\xi}_{1}(\tau_{2}) < 0.
  \end{array} \right.
\end{eqnarray}
 where $\beta_{k}$ is the positive solution $x$ of $\gamma(x \vcn{e}_{k}) = 1$ (see \fig{marginal 1} below). We now present asymptotic results in two theorems, which will be proved in the next section.
\begin{figure}[h]
 	\centering
	\includegraphics[height=4.3cm]{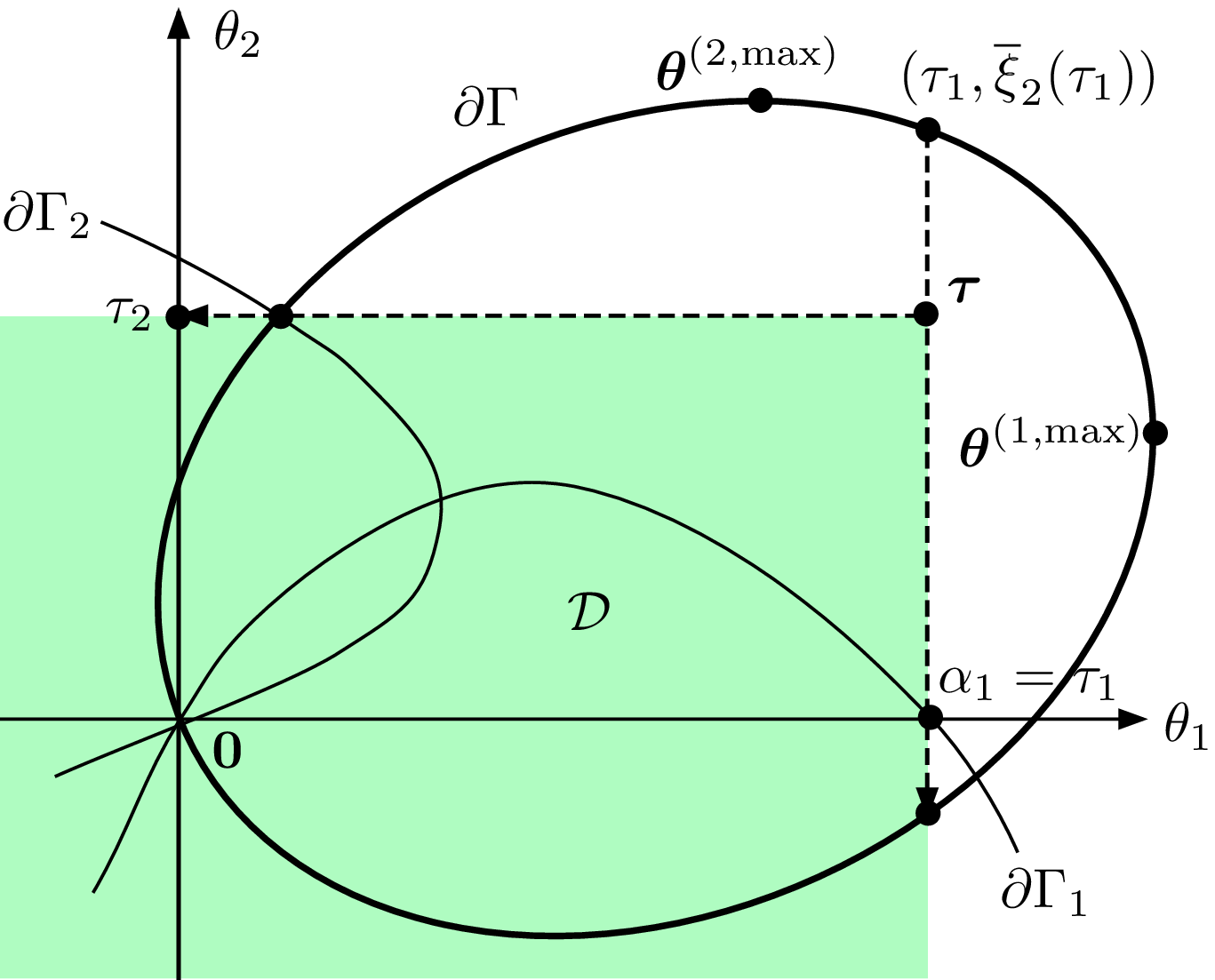} \hspace{2ex}
	\includegraphics[height=4.3cm]{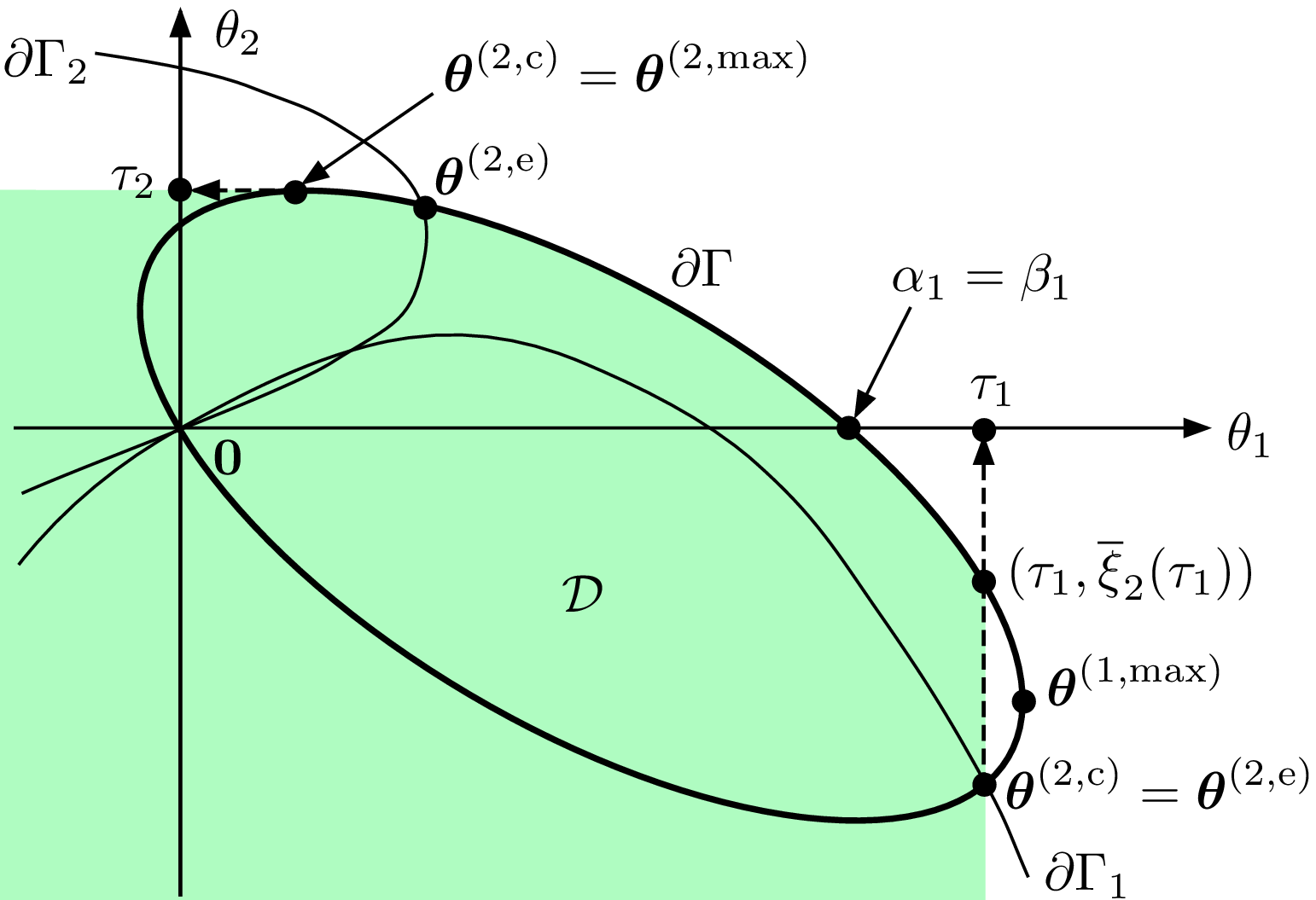}
	\caption{Typical figures for $\alpha_{1} = \tau_{1}$ and $\alpha_{1} = \beta_{1}$ (the colored areas are the domains)}
	\label{fig:marginal 1}
\end{figure}

\begin{theorem}
\label{thr:decay rate 0}
  Under conditions (i)--(iv) and the stability condition, we have,
\begin{eqnarray}
\label{eqn:decay rate 0}
  \lim_{n \to \infty} \frac 1n \log P( L_{k} \ge n, L_{3-k} = i ) = - \tau_{k}, \qquad i \in \dd{Z}_{+}, k=1,2.
\end{eqnarray}
  Furthermore, if $\tau_{k} \ne \theta^{(k, \max)}_{k}$ and if the Markov additive kernel $\{A^{(k)}_{n}; n \ge -1\}$ is 1-arithmetic, then we have the following asymptotics for some constant $b_{ki} > 0$.
\begin{eqnarray}
\label{eqn:exact asymptotic 01}
  \liminf_{n \to \infty} e^{\tau_{k} n} P( L_{k} \ge n, L_{3-k} = i ) \ge b_{ki} , \qquad i \in \dd{Z}_{+}, k=1,2.
\end{eqnarray}
 In particular, for (D1) with $k=1,2$, (D2) with $k=2$ and (D3) with $k=1$, this is refined as
\begin{eqnarray}
\label{eqn:exact asymptotic 02}
  \lim_{n \to \infty} e^{\tau_{k} n} P( L_{k} \ge n, L_{3-k} = i ) = c_{ki} \qquad i \in \dd{Z}_{+} \mbox{ for some $c_{ki} > 0$.}
\end{eqnarray}
\end{theorem}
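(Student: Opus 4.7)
The plan is to handle the three assertions separately, treating the rough logarithmic limit \eqn{decay rate 0} first since it is the foundation on which the exact-asymptotic statements will be built.

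For \eqn{decay rate 0}, I would first extract the upper bound $\limsup \frac{1}{n} \log P(L_k \ge n, L_{3-k}=i) \le -\tau_k$ from \thr{domain D}: any $\theta < \tau_k$ satisfies $\theta \vcn{e}_k \in \sr{D}$, so Markov's inequality applied to the finite $E[e^{\theta L_k}]$ gives $P(L_k \ge n) \le e^{-\theta n}\varphi(\theta\vcn{e}_k)$, and since the joint probability is dominated by the marginal, the same bound passes to every fixed phase $i$. The matching lower bound is the substantive step. I would view $\{(L_k(\ell),L_{3-k}(\ell))\}$ as a Markov additive process with $L_k$ as the additive component and $L_{3-k}$ as the phase, governed by the $M/G/1$-type kernel $\{A^{(k)}_n; n \ge -1\}$ appearing in the statement. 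Following the approach of \cite{MiyaZhao2004}, the convergence parameter of the matrix generating function $A^{(k)}(\theta) = \sum_n e^{\theta n} A^{(k)}_n$ is identified with $\tau_k$ via \eqn{domain D 2}. A Perron--Frobenius-type analysis of the nonnegative kernel $A^{(k)}(\theta)$ on the infinite phase space produces, for each $\theta < \tau_k$, a positive right-subinvariant vector $h^{(k)}_{\theta}$, and the supermartingale $h^{(k)}_{\theta}(L_{3-k}(\ell)) e^{\theta L_k(\ell)}$ combined with a first-passage analysis to level $n$ then yields $P(L_k \ge n, L_{3-k}=i) \ge C_i e^{-\theta n}$ for all sufficiently large $n$.

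For the liminf bound \eqn{exact asymptotic 01}, the extra assumption $\tau_k \neq \theta^{(k,\max)}_k$ ensures that $\tau_k$ lies strictly in the interior of the $\theta_k$-range of $\partial\Gamma$, so that $A^{(k)}(\tau_k)$ has a genuine Perron eigenvalue $1$ with a positive right-invariant vector $h^{(k)}$, not merely subinvariant. The $1$-arithmetic hypothesis removes periodicity. A Markov renewal argument, after the Doob $h^{(k)}$-transform that renders $A^{(k)}(\tau_k)$ stochastic, then produces $b_{ki} > 0$. For \eqn{exact asymptotic 02}, the issue is whether $A^{(k)}(\tau_k)$ admits, in addition to the right-invariant vector, a left-invariant measure of finite mass in a suitable sense; in the three listed sub-cases, the point $\vc{\tau}$ is produced either at the transversal intersection $\partial\Gamma \cap \partial\Gamma_{3-k}$ or as the interior maximum along $\partial\Gamma$, and in both situations the ladder-height Markov chain derived from the $h^{(k)}$-transform is positive recurrent, so the Markov renewal theorem for $M/G/1$-type chains produces the exact limit with $c_{ki} > 0$.

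The main obstacle is the lower bound in \eqn{decay rate 0}: the convergence abscissa relation \eqn{domain D 2} only characterizes $\varphi_k$, which tracks the phase $L_{3-k}=0$, whereas the claim requires matching decay at every phase $i \in \dd{Z}_+$. Constructing the positive right-subinvariant vector $h^{(k)}_{\theta}$ on the infinite phase space, and converting its existence into a pointwise probabilistic lower bound via a supermartingale or change-of-measure, is the technically delicate point; the unboundedness of upward jumps is precisely what forces us to invoke the $M/G/1$-type Markov additive machinery rather than the simpler QBD theory used in \cite{Miya2009}.
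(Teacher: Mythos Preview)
Your upper bound via Markov's inequality and \thr{domain D} is correct and matches the paper. The gap is in the lower bound, and it stems from a misidentification: you claim that the convergence parameter of the kernel $A^{(k)}(\theta)=\sum_n e^{\theta n}A^{(k)}_n$ equals $\tau_k$, but in fact it equals $\theta^{(k,\cp)}_k$. These coincide only in cases (D1) and (D3) for $k=1$ (and symmetrically (D1), (D2) for $k=2$). In the cascade case (D2) for $k=1$ one has $\tau_1=\ol{\xi}_1(\theta^{(2,\cp)}_2)<\theta^{(1,\cp)}_1$, so $A^{(1)}(\tau_1)$ has spectral radius strictly below $1$: there is no right eigenvector at eigenvalue $1$, and your Perron--Frobenius/supermartingale scheme can at best reach $-\theta^{(1,\cp)}_1$, not $-\tau_1$. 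The same error infects your argument for \eqn{exact asymptotic 01}: when $\tau_k\ne\theta^{(k,\max)}_k$ in the cascade case, $A^{(k)}(\tau_k)$ still has Perron eigenvalue strictly less than $1$, so no $h$-transform to a stochastic kernel is available. Also, for \eqn{exact asymptotic 02} the relevant intersection is $\partial\Gamma\cap\partial\Gamma_k$ (the point $\vc{\theta}^{(k,\edge)}$), not $\partial\Gamma\cap\partial\Gamma_{3-k}$.

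The paper's route is quite different in the cascade case. It starts from the occupation-measure inequality $\vc{\nu}^{(1)}_n\ge\vc{\nu}^{(1)}_1 H^{(1)}_{n-1}$ obtained by censoring, and performs a change of measure using the \emph{left} invariant vector $\vc{x}$ of $A^{(1)}_*(e^{\tau_1})$. Under this twist the background chain is \emph{transient}, not positive recurrent, so the standard Markov renewal theorem does not apply. The paper instead proves an ad hoc renewal-type limit $\lim_n\tilde{H}^{(1)}_n\vc{1}=a^{-1}\vc{1}$ by coupling the twisted Markov additive process with a two-dimensional random walk (this relies on the asymptotic $e^{\tau_2 i}x_i\to c_0$, which in turn uses the already-established exact asymptotic for $k=2$). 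The mechanism is exactly the cascade phenomenon: the decay in direction $1$ is inherited from that in direction $2$, not from the spectral structure of $A^{(1)}$. Your proposal has no counterpart to this step.
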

\begin{remark}
\label{rem:decay rate 0a}
  (a) In the case (D1), if $\tau_{k} = \theta^{(k, \max)}_{k}$ and $\vc{\theta}^{(k, \max)} \ne \vc{\theta}^{(k, \edge)}$, then we can show
\begin{eqnarray}
\label{eqn:exact asymptotic 03}
  \lim_{n \to \infty} e^{\tau_{k} n} P( L_{k} \ge n, L_{3-k} = i ) = 0 .
\end{eqnarray}
  This is immediate from \rem{decay rate 1} (b). We conjecture that \eqn{exact asymptotic 03} is also true for $\tau_{k} = \theta^{(k, \max)}_{k}$ and $\vc{\theta}^{(k, \max)} = \vc{\theta}^{(k, \edge)}$. (b) When we apply the kernel method to the skip free case, $\varphi_{k}(z)$ with complex variable $z$ (or the corresponding generating function) has a branch point at $z = \tau_{k}$ for $\tau_{k} = \theta^{(k, \max)}_{k}$ under the case (D1), which is a dominant singular point of $\varphi_{k}(z)$ (see \cite{KobaMiya2012}). Thus, the rightmost point $\vc{\theta}^{(k, \max)}$ corresponds with a branch point of the complex analysis. This is also the reason why \eqn{exact asymptotic 03} holds because $P( L_{k} = n, L_{3-k} = i )$ has the exact asymptotics of the form $n^{c} e^{-\tau_{k}n}$ with $c = -\frac 12, -\frac 32$ as $n \to \infty$.
\end{remark}

\begin{theorem}
\label{thr:decay rate 1}
  Under the same conditions of \thr{decay rate 0}, we have, for any directional vector $\vc{c} \ge \vc{0}$,
\begin{eqnarray}
\label{eqn:Ldecay rate 1}
 \lim_{x \to \infty} \frac 1x \log P( \br{\vc{c}, \vc{L}} \ge x ) = - \alpha_{\vc{c}},
\end{eqnarray}
  where we recall that $\alpha_{\vc{c}} = \sup \{ x \ge 0; x \vc{c} \in \sr{D} \}$. Further assume that $\gamma(\alpha_{\vc{c}} \vc{c}) = 1$, $\gamma_{k}(\alpha_{\vc{c}} \vc{c}) \ne 1$ and $\alpha_{\vc{c}} c_{k} \ne \tau_{k}$ for $k=1,2$. Then, we have, for some positive constant $b_{\vc{c}}$,
\begin{eqnarray}
\label{eqn:exact asymptotic 2}
  \lim_{x \to \infty} e^{\alpha_{\vc{c}}x} P( \br{\vc{c}, \vc{L}} \ge x ) = b_{\vc{c}},
\end{eqnarray}
  where $x$ runs over $\{\delta n; n \in \dd{Z}_{+}\}$ if $\br{\vc{c}, \vc{X}^{(+)}}$ is $\delta$-arithmetic for some $\delta > 0$, while it runs over $\dd{R}_{+}$ otherwise, which is equivalent to $c_{2} \ne 0$ and $c_{1}/c_{2}$ is not rational.
\end{theorem}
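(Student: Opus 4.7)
The theorem breaks into a rough decay rate \eqn{Ldecay rate 1} and an exact asymptotic \eqn{exact asymptotic 2}, and I will handle the upper bound, the lower bound, and the exact refinement in turn. For the upper bound in \eqn{Ldecay rate 1}, pick any $0 \le \alpha < \alpha_{\vc c}$; by convexity of $\sr D$ (which contains $\vc 0$ by assumption (iii)) and \thr{domain D}, $\alpha \vc c \in \sr D$, so $\varphi(\alpha \vc c) < \infty$. Markov's inequality yields $P(\br{\vc c, \vc L} \ge x) \le e^{-\alpha x}\,\varphi(\alpha \vc c)$, and sending $\alpha \uparrow \alpha_{\vc c}$ gives $\limsup_{x \to \infty} x^{-1}\log P(\br{\vc c, \vc L} \ge x) \le -\alpha_{\vc c}$.

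For the matching lower bound I would treat coordinate and off-axis directions separately. When $\vc c = \vcn{e}_k$, \thr{decay rate 0} provides $\liminf_n e^{\tau_k n} P(L_k \ge n, L_{3-k} = 0) > 0$, which gives the bound whenever $\alpha_k = \tau_k$; when $\alpha_k = \beta_k < \tau_k$, the weaker rate $\beta_k$ comes from an interior excursion argument that couples the reflected process with the free walk $\{\vc Y(\ell)\}$ at the Cram\'er root $\gamma(\beta_k \vcn{e}_k) = 1$. For off-axis $\vc c$, the containment
\[
\{\br{\vc c, \vc L} \ge x\} \supset \{L_1 \ge n_1,\, L_2 \ge n_2\} \quad \text{whenever } c_1 n_1 + c_2 n_2 \ge x
\]
reduces the question to a sharp joint lower bound on rectangles. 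Combining the rectangle lower bound of \cite{BoroMogu2001} with the convex-dual rate function coming from $\sr D$ via \thr{domain D}, and optimizing $(n_1, n_2)$ along $c_1 n_1 + c_2 n_2 = x$, produces exactly $\alpha_{\vc c}$ in the exponent.

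For the exact asymptotic, the three regularity hypotheses locate $\alpha_{\vc c}\vc c$ in the relative interior of the face of $\partial \sr D$ facing $\vc c$: on $\partial \Gamma$ by $\gamma(\alpha_{\vc c}\vc c) = 1$, off $\partial \Gamma_k$ by $\gamma_k(\alpha_{\vc c}\vc c) \ne 1$, and off the corner $\vc\tau$ by $\alpha_{\vc c} c_k \ne \tau_k$; convexity of $\gamma$ then forces $\nabla \gamma(\alpha_{\vc c}\vc c) \cdot \vc c > 0$. My plan is to tilt by $\alpha_{\vc c}\vc c$ and evaluate the stationary equation \eqn{stationary equation 1} along the half-line $\{(\alpha_{\vc c} + s)\vc c : s \ge 0\}$; after inverting the factor $1 - \gamma(\vc\theta)$, this yields a renewal equation for $P(\br{\vc c, \vc L} \ge x)$ driven by the boundary generating functions $\varphi_k(\theta_k)$, which remain finite at the tilt precisely because $\gamma_k(\alpha_{\vc c}\vc c) \ne 1$ and $\alpha_{\vc c} c_k \ne \tau_k$. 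Blackwell's renewal theorem then gives \eqn{exact asymptotic 2}: its lattice form when $\br{\vc c, \vc X^{(+)}}$ is $\delta$-arithmetic, and its non-lattice form when $c_2 \ne 0$ and $c_1/c_2$ is irrational by \lem{K c}, with $b_{\vc c}$ given by the standard renewal limit constant.

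The most delicate step is expected to be the joint lower bound for off-axis $\vc c$: \rem{domain D} and \app{convergence parameter} explicitly rule out deducing the tail decay rate from the convergence parameter of $\varphi(\alpha \vc c)$ alone, so a sharp rectangle lower bound for $(L_1, L_2)$ in the presence of unbounded upward jumps has to be produced and then optimized along $c_1 n_1 + c_2 n_2 = x$; this is where a direction-$\vc c$ Markov additive process should shoulder most of the work.
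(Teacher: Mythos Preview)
Your upper bound via Markov's inequality is exactly what the paper does. The real issue is your lower bound for off-axis $\vc{c}$.

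The Borovkov--Mogul'ski\u{\i} rectangle bound (\lem{infinite domain}) gives
\[
\liminf_{x\to\infty}\frac1x\log P\bigl(\vc{L}\in x\vc{d}+\Delta(\vc{a})\bigr)\ \ge\ -\sup\{\br{\vc{\theta},\vc{d}}:\gamma(\vc{\theta})\le 1\},
\]
and the right-hand side is governed by $\Gamma$ alone, not by $\sr D$. Optimizing over rectangles along $c_{1}n_{1}+c_{2}n_{2}=x$ therefore yields at best $-\sup\{u:u\vc{c}\in\Gamma_{\max}\}$, which equals $-\alpha_{\vc c}$ only when the ray $u\vc c$ exits $\sr D$ through $\partial\Gamma_{\max}$ (the paper's case~(c)). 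When the ray first hits the line $\theta_{k}=\tau_{k}$ (cases (a) and (b)), your rectangle infimum is strictly larger than $\alpha_{\vc c}$, and no convex-dual manipulation of $\sr D$ rescues this, because $\sr D$ enters only through the \emph{upper} bound. The paper sidesteps this entirely: in case~(a) it uses the elementary inclusion
\[
\Bigl\{L_{1}>\tfrac{x}{c_{1}},\,L_{2}=0\Bigr\}\ \subset\ \{\br{\vc{c},\vc{L}}>x\},
\]
together with \eqn{lower bound 5} (decay rate $\tau_{1}$ for the \emph{boundary} marginal $P(L_{1}\ge n,\,L_{2}=0)$ from \thr{decay rate 0}), which immediately gives $-\tau_{1}/c_{1}=-\alpha_{\vc c}$. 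No direction-$\vc c$ Markov additive process is needed; the coordinate one already does the job, and the step you flag as ``most delicate'' disappears.

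For the exact asymptotic your tilted-renewal sketch is plausible in spirit but differs from what the paper actually does. The paper works analytically: it rewrites \eqn{stationary equation 1} as \eqn{analytic zc}, observes that under the hypotheses $\gamma_{k}(\alpha_{\vc c}\vc c)\ne 1$ and $\alpha_{\vc c}c_{k}\ne\tau_{k}$ the right-hand side is analytic strictly past $\Re z=\alpha_{\vc c}$, and shows that $1-\gamma(z\vc c)$ has a \emph{simple} zero at $z=\alpha_{\vc c}$ and no other zero on that vertical line (or circle, in the arithmetic case). Then it invokes standard singularity-analysis inversion (Theorem~VI.5 of \cite{FlajSedg2009} in the $\delta$-arithmetic case, the Doetsch-type Lemma~6.2 of \cite{DaiMiya2013} in the non-arithmetic case) rather than Blackwell's theorem. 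Your renewal route would have to manufacture a genuine renewal equation for the tail $P(\br{\vc c,\vc L}\ge x)$ out of \eqn{stationary equation 1}, and you have not said how; the paper's pole-extraction argument is both shorter and complete as stated.
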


\begin{remark}
\label{rem:decay rate 1}
  (a) Since $\alpha_{k}$ may be less than $\tau_{k}$, the decay rate of $P( L_{k} \ge n )$ may be different from that of $P( L_{k} \ge n, L_{3-k} = 0 )$.
  (b) Similar to \rem{decay rate 0a},  if $\alpha_{k} = \theta^{(k, \max)}_{k}$ and $\vc{\theta}^{(k, \max)} \ne \vc{\theta}^{(k, \edge)}$, then we can show that $\varphi(\vc{\theta}^{(k, \max)}) < \infty$ using the expression \eqn{analytic zc} in the proof of this theorem. This implies that
\begin{eqnarray}
\label{eqn:exact asymptotic 23}
  \lim_{n \to \infty} e^{\alpha_{k}n} P( L_{k} \ge n ) = 0 .
\end{eqnarray}
  This obviously implies \eqn{exact asymptotic 03}. (c)
  If the jumps are skip free, then finer exact asymptotics are obtained for coordinate directions in \cite{KobaMiya2012}, which partially uses the kernel method discussed in \sectn{Moment}. One may think to apply the same approach as in \cite{KobaMiya2012}. Unfortunately, this approach does not work because of the same reason discussed in \rem{kernel method}.
\end{remark}

We now consider how the decay rates $\tau_{k}$ and $\alpha_{\vc{c}}$ are influenced by the modeling primitives. Obviously, if $\gamma(\vc{\theta})$, $\gamma_{1}(\vc{\theta})$ and $\gamma_{2}(\vc{\theta})$ are increased by changing the modeling primitives, then the open sets $\Gamma$, $\Gamma_{1}$ and $\Gamma_{2}$ are diminished. Hence, we have

\begin{lemma}
\label{lem:monotonicity 1}
  Under the assumptions of \thr{decay rate 0}, if the distributions of $\vc{X}^{(+)}$, $\vc{X}^{(1)}$ and $\vc{X}^{(2)}$ are changed to increase $\gamma(\vc{\theta})$, $\gamma_{1}(\vc{\theta})$ and $\gamma_{2}(\vc{\theta})$ for each fixed $\vc{\theta} \in \dd{R}^{2}$, then the decay rates $\tau_{k}$ and $\alpha_{\vc{c}}$ are decreased.
\end{lemma}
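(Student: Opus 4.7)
The plan is to show via \thr{domain D} that the convergence domain $\sr D$ shrinks weakly under the perturbation, and then read off both monotonicities from \eqn{decay rate c} and the identity $\tau_k = \sup\{\theta_k : \vc\theta \in \sr D\}$.

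Write $\tilde{(\cdot)}$ for quantities built from the perturbed primitives. Since $\tilde\gamma \ge \gamma$ and $\tilde\gamma_k \ge \gamma_k$ pointwise on $\dd R^2$, we immediately obtain
\[
  \tilde\Gamma \subseteq \Gamma, \qquad \tilde\Gamma_k \subseteq \Gamma_k, \qquad \tilde\Gamma_{\max} \subseteq \Gamma_{\max},
\]
and $\tilde{\ol\xi}_k(\theta_{3-k}) \le \ol\xi_k(\theta_{3-k})$ at every height where both are defined. The scalar extremes appearing in \eqn{tau} are suprema of $\theta_k$ over $\Gamma$, $\Gamma \cap \Gamma_k$, and $\partial\Gamma \cap \partial\Gamma_k$ respectively, and hence also weakly decrease: $\tilde\theta_k^{(k,\max)} \le \theta_k^{(k,\max)}$, $\tilde\theta_k^{(k,\cp)} \le \theta_k^{(k,\cp)}$, $\tilde\theta_k^{(k,\edge)} \le \theta_k^{(k,\edge)}$.

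Next I would observe that \eqn{domain D 1} implies $\tau_k \le \theta_k^{(k,\max)}$ and the uniform identity
\[
  \tau_k = \sup\{\theta_k : \vc\theta \in \sr D\}, \qquad k=1,2,
\]
since for any $\theta_k^{\ast} < \tau_k$ one can take $\theta_{3-k}$ sufficiently negative so that $(\theta_k^{\ast},\theta_{3-k})$ lies in $\Gamma_{\max}$ (dominated by a point near $\vc\theta^{(k,\max)}$ inside $\Gamma$) and trivially satisfies $\vc\theta < \vc\tau$. Consequently it suffices to show $\tilde{\sr D} \subseteq \sr D$; combined with $\tilde\Gamma_{\max} \subseteq \Gamma_{\max}$, this reduces by \eqn{domain D 1} to the componentwise inequality $\tilde{\vc\tau} \le \vc\tau$. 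The latter is then checked by inspecting \eqn{tau} across the (up to nine) combinations of regimes (D1)/(D2)/(D3) for the original and perturbed models; in each combination the inequality reduces to the componentwise monotonicity of $\theta_k^{(k,\cp)}$ and of $\ol\xi_k$ established above together with the defining inequalities of the regime, with the concavity/unimodality of $\ol\xi_k$ (inherited from convexity of $\gamma$) controlling the cases where the regime switches under the perturbation.

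Given $\tilde{\sr D} \subseteq \sr D$, the lemma follows: \eqn{decay rate c} yields $\tilde\alpha_{\vc c} = \sup\{x \ge 0 : x\vc c \in \tilde{\sr D}\} \le \alpha_{\vc c}$, and the uniform identity applied to both models gives $\tilde\tau_k \le \tau_k$. The main obstacle I anticipate is the case analysis for $\tilde{\vc\tau} \le \vc\tau$, especially the mixed-regime combinations where the binding extremum changes between the original and perturbed models; once this routine but tedious step is dispatched, all other links in the chain are immediate consequences of the pointwise MGF inequality.
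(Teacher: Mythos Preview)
Your approach is correct and shares the paper's core observation: increasing $\gamma$, $\gamma_1$, $\gamma_2$ shrinks $\Gamma$, $\Gamma_1$, $\Gamma_2$, hence shrinks $\sr{D}$ and decreases the decay rates. The paper in fact gives no proof beyond this one-sentence remark, treating the lemma as immediate from the geometric description; your write-up is considerably more explicit.

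Where you diverge is in the mechanism for proving $\tilde{\vc{\tau}} \le \vc{\tau}$. Your proposed nine-way case analysis over the (D1)/(D2)/(D3) regimes of the original and perturbed models is workable but unnecessarily tedious, and the mixed-regime cases you flag as ``the main obstacle'' are genuinely fiddly (for instance, in a (D2)$\to$(D2) comparison one needs $\tilde{\ol\xi}_1(\tilde{\tau}_2) \le \ol\xi_1(\tau_2)$ with $\tilde{\tau}_2 \le \tau_2$, and $\ol\xi_1$ is not monotone). A much cleaner route, implicit in the paper's viewpoint, is to use the iteration of \sectn{Proofs}.2 and \lem{tau}: each $\theta_k^{(\triangle,n)}$ is a supremum of $\theta_k$ over a set of the form $\{\vc{\theta} \in \Gamma \cap \Gamma_k : \theta_{3-k} \le \theta_{3-k}^{(\triangle,n-1)}\}$, so $\tilde{\theta}_k^{(\triangle,n)} \le \theta_k^{(\triangle,n)}$ follows by induction directly from $\tilde{\Gamma} \cap \tilde{\Gamma}_k \subseteq \Gamma \cap \Gamma_k$. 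Passing to the limit gives $\tilde{\vc{\tau}} \le \vc{\tau}$ with no case distinction whatsoever. One minor point: your claim $\tilde{\theta}_k^{(k,\edge)} \le \theta_k^{(k,\edge)}$ is not obviously true from set inclusion alone (it concerns an intersection of \emph{boundaries}), but you never actually need it, since only $\theta_k^{(k,\cp)}$ enters the argument.
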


Here, decreasing and increasing are used in the weaker sense. This convention will be used throughout the paper. To materialize the monotone property in \lem{monotonicity 1}, we use the following stochastic order for random vectors.

\begin{definition}[M\"{u}ller and Stoyan \cite{MullStoy2002}]
\label{dfn:linear cx}
    (a) For random variables $X$ and $Y$, the distribution of $X$ is said to be less than the distribution of $Y$ in convex order, which is denoted by $X \le_{cx} Y$, if, for any convex function $f$ from $\dd{R}$ to $\dd{R}$,
\begin{eqnarray*}
  E( f(X)) \le E( f(Y)),
\end{eqnarray*}
as long as the expectations exist. (b) For two dimensional random vectors $\vc{X}$ and $\vc{Y}$, if $\br{\vc{\theta},\vc{X}} \le_{cv} \br{\vc{\theta},\vc{Y}}$ for all $\vc{\theta} \in \dd{R}^{2}$, then the distribution of $\vc{X}$ is said to be less than the distribution of $\vc{Y}$ in linear convex order, which is denoted by $\vc{X} \le_{lcx} \vc{Y}$.
\end{definition}

For  linear convex order, Koshevoy and Mosler \cite{KoshMosl1998} give several conditions to be equivalent (see also \cite{Scar1998}). Among them, the Strassen's characterization for convex order visualizes the variability of this order (see Theorem 2.6.6 of \cite{MullStoy2002}). That is, $\vc{X} \le_{lcx} \vc{Y}$ if and only if there is a random variable $U_{\vc{\theta}}$ for each $\vc{\theta} \in \dd{R}^{2}$ such that $E(U_{\vc{\theta}}|\br{\vc{\theta},\vc{X}}) = 0$ and
\begin{eqnarray}
\label{eqn:linear cx}
  \br{\vc{\theta},\vc{Y}} \simeq \br{\vc{\theta},\vc{X}} + U_{\vc{\theta}},
\end{eqnarray}
where $\simeq$ denotes the equality in distribution.

Since $e^{x}$ is a convex function, the following fact is immediate from Theorems \thrt{decay rate 0} and \thrt{decay rate 1} and \lem{monotonicity 1}.

\begin{corollary}
\label{cor:monotonicity 2}
  If the distributions of $\vc{X}^{(+)}$, $\vc{X}^{(1)}$ and $\vc{X}^{(2)}$ are increased in linear convex order, then the decay rate $\tau_{k}$ and $\alpha_{\vc{c}}$ are decreased for $k=1,2$ and any direction $\vc{c} \ge \vc{0}$, where the stability condition is unchanged due to \eqn{linear cx}.
\end{corollary}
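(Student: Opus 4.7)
The plan is to reduce the claim to \lem{monotonicity 1} by verifying its pointwise-monotonicity hypothesis on the moment generating functions, and, separately, to show that the stability conditions of \lem{stability d=2} are preserved under the perturbation. Both verifications are one-line consequences of the definition of $\le_{lcx}$, which is why the corollary is stated as immediate.

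For the pointwise monotonicity of $\gamma, \gamma_{1}, \gamma_{2}$, I use that $f(t) = e^{t}$ is convex on $\dd{R}$. Applying the defining property of $\le_{lcx}$ to each pair of jump distributions yields, for every $\vc{\theta} \in \dd{R}^{2}$,
\begin{eqnarray*}
  E\bigl(e^{\br{\vc{\theta}, \vc{X}^{(+)}}}\bigr) \le E\bigl(e^{\br{\vc{\theta}, \vc{Y}^{(+)}}}\bigr),
\end{eqnarray*}
and the analogous inequality holds with $\vc{X}^{(k)}, \vc{Y}^{(k)}$ in place of $\vc{X}^{(+)}, \vc{Y}^{(+)}$ for $k=1,2$. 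Hence $\gamma, \gamma_{1}, \gamma_{2}$ all increase pointwise on $\dd{R}^{2}$ when the jumps are replaced by their $\le_{lcx}$-larger versions, which is precisely the hypothesis of \lem{monotonicity 1}.

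For the preservation of means and hence of stability, the Strassen-type representation \eqn{linear cx} gives $\br{\vc{\theta}, \vc{Y}^{(+)}} \simeq \br{\vc{\theta}, \vc{X}^{(+)}} + U_{\vc{\theta}}$ with $E(U_{\vc{\theta}} \mid \br{\vc{\theta}, \vc{X}^{(+)}}) = 0$; taking unconditional expectations yields $E(\br{\vc{\theta}, \vc{X}^{(+)}}) = E(\br{\vc{\theta}, \vc{Y}^{(+)}})$ for every $\vc{\theta}$, so $E(\vc{X}^{(+)}) = E(\vc{Y}^{(+)})$, and the same argument applied on the two boundary faces yields $E(\vc{X}^{(k)}) = E(\vc{Y}^{(k)})$ for $k=1,2$. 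Consequently the mean vectors $\vc{m}, \vc{m}^{(1)}, \vc{m}^{(2)}$ are unchanged, and each of the three stability conditions of \lem{stability d=2}, which involve only these means, remains valid verbatim for the perturbed walk.

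Combining the two steps, the perturbed walk still satisfies (i)--(iv) together with the stability condition, so \thrq{decay rate 0}{decay rate 1} apply to it and \lem{monotonicity 1} delivers the asserted decrease of $\tau_{k}$ and $\alpha_{\vc{c}}$. I do not anticipate any real obstacle: the only point to notice is that $\le_{lcx}$ simultaneously bounds MGFs from above (via the convex test function $e^{t}$) and pins the means (via the convex pair $\pm t$, or equivalently the representation \eqn{linear cx}); once this is observed, there is nothing further to prove.
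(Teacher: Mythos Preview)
Your proof is correct and follows the paper's approach exactly: the paper states the corollary as immediate from Theorems~\thrt{decay rate 0}, \thrt{decay rate 1} and \lem{monotonicity 1} via the convexity of $e^{x}$, and handles the stability clause by a bare citation of \eqn{linear cx}. Your write-up simply spells out these two one-line observations in full.
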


This lemma shows how the decay rates are degraded by increasing the variability of the transition jumps $\vc{X}^{(+)}$, $\vc{X}^{(1)}$ and $\vc{X}^{(2)}$.

\section{Proofs of the theorems}
\label{sect:Proofs}
\setnewcounter

The aim of this section is to prove Theorems \thrt{domain D}, \thrt{decay rate 0} and \thrt{decay rate 1}. Before their proofs, we prepare two sets of auxiliary results. One is another representation of the stationary distribution using a Markov additive process. Another is an iteration algorithm for deriving the convergence domain $\sr{D}$.

\subsection{Occupation measure and Markov additive process}
\label{sect:Prelimiaries}

  We first represent the stationary distribution using a so called censoring. That is, the stationary probability of each state is computed by the expected number of visiting times to that state between the returning times to the boundary or its face. The set of these expected numbers is called an occupation measure. We formally introduce them.
  
  Let $U$ be a subset of $S$, and let $\sigma^{U} = \inf\{ \ell \ge 1; \vc{L}(\ell) \in U \}$. For this $U$, define the distribution $g^{U}$ of the first return state and the occupation measure $h^{U}$ as
\begin{eqnarray*}
 && g^{U}(\vc{i}, \vc{j}) = P\left(\left. \vc{L}(\sigma^{U}) = \vc{j}, \sigma^{U} < \infty \right| \vc{L}(0) = \vc{i} \right), \qquad \vc{i}, \vc{j} \in U,\\
 && h^{U}(\vc{i}, \vc{j}) = E\left(\left. 1(\sigma^{U} < \infty) \sum_{\ell=0}^{\sigma^{U}-1} 1(\vc{L}(\ell) = \vc{j}) \right| \vc{L}(0) = \vc{i} \right), \qquad \vc{i}, \vc{j} \in S \setminus U.
\end{eqnarray*}
  Let $G^{U}$ and $H^{U}$ be the matrices whose $(\vc{i}, \vc{j})$ entries are $g^{U}(\vc{i}, \vc{j})$ and $h^{U}(\vc{i}, \vc{j})$, respectively. $H^{U}$ may be also considered as a potential kernel (see, e.g., Section 2.1 of \cite{Numm1984} for this kernel). Note that $G^{U}$ is stochastic since $\{\vc{L}(\ell)\}$ has the stationary distribution $\nu$. Let $\nu^{U}$ be the stationary distribution of $G^{U}$, which is uniquely determined up to a constant multiplier by $\nu^{U} G^{U} = \nu^{U}$. By $E_{\nu^{U}}(\sigma^{U})$, we denote the expectation of $\sigma^{U}$ with respect to $\nu^{U}$. By the existence of the stationary distribution, $E_{\nu^{U}}(\sigma^{U})$ is finite. Then, as discussed in Section 2 of \cite{MiyaZwar2012}, it follows from censoring on the set $U$ that
\begin{eqnarray}
\label{eqn:occupation measure 1}
 && \nu(\vc{j}) = \frac 1{E_{\nu^{U}}(\sigma^{U})} \sum_{\vc{i} \in U} \nu^{U}(\vc{i}) \sum_{\vc{k} \in S \setminus U} p(\vc{i}, \vc{k}) h^{U}(\vc{k}, \vc{j}) , \qquad \vc{j} \in S \setminus U,
\end{eqnarray}
where $p(\vc{i}, \vc{k}) = P \left.\left( \vc{L}(1) = \vc{k} \right| \vc{L}(0) = \vc{i} \right)$.

  We here need the distribution $\nu^{U}$ because $U$ may not be singleton. The censored process is particularly useful when the occupation measure is simple. For example, if we choose $\partial S$ for $U$, then $h^{U}(\vc{i}, \vc{j})$ is obtained from the random walk $\{\vc{Y}(\ell)\}$, which is simpler than $\{\vc{L}(\ell)\}$.
  
  We next choose $U = S_{0} \cup S_{2}$ for $H^{U} \equiv \{h^{U}(\vc{i}, \vc{j}); \vc{i}, \vc{j} \in S \setminus U\}$. This occupation measure has been used to find tail asymptotics of the marginal stationary distribution (see, e.g., \cite{KobaMiyaZhao2010,Miya2009,MiyaZhao2004}). For each $m, n \ge \ell \ge 1$, we denote the $\dd{Z}_{+} \times \dd{Z}_{+}$ matrix whose $(i,j)$ entry is $h^{U_{\ell}}((m,i), (n,j))$ by $H^{(1,\ell)}_{mn}$, where $U_{\ell} = \{0,1,\ldots, \ell-1\} \times \dd{Z}_{+}$. Since this $H^{(1,m)}_{m(m+n)}$ does not depend on $m \ge \ell$, we simply denote it by $H^{(1)}_{n}$ for $n \ge 0$, where $H^{(1)}_{0}$ is the identity matrix.
  
  We need further notation. For nonnegative integers $m, n \ge 0$, let
\begin{eqnarray}
\label{eqn:R matrix 1}
  \lefteqn{r^{(1)}((m,i),(m+n,j))} \nonumber\\
  && = \sum_{\ell=1}^{\infty} P\big( \vc{L}(\ell) = (m+n,j), \nonumber\\
  && \hspace{12ex} m < L_{1}(\ell) \le \min(L_{1}(1), \ldots, L_{1}(\ell-1)) | \vc{L}(0) = (m,i) \big). 
\end{eqnarray}
  For $m \ge 1$, $r^{(1)}((m,i),(m+n,j))$ does not depend on $m$, so, for each $n \ge 1$, we denote the matrix whose $(i,j)$ entry is $r^{(1)}((m,i),(m+n,j))$ by $R^{(1)}_{n}$ for $n \ge 1$. On the other hand, for $m=0$, we denote the corresponding matrix by $R^{(1)}_{0n}$ for $n \ge 1$.
  
  For each $n \ge 0$, let $\vc{\nu}_{n}^{(1)}$ be the vector whose $i$-th entry is $\nu(n,i)$. Similar to \eqn{occupation measure 1}, it follows from censoring with respect to $U = U_{n}$ that
\begin{eqnarray}
\label{eqn:renewal equation 1}
  \vc{\nu}^{(1)}_{n} = \vc{\nu}^{(1)}_{0} R^{(1)}_{0n} + \sum_{\ell=1}^{n-1} \vc{\nu}^{(1)}_{\ell} R^{(1)}_{n-\ell}, \qquad n \ge 1.
\end{eqnarray}
  Using the well known identity that
\begin{eqnarray*}
  \sum_{n=0}^{\infty} s^{n} H^{(1)}_{n} = \left( I - \sum_{n=1}^{\infty} s^{n} R^{(1)}_{n} \right)^{-1},
\end{eqnarray*}
  \eqn{renewal equation 1} can be written as
\begin{eqnarray}
\label{eqn:renewal equation 2}
  \vc{\nu}^{(1)}_{n} = \vc{\nu}^{(1)}_{0} \sum_{\ell=1}^{n} R^{(1)}_{0\ell} H^{(1)}_{n-\ell} \ge \vc{\nu}^{(1)}_{1} H^{(1)}_{n-1}, \qquad n \ge 1.
\end{eqnarray}
  These formulas are also found in \cite{MiyaZhao2004,MiyaZwar2012}. Thus, $\vc{\nu}^{(1)}_{n}$ is given in terms of $\vc{\nu}^{(1)}_{0}$, $\{R^{(1)}_{0n}\}$ and $\{H^{(1)}_{n}\}$. These expressions will be also useful.
  
  We next consider a Markov additive process generated from the reflecting process $\{\vc{L}(\ell)\}$ by removing the boundary transitions on $S_{0} \cup S_{2}$, and present a useful identity, called the Wiener-Hopf factorization.
  
  Denote this Markov additive process by $\{\vc{Z}^{(1)}(\ell)\}$. Specifically, let $\{ A_{n}^{(1)}; n \ge -1\}$ be its Markov additive kernel, that is, $(i,j)$ entry of matrix $A_{n}^{(1)}$ is defined as
\begin{eqnarray*}
  [A^{(1)}_{n}]_{ij} = P(\vc{Z}^{(1)}(\ell+1) = (m+n,j) | \vc{Z}^{(1)}(\ell) = (m,i)), \qquad n, m \in \dd{Z}, i,j \in \dd{Z}_{+},
\end{eqnarray*}
 Obviously, the right-hand side is independent of $m$, and
\begin{eqnarray*}
  [A^{(1)}_{n}]_{ij} = \left\{\begin{array}{ll}
  P(\vc{X}^{(+)} = (n, j - i)), \qquad &  i \ge 1, j \ge i - 1, \\
  P(\vc{X}^{(1)} = (n, j - i)), \qquad &  i = 0, j \ge 0,
  \end{array} \right.
\end{eqnarray*}
  for $n \ge -1$, where $n$ and $i,j$ are referred to as level and background states, respectively. Define matrix $A_{*}^{(1)}(t)$ for $t > 0$ by
\begin{eqnarray*}
  [A^{(1)}_{*}(t)]_{ij} = \sum_{n=-1}^{\infty} t^{n} [A^{(1)}_{n}]_{ij}, \qquad t > 0, i,j \in \dd{Z}_{+},
\end{eqnarray*}
  as long as they exist. This matrix is called a matrix generating function of the transition kernel $A_{n}^{(1)}$. Similarly, we denote the matrix generating function $R^{(1)}_{n}$ by $R^{(1)}_{*}(t)$. Note that \eqn{R matrix 1} is valid also for $\vc{Z}^{(1)}(\ell)$ instead of $\vc{L}(\ell)$. Hence, $R^{(1)}_{*}(t)$ is well defined for the Markov additive process $\{\vc{Z}^{(1)}(\ell)\}$. Then, we have the Wiener-Hopf factorization:
\begin{eqnarray}
\label{eqn:WHF 1}
  I - A_{*}^{(1)}(t) = (I - R_{*}^{(1)}(t)) (I - G^{(1)}_{*}(t)),
\end{eqnarray}
  where $G^{(1)}_{*}(t)$ is the $\dd{Z} \times \dd{Z}$ matrix whose $(i,j)$ entry is given by
\begin{eqnarray*}
  g^{(1)}_{*}(t)(i,j) = E\left( \left. t^{Z^{(1)}_{1}(\sigma^{-0}_{1})} 1(Z^{(1)}_{2}(\sigma^{-0}_{1}) = j) \right| Z^{(1)}_{2}(0) = i \right), 
\end{eqnarray*}
  where $\sigma^{-0}_{1} = \inf \{\ell \ge 1; Z^{(1)}_{1}(\ell) \le 0\}$. The factorization \eqn{WHF 1} goes back to \cite{ArjaSpee1973}, but $t$ is limited to a complex number satisfying $|t| \le 1$ for simplicity. The present version is valid as long as $A^{(1)}_{*}(t)$ exists. This fact is formally proved in \cite{MiyaZwar2012}, but has been often ignored in the literature (see, e.g., \cite{MiyaZhao2004}), but it is crucial to consider the tail asymptotic problems.
  
\subsection{Iteration algorithm and bounds}
\label{sect:Iteration}

  To prove \thr{domain D}, we prepare a series of lemmas. A main body of the proof will be given in the next section. We first construct a sequence in the closure of $\sr{D}$, denoted by $\ol{\sr{D}}$ that converges to $\vc{\tau}$ of \eqn{tau}. For this, we rewrite \eqn{stationary equation 1} as
\begin{eqnarray}
\label{eqn:stationary equation 2}
  \lefteqn{(1 - \gamma(\vc{\theta}))\varphi_{+}(\vc{\theta}) + (1 - \gamma_{k}(\vc{\theta})) \varphi_{k}(\theta_{k}) }\nonumber\\
  && \hspace{10ex} = (\gamma_{3-k}(\vc{\theta}) - 1) \varphi_{3-k}(\theta_{3-k}) + (\gamma_{0}(\vc{\theta}) - 1) \varphi_{0}(0).
\end{eqnarray}
  We expand the confirmed region of $\varphi(\vc{\theta}) < \infty$ using \eqn{stationary equation 1} and \eqn{stationary equation 2} with help of \lem{stationary equation 2}.
 
  Let $\Gamma_{k}^{(0)} = \{ \vc{\theta} \in \Gamma_{k} \cap \Gamma; \theta_{3-k} \leq 0 \}$ for $k=1,2$. Note that $\Gamma_{k}^{(0)}$ is not empty by \lem{geometric 1}. Obviously, $\varphi_{3-k}(\theta_{3-k})$ is finite for $\vc{\theta} \in \Gamma_{k}^{(0)}$. Hence, by \lem{stationary equation 2}, $\varphi_{+}(\vc{\theta})$ and $\varphi_{k}(\theta_{k})$ are finite for $\vc{\theta} \in \Gamma_{k}^{(0)}$. Thus, $\varphi(\vc{\theta})$, $\varphi_{1}(\theta_{1})$ and $\varphi_{2}(\theta_{2})$ are finite for $\vc{\theta} \in \Gamma_{1}^{(0)} \bigcup \Gamma_{2}^{(0)}$. We define $\vc{\theta}^{(\triangle, 0)} \equiv (\theta_{1}^{(\triangle, 0)}, \theta_{2}^{(\triangle, 0)})$ by 
\begin{eqnarray*}
  \theta_{k}^{(\triangle, 0)} = \sup \{ \theta_{k}; (\theta_{1},\theta_{2}) \in \Gamma_{k}^{(0)} \}, \qquad k=1,2. 
\end{eqnarray*}
  By \lem{geometric 1}, at least one of $\theta_{1}^{(\triangle, 0)}$ and $\theta_{2}^{(\triangle, 0)}$ is positive under condition (iv). 

  We next define $\vc{\theta}^{(\triangle, n)} = (\theta_{1}^{(\triangle, n)},\theta_{2}^{(\triangle, n)})$ for $n \geq 1$ by
\begin{eqnarray*}
  \theta_{k}^{(\triangle, n)} = \sup \{ \theta_{k}; \vc{\theta} \in \Gamma_{k} \cap \Gamma, \theta_{3-k} \leq \theta_{3-k}^{(\triangle, n-1)} \} .
\end{eqnarray*}
Then, $\vc{\theta}^{(\triangle, n)}$ is nondecreasing in $n$, and $\vc{\theta}^{(\triangle, n)} \le \vc{\theta}^{\max}$ from our definition. Thus, the sequence $\vc{\theta}^{(\triangle, n)}$ converges to a finite positive vector because $\vc{\theta}^{(\triangle, 1)} > \vc{0}$. Denote this limit by $\vc{\theta}^{(\triangle, \infty)} \equiv (\theta_{1}^{(\triangle, \infty)},\theta_{2}^{(\triangle, \infty)})$. Because $\Gamma_{k} \cap \Gamma$ is a bounded convex set, we can see that
\begin{eqnarray}
\label{eqn:theta-infty}
  \theta_{k}^{(\triangle, \infty)} = \sup \{ \theta_{k}; \vc{\theta} \in \Gamma_{k} \cap \Gamma, \theta_{3-k} \leq \theta_{3-k}^{(\triangle, \infty)} \}, \qquad k = 1,2.
\end{eqnarray}
  This can be considered as a fixed point equation, and we have the following solution, which is proved in \app{tau}.

\begin{lemma}
\label{lem:tau}
  $\vc{\theta}^{(\triangle, \infty)} = \vc{\tau}$, and $\varphi(\vc{\theta})$ is finite for all $\vc{\theta} < \vc{\theta}^{(k, \infty)}$ for $k=1,2$, where
\begin{eqnarray*}
  \vc{\theta}^{(1, \infty)} = (\theta_{1}^{(\triangle, \infty)}, \ul{\xi}_{2}(\theta_{1}^{(\triangle, \infty)})), \qquad \vc{\theta}^{(2, \infty)} = (\ul{\xi}_{1}(\theta_{2}^{(\triangle, \infty)}), \theta_{2}^{(\triangle, \infty)}).
\end{eqnarray*}
\end{lemma}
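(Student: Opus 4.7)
The plan is to (i) recognize the iteration defining $\vc{\theta}^{(\triangle, n)}$ as a monotone coordinate-wise map, identify $\vc{\tau}$ as its unique fixed point reachable from $\vc{\theta}^{(\triangle,0)}$ via case analysis of (D1)--(D3), and (ii) propagate the finiteness of $\varphi$ along the same iteration by invoking \lem{stationary equation 2} repeatedly. Writing $T(\theta_1,\theta_2) := (T_1(\theta_2), T_2(\theta_1))$ with $T_k(s) := \sup\{\theta_k : \vc{\theta} \in \Gamma \cap \Gamma_k,\ \theta_{3-k} \le s\}$, each $T_k$ is non-decreasing in $s$, bounded above by $\theta_k^{(k,\cp)}$, and saturates at this bound once $s \ge \theta_{3-k}^{(k,\cp)}$ (because the constraint becomes inactive at the maximizer $\vc{\theta}^{(k,\cp)}$). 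Together with $\vc{\theta}^{(\triangle, 1)} > \vc{0}$ already noted in the paper, this makes $\{\vc{\theta}^{(\triangle, n)}\}_{n \ge 1}$ monotone non-decreasing and bounded, so it converges to a fixed point $\vc{\theta}^{(\triangle, \infty)}$ of $T$ satisfying \eqn{theta-infty}.

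Next I check $T(\vc{\tau}) = \vc{\tau}$ case by case. In (D1), the strict inequalities $\theta_2^{(1,\cp)} < \theta_2^{(2,\cp)} = \tau_2$ and $\theta_1^{(2,\cp)} < \theta_1^{(1,\cp)} = \tau_1$ place both $T_1(\tau_2)$ and $T_2(\tau_1)$ in the saturation regime, yielding $T(\vc{\tau}) = \vc{\tau}$. In (D2), $\tau_2 = \theta_2^{(2,\cp)} \le \theta_2^{(1,\cp)}$ makes the constraint in $T_1(\tau_2)$ bind on $\partial\Gamma$ (the unconstrained maximizer $\vc{\theta}^{(1,\cp)}$ has too-large $\theta_2$), so $T_1(\tau_2) = \ol{\xi}_1(\theta_2^{(2,\cp)}) = \tau_1$, while $\tau_1 \ge \theta_1^{(2,\cp)}$ keeps $T_2(\tau_1) = \theta_2^{(2,\cp)} = \tau_2$ in saturation; (D3) is symmetric. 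Coordinate-wise monotonicity of $T$ together with a case-wise base inequality $\vc{\theta}^{(\triangle, 0)} \le \vc{\tau}$ gives $\vc{\theta}^{(\triangle, n)} \le \vc{\tau}$ by induction, so $\vc{\theta}^{(\triangle,\infty)} \le \vc{\tau}$. For the reverse inequality, if $\theta_k^{(\triangle,\infty)} < \tau_k$ strictly for some $k$, then \eqn{theta-infty} forces $\theta_{3-k}^{(\triangle, \infty)} < \theta_{3-k}^{(k,\cp)}$ (otherwise $T_k$ would saturate to $\theta_k^{(k,\cp)}$ and the strict inequality would fail); feeding this back through the other coordinate with the order structure imposed by (D1)--(D3) contradicts the positivity of $\vc{\theta}^{(\triangle, 1)}$.

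For the finiteness claim I carry along the induction the invariant that $\varphi_k(\theta_k) < \infty$ for $\theta_k < \theta_k^{(\triangle,n)}$, $k=1,2$. The base case is the argument recalled just before the lemma, which uses \lem{stationary equation 2} on $\Gamma_k^{(0)}$ to give $\varphi_k$ finite up to $\theta_k^{(\triangle, 0)}$. For the inductive step, fix $\vc{\theta} \in \Gamma \cap \Gamma_1$ with $\theta_2 < \theta_2^{(\triangle, n)}$; part (b) of \lem{stationary equation 2} applies and yields $\varphi(\vc{\theta}) < \infty$, hence $\varphi_1(\theta_1) < \infty$ at this $\vc{\theta}$; sweeping $\theta_1$ up extends the finiteness of $\varphi_1$ to $\theta_1 < T_1(\theta_2^{(\triangle,n)}) = \theta_1^{(\triangle, n+1)}$, and part (c) does the same for $\varphi_2$. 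Passing $n \to \infty$ yields $\varphi_k(\theta_k) < \infty$ for all $\theta_k < \tau_k$. Finally, for $\vc{\theta} < \vc{\theta}^{(1,\infty)} = (\tau_1, \ul{\xi}_2(\tau_1))$, a short convexity argument (perturbing $\theta_2$ upward into $\Gamma$ at fixed $\theta_1 < \tau_1$) places $\vc{\theta}$ in $\Gamma$, and $\theta_k < \tau_k$ for both $k$, so \lem{stationary equation 2}(a) delivers $\varphi(\vc{\theta}) < \infty$; the case $\vc{\theta} < \vc{\theta}^{(2,\infty)}$ is symmetric.

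The principal difficulty I foresee is the identification $\vc{\theta}^{(\triangle,\infty)} = \vc{\tau}$ in (D2) and (D3), where the fixed point sits on $\partial\Gamma$ and the monotone map $T$ could in principle admit spurious interior fixed points reached from below; ruling these out rigorously requires exploiting both the convexity of $\Gamma$ and $\Gamma_k$ and the precise order relations characterizing each category.
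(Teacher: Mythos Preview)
Your approach is essentially the same as the paper's: case analysis over (D1)--(D3) for the fixed-point identification, and induction via \lem{stationary equation 2} for the finiteness of $\varphi$. The second part matches the paper almost verbatim.

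There is one soft spot in your first part. Your reverse-inequality step ends with ``feeding this back \dots\ contradicts the positivity of $\vc{\theta}^{(\triangle,1)}$,'' which is not the right contradiction: positivity of $\vc{\theta}^{(\triangle,1)}$ has no bearing on whether the limit is strictly below $\vc{\tau}$. The paper's argument here is cleaner and avoids both this issue and your acknowledged difficulty with (D2)/(D3). Rather than first checking that $\vc{\tau}$ is a fixed point of $T$ and then sandwiching, the paper works \emph{only} at the limit $\vc{\theta}^{(\triangle,\infty)}$ and derives a contradiction directly from \eqn{theta-infty}: in (D1), if $\theta_{k}^{(\triangle,\infty)} < \theta_{k}^{(k,\cp)}$ for both $k$, then both constraints $\theta_{3-k} \le \theta_{3-k}^{(\triangle,\infty)}$ are slack relative to the unconstrained maximizers, so at least one coordinate would be strictly increased by the right-hand side of \eqn{theta-infty}, contradicting that $\vc{\theta}^{(\triangle,\infty)}$ is the fixed point; the half-cases are dispatched the same way. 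For (D2) and (D3) the same contradiction runs with $\theta_{1}^{(1,\cp)}$ replaced by $\ol{\xi}_{1}(\theta_{2}^{(2,\cp)})$ (respectively $\theta_{2}^{(2,\cp)}$ by $\ol{\xi}_{2}(\theta_{1}^{(1,\cp)})$). This bypasses the need to verify that in (D2) the binding constraint for $T_{1}(\tau_{2})$ lies on $\partial\Gamma$ rather than $\partial\Gamma_{1}$, which your sketch asserts without justification.
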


  We need two more lemmas. Recall that $\vc{c} \in \dd{R}^{2}$ is called a direction vector if $\| \vc{c} \| = 1$. Let $\vc{1}$ be the vector all of whose entries are unit. The dimension of $\vc{1}$ is either 2 or $\infty$, which can be distinguished in the context of its usage.
  
\begin{lemma}
\label{lem:infinite domain}
  Let $\Delta(\vc{a}) = \{\vc{x} \in \dd{R}^{2}; \vc{0} \le \vc{x} < \vc{a} \}$ for $\vc{a} \ge \vc{1}$. Then, under conditions (i), (ii) and (iii), we have, for any direction vector $\vc{c} > \vc{0}$ and any $\vc{a} \ge \vc{1}$,
\begin{eqnarray}
\label{eqn:lower bound 1}
  \liminf_{x \to \infty} \frac 1{x} \log P(\vc{L} \in x \vc{c} + \Delta(\vc{a})) \ge - \sup\{ \br{\vc{\theta}, \vc{c}}; \gamma(\vc{\theta}) \le 1 \},
\end{eqnarray}
  and therefore $\varphi(\vc{\theta})$ is infinite for $\vc{\theta} \not\in \ol{\Gamma}_{\max}$, where $\ol{\Gamma}_{\max}$ is the closure of $\Gamma_{\max}$.
\end{lemma}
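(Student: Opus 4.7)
The plan is to derive the sample-path large-deviations lower bound \eqn{lower bound 1} by applying Cram\'er's theorem to the free random walk $\{\vc{Y}(\ell)\}$, transferring the estimate to $\{\vc{L}(\ell)\}$ via stationarity and a coupling that keeps trajectories in the interior $S_{+}$, and then derive the $\varphi$-infinite claim by a separating-hyperplane argument.

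For the lower bound, I would fix a reference state $\vc{i}_{0} \in S_{+}$ and use stationarity together with the positivity $\nu(\vc{i}_{0}) > 0$ (guaranteed by irreducibility (ii) and the existence of $\nu$) to write, for each $n \ge 1$,
\begin{eqnarray*}
  P(\vc{L} \in x\vc{c} + \Delta(\vc{a})) \ge \nu(\vc{i}_{0})\, P(\vc{L}(n) \in x\vc{c} + \Delta(\vc{a}) \mid \vc{L}(0) = \vc{i}_{0}).
\end{eqnarray*}
I would then set $n = \lfloor tx \rfloor$ for a free parameter $t > 0$ and lower-bound the right-hand side by the probability that the free random walk $\{\vc{Y}(\ell) + \vc{i}_{0}\}$ lands in $x\vc{c} + \Delta(\vc{a})$ at time $n$ while staying in $S_{+}$ for all intermediate $\ell$; on this event the reflecting and free trajectories coincide, so the interior transition law $\vc{X}^{(+)}$ is used at every step. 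Because $\vc{c} > \vc{0}$ (strictly), the straight line from $\vc{i}_{0}$ to $x\vc{c}$ lies well inside $S_{+}$ for all $x$ sufficiently large, and a standard sample-path LDP tube estimate shows that interior confinement is compatible with the typical minimizing trajectory and hence costs nothing on the exponential scale. Cram\'er's theorem for $\vc{Y}(n)/n$ then gives, with $I(\vc{v}) = \sup_{\vc{\theta}}(\br{\vc{\theta},\vc{v}} - \log \gamma(\vc{\theta}))$,
\begin{eqnarray*}
  \liminf_{x \to \infty} \frac{1}{x} \log P(\vc{L} \in x\vc{c} + \Delta(\vc{a})) \ge - t\, I(\vc{c}/t).
\end{eqnarray*}
Optimizing over $t > 0$ and invoking the support-function identity
\begin{eqnarray*}
  \inf_{t > 0} t\, I(\vc{c}/t) = \inf_{t > 0} \sup_{\vc{\theta}} \bigl(\br{\vc{\theta},\vc{c}} - t \log \gamma(\vc{\theta})\bigr) = \sup\{\br{\vc{\theta},\vc{c}}; \gamma(\vc{\theta}) \le 1\},
\end{eqnarray*}
valid because $\ol{\Gamma}$ is nonempty, compact, and convex by \rem{Gamma} and condition (iii), produces \eqn{lower bound 1}.

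For the conclusion about $\varphi$, suppose $\vc{\theta}_{0} \notin \ol{\Gamma}_{\max}$. Since $\Gamma$ is down-saturated in the definition of $\Gamma_{\max}$, this means $(\vc{\theta}_{0} + \dd{R}_{+}^{2}) \cap \ol{\Gamma} = \emptyset$; the two sets being closed convex with $\ol{\Gamma}$ compact, strict separation yields $\vc{c} \ge \vc{0}$ with $\br{\vc{c},\vc{\theta}_{0}} > \sup\{\br{\vc{c},\vc{\theta}'}; \vc{\theta}' \in \ol{\Gamma}\}$, and a small perturbation makes $\vc{c} > \vc{0}$ with $\|\vc{c}\| = 1$ while preserving the strict inequality by some $\delta > 0$. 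For $\vc{i} \in x\vc{c} + \Delta(\vc{a})$ one has $\br{\vc{\theta}_{0},\vc{i}} \ge x\br{\vc{c},\vc{\theta}_{0}} + O(1)$, and restricting the series for $\varphi(\vc{\theta}_{0})$ to such lattice points for $x = 1, 2, \dots$ gives
\begin{eqnarray*}
  \varphi(\vc{\theta}_{0}) \ge \sum_{x} e^{x \br{\vc{c},\vc{\theta}_{0}} + O(1)}\, P(\vc{L} \in x\vc{c} + \Delta(\vc{a})) \ge \sum_{x} e^{x\delta/2 + O(1)} = \infty,
\end{eqnarray*}
using \eqn{lower bound 1} to bound the tail probabilities from below for large $x$.

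The principal technical hurdle is the interior-confinement step: making rigorous that restricting to paths staying in $S_{+}$ does not inflate the Cram\'er rate. This is standard once $\vc{c} > \vc{0}$ forces the straight-line minimizer of the rate functional to remain uniformly away from $\partial S$, and it is essentially the lower bound provided by \cite{BoroMogu2001}; everything else is routine stationarity, convex analysis, and separation.
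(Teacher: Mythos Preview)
Your proposal is correct in outline and follows the same broad strategy as the paper --- Cram\'er for the free walk, transfer to the reflected walk, optimize over the time scale $t$, and then a convex-geometric argument for the $\varphi=\infty$ claim --- but the two proofs differ in exactly the place you flag as the ``principal technical hurdle'', and also in the second part.

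For the confinement step you invoke a sample-path LDP tube argument or defer to \cite{BoroMogu2001}. The paper explicitly avoids that route (remarking that the relevant proofs in \cite{BoroMogu2001} are ``largely omitted'' and ``not well accessible'') and instead uses a concrete combinatorial device: the permutation (cycle) lemma, applied once in each coordinate, yields
\[
  P\bigl(\vc{Y}(n)\in\vc{x}+\Delta(\vc{a}),\ \min_{\ell\le n}Y_{1}(\ell)>0,\ \min_{\ell\le n}Y_{2}(\ell)>0\bigr)
  \;\ge\; \frac{1}{n^{2}}\,P\bigl(\vc{Y}(n)\in\vc{x}+\Delta(\vc{a})\bigr),
\]
and the polynomial prefactor $1/n^{2}$ is harmless on the exponential scale. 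The transfer to $\nu$ is then done not via your one-line stationarity bound but through the occupation-measure representation \eqn{occupation measure 1} with $U=\partial S$, which picks out a single starting pair $(\vc{y}_{0},\vc{y}_{1})$ and a single time $m$. Your stationarity argument is shorter; the paper's has the virtue of making the confinement cost completely explicit and self-contained.

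For the $\varphi(\vc{\theta}_{0})=\infty$ conclusion, your separating-hyperplane argument is cleaner than the paper's, which splits into the cone $C_{\max}$ generated by $\vc{\theta}^{(1,\max)},\vc{\theta}^{(2,\max)}$, handles $\vc{\theta}_{0}\notin C_{\max}$ via \lem{lower bound c}, and for $\vc{\theta}_{0}\in C_{\max}\setminus\ol{\Gamma}_{\max}$ finds $\omega_{0}$ with $\vc{\theta}_{0}=a\,\vc{\eta}(\vc{c}(\omega_{0}))$, $a>1$, then applies a single Markov inequality. One small slip in your version: the boxes $x\vc{c}+\Delta(\vc{a})$ for consecutive integers $x$ overlap (since $c_{k}\le 1\le a_{k}$), so the displayed sum $\sum_{x}e^{\cdots}P(\vc{L}\in x\vc{c}+\Delta(\vc{a}))$ double-counts and is not a valid lower bound for $\varphi(\vc{\theta}_{0})$. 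This is easily repaired by either spacing the $x$'s, or --- as the paper does --- using a single term $\varphi(\vc{\theta}_{0})\ge e^{x\br{\vc{\theta}_{0},\vc{c}}+O(1)}P(\vc{L}\in x\vc{c}+\Delta(\vc{a}))$ and letting $x\to\infty$.
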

\begin{remark}
\label{rem:infinite domain}
  This lemma is valid without condition (iv). Hence, it can be used for $E(X^{(+)}_{1}) = E(X^{(+)}_{2}) = 0$. In this case, the right side of \eqn{lower bound 1} is zero, so the stationary distribution $\nu$ cannot have a light tail.
\end{remark}

The first part of \lem{infinite domain} is obtained in Theorem 3.1 of \cite{BoroMogu2001} (see also Theorem 1.6 there). However, these theorems use Theorem 1.2 there, and its proof are largely omitted for the lower bound. Thus, the results are not well accessible, so we give its proof in \app{infinite domain}. We need one more lemma.
\begin{lemma}
\label{lem:lower bound c}
 For each $k=1,2$, 
\begin{eqnarray}
\label{eqn:LD lower bound 1}
  \liminf_{n \to \infty} \frac 1n \log P(L_{1} \ge n, L_{2} = i) \ge - \theta^{(1,\cp)}_{1}, \qquad i \in \dd{Z}_{+}, 
\end{eqnarray}
 and therefore $\theta_{k} > \theta^{(k,\cp)}_{k}$ implies $\varphi_{k}(\theta_{k}) = \infty$, and therefore $\varphi(\vc{\theta}) = \infty$.
 
\begin{proof}
  By symmetry, it is sufficient to prove the lemma for $k=1$. From \eqn{renewal equation 2}, we have
\begin{eqnarray*}
  P(L_{1} \ge n, L_{2} = i) = \sum_{\ell=n}^{\infty} \left[\vc{\nu}^{(1)}_{n} \right]_{i} \ge \left[\vc{\nu}^{(1)}_{0} R^{(1)}_{01}  \sum_{\ell=n-1}^{\infty} H^{(1)}_{\ell} \right]_{i}, \qquad i \in \dd{Z}_{+}.
\end{eqnarray*}
  Hence, \eqn{LD lower bound 1} follows from Theorem 4.1 of \cite{KobaMiyaZhao2010}.
\end{proof}
\end{lemma}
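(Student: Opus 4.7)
The plan is to derive~\eqn{LD lower bound 1} from the occupation-measure machinery developed in Section~\ref{sect:Prelimiaries} and then deduce the two consequences by a routine Markov-type comparison. By the symmetry of the roles of $k=1,2$, it suffices to treat $k=1$.

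First I would start from the renewal inequality in~\eqn{renewal equation 2}, namely $\vc{\nu}^{(1)}_\ell \ge \vc{\nu}^{(1)}_0 R^{(1)}_{01} H^{(1)}_{\ell-1}$ for $\ell \ge 1$, and sum over $\ell \ge n$ to obtain
\begin{eqnarray*}
  P(L_1 \ge n, L_2 = i) = \sum_{\ell=n}^\infty [\vc{\nu}^{(1)}_\ell]_i \ge \Bigl[\vc{\nu}^{(1)}_0 R^{(1)}_{01} \sum_{\ell=n-1}^\infty H^{(1)}_\ell\Bigr]_i.
\end{eqnarray*}
The vector $\vc{\nu}^{(1)}_0 R^{(1)}_{01}$ is fixed and nontrivial by the irreducibility assumption~(ii), so the exponential decay rate of the right-hand side is governed by that of the tail sum of the occupation kernel $H^{(1)}_n$ for the Markov additive process $\{\vc{Z}^{(1)}(\ell)\}$. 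Everything thus reduces to identifying the decay rate of $H^{(1)}_n$.

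The next step is a spectral identification. Through the Wiener--Hopf factorization~\eqn{WHF 1}, the matrix generating function $H^{(1)}_*(t) = (I - R^{(1)}_*(t))^{-1}$ extends analytically in $t = e^{\theta_1}$ as long as $(I - A^{(1)}_*(e^{\theta_1}))$ admits the factorization with $G^{(1)}_*$ substochastic; by the very definition of $\vc{\theta}^{(1,\cp)}$, this condition holds precisely up to $\theta_1 = \theta^{(1,\cp)}_1$, the largest $\theta_1$ for which some $\theta_2$ satisfies both $\gamma(\vc{\theta}) \le 1$ and $\gamma_1(\vc{\theta}) \le 1$. I would then invoke Theorem~4.1 of \cite{KobaMiyaZhao2010} to conclude $\liminf_{n\to\infty} -\frac 1n \log [H^{(1)}_n]_{ji} \le \theta^{(1,\cp)}_1$, which transfers to the tail sum and yields~\eqn{LD lower bound 1}. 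The main obstacle lies exactly in this spectral identification: because upward jumps are unbounded, $A^{(1)}_*(t)$ acts on an infinite-dimensional background space, so a direct Perron--Frobenius analysis would demand careful handling of tightness and of the boundary row in the Wiener--Hopf split; the cleanest path is to cite the quoted theorem rather than redo that analysis here.

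For the final consequence, fix $\theta_k > \theta^{(k,\cp)}_k$ and pick $\epsilon > 0$ with $\theta_k - \epsilon > \theta^{(k,\cp)}_k$. The bound~\eqn{LD lower bound 1} with $i = 0$ gives $P(L_k \ge n, L_{3-k} = 0) \ge e^{-(\theta^{(k,\cp)}_k + \epsilon)n}$ for all sufficiently large $n$, whence $\varphi_k(\theta_k) = \sum_n e^{\theta_k n} P(L_k = n, L_{3-k}=0)$ diverges by Abel summation against the above tail bound. Finally, on the event $\{L_{3-k}=0\}$ one has $e^{\theta_{3-k} L_{3-k}} = 1$, so $\varphi(\vc{\theta}) \ge \varphi_k(\theta_k) = \infty$, completing the chain of implications.
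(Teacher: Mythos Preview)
Your proposal is correct and follows essentially the same approach as the paper: both derive the lower bound by summing the renewal inequality~\eqn{renewal equation 2} and invoking Theorem~4.1 of \cite{KobaMiyaZhao2010} to identify the decay rate of the occupation kernel tail. You supply more explanation (the Wiener--Hopf/spectral heuristic and the explicit derivation of $\varphi_k(\theta_k)=\infty$), but the structure and the key external input are identical.
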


We are now ready to prove \thr{domain D}.

\subsection{Proof of \thr{domain D}}
\label{sect:Theorem 3.1}

  We first prove that $\{ \vc{\theta} \in \Gamma_{\max}; \vc{\theta} < \vc{\tau} \} \subset \sr{D}$. Since $\vc{\theta} \in \Gamma_{\max}$ implies the existence of $\vc{\theta}' \in \Gamma$ such that $\vc{\theta}' > \vc{\theta}$, this is sufficient to prove that
\begin{eqnarray*}
  \Gamma_{\vc{\tau}} \equiv \{ \vc{\theta} \in \Gamma; \vc{\theta} < \vc{\tau} \} \subset \sr{D}.
\end{eqnarray*}
  For cases (D2) and (D3), this $\Gamma_{\vc{\tau}}$ is a subset of $\{\vc{\theta} \in \dd{R}^{2}; \vc{\theta} < \vc{\theta}^{(2, \infty)} \}$ and $\{\vc{\theta} \in \dd{R}^{2}; \vc{\theta} < \vc{\theta}^{(1, \infty)} \}$, respectively, on which $\varphi(\vc{\theta})$ is finite by \lem{tau}. Hence, we only need to consider the case (D1). In this case, $\varphi_{k}(\theta) < \infty$ for $\theta < \theta^{(k,\infty)}_{k} = \tau_{k}$ $(k=1,2)$, and therefore $\varphi(\vc{\theta}) < \infty$ for $\vc{\theta} \in \Gamma$ by \lem{stationary equation 2}. Thus, we have $\Gamma_{\vc{\tau}} \subset \sr{D}$. Obviously, this also implies that
\begin{eqnarray}
\label{eqn:domain D 2a}
  \tau_{k} = \theta^{(k, \infty)}_{k} \le \sup\{ \theta_{k} \ge 0; \varphi_{k}(\theta_{k}) < \infty \}.
\end{eqnarray}
  
  We next prove that $\sr{D} \subset \{ \vc{\theta} \in \Gamma_{\max}; \vc{\theta} < \vc{\tau} \}$. Because of the symmetric roles of $\theta_{1}$ and $\theta_{2}$, it is sufficient to prove that either $\theta_{1} > \tau_{1}$ or $\gamma(\vc{\theta}) > 1$ with $\vc{\theta} \not\in \Gamma_{\max}$ implies $\varphi(\vc{\theta}) = \infty$. The latter is immediate from \lem{infinite domain}, and therefore we only need to prove that
\begin{eqnarray}
\label{eqn:diverge 1}
  \theta_{1} > \tau_{1} \mbox{ implies } \varphi_{1}(\theta_{1}) = \infty,
\end{eqnarray}
   which together with \eqn{domain D 2a} also verify \eqn{domain D 2}. We prove \eqn{diverge 1} for the cases (D1), (D2) and (D3) separately.
  
  We first consider the case that (D1) or (D3) holds. In this case, $\theta^{(1,\infty)}_{1} = \theta_{1}^{(1, \cp)}$, and therefore \lem{lower bound c} verifies the claim. 
  
  We next consider the case that (D2) holds. In this case, $\theta^{(1, \infty)}_{1} = \ol{\xi}_{1}(\theta_{2}^{(2, \cp)})$, and $\theta^{(2, \cp)}_{2} = \theta^{(2, \infty)}_{2}$. Then, applying the same argument for $\theta_{1}$, we have $\varphi(\vc{\theta}) = \infty$ and $\varphi_{2}(\theta_{2}) = \infty$ for $\theta_{2} > \theta^{(2, \infty)}_{2}$. If $\theta^{(1, \infty)}_{1} = \theta^{(1, \cp)}_{1}$, then \lem{lower bound c} again verifies the claim \eqn{diverge 1}. Hence, we assume that $\theta^{(1, \infty)}_{1} < \theta^{(1, \cp)}_{1}$.
  
  In what follows, we consider the stationary equation \eqn{stationary equation 1} for $\theta_{1} \in (0, \theta^{(1, \infty)}_{1})$. Let $\theta_{2} = \ul{\xi}_{2}(\theta_{1})$. Since $\varphi(\vc{\theta}) < \infty$ for this $\vc{\theta}$, \eqn{stationary equation 1} is valid, and yields
\begin{eqnarray}
\label{eqn:stationary equation 2c}
  \lefteqn{(1 - \gamma_{1}(\theta_{1}, \ul{\xi}_{2}(\theta_{1}))) \varphi_{1}(\theta_{1})} \nonumber \hspace{8ex}\\
 && = (\gamma_{2}(\theta_{1}, \ul{\xi}_{2}(\theta_{1})) - 1) \varphi_{2}(\ul{\xi}_{2}(\theta_{1})) + (\gamma_{0}(\theta_{1}, \ul{\xi}_{2}(\theta_{1})) - 1) \varphi_{0}(0).
\end{eqnarray}
  We increase $\theta_{1}$ up to $\theta^{(1,\infty)}_{1}$ in this equation. Note that we can find $\epsilon_{0} > 0$ such that $\gamma_{1}(\theta_{1}, \ul{\xi}_{2}(\theta_{1})) \ne 1$ and $\gamma_{2}(\theta_{1}, \ul{\xi}_{2}(\theta_{1})) \ne 1$ for $\theta_{1} \in [\theta^{(1,\infty)}_{1} - \epsilon_{0}, \theta^{(1, \cp)}_{1}]$. Since $\ul{\xi}_{2}(\theta^{(1, \infty)}_{1}) = \theta^{(2, \infty)}_{2}$ and $\ul{\xi}_{2}(\theta)$ is increasing for $\theta \in (\theta^{(1,\infty)}_{1} - \epsilon_{0}, \theta^{(1, \cp)}_{1})$, we have, for any $\epsilon \in (0, \epsilon_{0})$,
\begin{eqnarray*}
  \lim_{\theta_{1} \uparrow \theta^{(1, \infty)}_{1} + \epsilon} \varphi_{2}(\ul{\xi}_{2}(\theta_{1})) = \infty.
\end{eqnarray*}
  This and \eqn{stationary equation 2c} verify \eqn{diverge 1}. Hence, the proof of \thr{domain D} is completed.
 
\subsection{Proof of Theorem \thrt{decay rate 0}}
\label{sect:Theorem 3.2}

   We present two lemmas. They provide suitable bounds for the tail probabilities, and will immediately proves \thr{decay rate 0}.
  
\begin{lemma}
\label{lem:LD upper bound 1}
  Under the assumptions of \thr{domain D}, 
\begin{eqnarray}
\label{eqn:LD upper bound 1}
 && \limsup_{n \to \infty} \frac 1n \log P( L_{k} \ge n, L_{3-k} = i ) \le - \tau_{k}, \qquad i \in \dd{Z}_{+}, k=1,2,
\end{eqnarray}
  and, for any directional vector $\vc{c} \ge \vc{0}$,
\begin{eqnarray}
\label{eqn:LD upper bound 2}
 \limsup_{x \to \infty} \frac 1x \log P( \br{\vc{c}, \vc{L}} \ge x ) \le - \sup \{ u \ge 0; u \vc{c} \in \sr{D} \}.
\end{eqnarray}
\begin{proof}
  \eqn{LD upper bound 1} is immediate from \eqn{domain D 1} and \eqn{domain D 2}. To see \eqn{LD upper bound 2}, we use Markov inequality:
\begin{eqnarray*}
  e^{u x} P(\br{\vc{c}, \vc{L}} \ge x) \le E(e^{\br{u\vc{c}, \vc{L}}}), \qquad u \ge 0, n = 0,1,\ldots.
\end{eqnarray*}
  Taking logarithm of both sides, dividing by $x > 0$ and letting $x \to \infty$, we have \eqn{LD upper bound 2}.
\end{proof}
\end{lemma}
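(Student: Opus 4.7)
The plan is to apply Markov's inequality in both parts of the lemma, feeding in the characterization of the convergence domain $\sr{D}$ that \thr{domain D} already provides.

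For \eqn{LD upper bound 1}, I will fix $k \in \{1,2\}$ and $i \in \dd{Z}_{+}$ and pick an arbitrary $\theta \in (0, \tau_{k})$. From \eqn{domain D 1} the set $\sr{D}$ is open with $\sup\{\theta_{k} : \vc{\theta} \in \sr{D}\} = \tau_{k}$, so, using that $\vc{\tau} > \vc{0}$ under the stability assumption together with \lem{geometric 1}, I can select $\vc{\theta} \in \sr{D}$ with $\theta_{k} = \theta$ and $\theta_{3-k} \ge 0$. For such $\vc{\theta}$, $\varphi(\vc{\theta}) < \infty$, and Markov's inequality gives
\begin{eqnarray*}
P(L_{k} \ge n, L_{3-k} = i) \le e^{-\theta n - \theta_{3-k} i} \varphi(\vc{\theta}).
\end{eqnarray*}
Taking logarithms, dividing by $n$, and letting $n \to \infty$ produces $\limsup_{n \to \infty} \frac{1}{n} \log P(L_{k} \ge n, L_{3-k} = i) \le -\theta$; letting $\theta \uparrow \tau_{k}$ closes this part.

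For \eqn{LD upper bound 2}, I will fix any $u \ge 0$ with $u \vc{c} \in \sr{D}$, so that $\varphi(u\vc{c}) = E(e^{u \br{\vc{c}, \vc{L}}}) < \infty$, and apply Markov's inequality:
\begin{eqnarray*}
P(\br{\vc{c}, \vc{L}} \ge x) \le e^{-ux} \varphi(u\vc{c}).
\end{eqnarray*}
After taking logarithms, dividing by $x > 0$, and letting $x \to \infty$ this yields $\limsup_{x \to \infty} \frac{1}{x} \log P(\br{\vc{c}, \vc{L}} \ge x) \le -u$; taking the supremum over admissible $u$ yields the claimed bound.

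The only place any care is needed is in the first part, where one has to justify that for each $\theta < \tau_{k}$ a vector $\vc{\theta} \in \sr{D}$ with $\theta_{k} = \theta$ and $\theta_{3-k} \ge 0$ actually exists; this is immediate from the explicit description of $\sr{D}$ in \thr{domain D} together with the positivity of $\vc{\tau}$. Otherwise, both bounds are direct Markov/Chernoff estimates once \thr{domain D} is in hand, so no further technical work is needed.
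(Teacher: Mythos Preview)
Your approach is the same Markov/Chernoff argument the paper uses, and the second part is verbatim what the paper does. There is, however, a small gap in your treatment of \eqn{LD upper bound 1}: the extra requirement $\theta_{3-k}\ge 0$ is both unnecessary and not always attainable. When $\ol{\xi}_{3-k}(\tau_{k})<0$ (equivalently $\alpha_{k}=\beta_{k}<\tau_{k}$; see \eqn{alpha k} and the right panel of \fig{marginal 1}), every point of $\Gamma$ with $k$-th coordinate close to $\tau_{k}$ has $(3{-}k)$-th coordinate strictly negative, so no $\vc{\theta}\in\sr{D}$ with $\theta_{k}=\theta$ near $\tau_{k}$ and $\theta_{3-k}\ge 0$ exists. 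Thus the step you flag as ``immediate from the explicit description of $\sr{D}$'' actually fails in that regime.

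The fix is trivial: your displayed inequality
\[
P(L_{k}\ge n,\,L_{3-k}=i)\le e^{-\theta n-\theta_{3-k} i}\,\varphi(\vc{\theta})
\]
holds for \emph{any} sign of $\theta_{3-k}$, because on $\{L_{3-k}=i\}$ the factor $e^{\theta_{3-k}L_{3-k}}$ equals $e^{\theta_{3-k}i}$ exactly. So you only need some $\vc{\theta}\in\sr{D}$ with $\theta_{k}=\theta$, and this always exists for $\theta<\tau_{k}$: since $\theta<\tau_{k}\le\theta_{k}^{(k,\max)}$, choose $\theta_{3-k}$ sufficiently negative so that $(\theta,\theta_{3-k})\in\Gamma_{\max}$ and $(\theta,\theta_{3-k})<\vc{\tau}$. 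With that correction your argument matches the paper's.
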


\begin{lemma}
\label{lem:LD lower bound 1}
  Under the assumptions of \thr{domain D}, 
\begin{eqnarray}
\label{eqn:lower bound 5}
  \liminf_{n \to \infty} \frac 1n \log P( L_{1} \ge n, L_{2} = i ) \ge - \tau_{1}, \qquad i \in \dd{Z}_{+}.
\end{eqnarray}
  Further assume that the Markov additive kernel $\{A^{(k)}_{n}; n \ge -1\}$ is 1-arithmetic concerning the additive component. If either (D1) with $\tau_{1} = \theta^{(1, \cp)}_{1} < \theta^{(1, \max)}_{1}$ or (D3) holds, then, for some positive vector $\vc{b}$,
\begin{eqnarray}
\label{eqn:reflecting 0}
 && \lim_{n \to \infty} e^{\tau_{1} n} \vc{\nu}^{(1)}_{n} = \vc{b},
\end{eqnarray}
  while, if (D2) with $\tau_{1} < \theta^{(1, \cp)}_{1}$ holds, then, for some constants $b' > 0$,
\begin{eqnarray}
\label{eqn:reflecting 1}
 && \liminf_{n \to \infty} e^{\tau_{1} n} \vc{\nu}^{(1)}_{n} \ge b' \vc{x},
\end{eqnarray}
where the limit is taken component-wise and $\vc{x}$ is the left invariant vector of $A_{*}^{(1)}(\tau_{1})$. 
\end{lemma}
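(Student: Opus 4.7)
The plan is to exploit the matrix Markov-additive structure of $\{\vc{Z}^{(1)}(\ell)\}$ via the renewal representation \eqn{renewal equation 2} and the Wiener--Hopf identity \eqn{WHF 1}. Taking the matrix generating function, $\vc{\nu}_*^{(1)}(t) := \sum_{n \ge 0} t^n \vc{\nu}_n^{(1)} = \vc{\nu}_0^{(1)} R_*^{(1,0)}(t) (I - R_*^{(1)}(t))^{-1}$, the decay of $\vc{\nu}_n^{(1)}$ is governed by the smallest positive singularity of the right-hand side, which is either $e^{\theta_1^{(1,\cp)}}$ (where the Perron eigenvalue of $R_*^{(1)}(t)$ first reaches $1$) or a singularity of $R_*^{(1,0)}(t)$ at $e^{\tau_1}$ produced by the unbounded upward jumps out of $S_2$. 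Under (D1) and (D3) the two coincide, while under (D2) only the latter is active and $\tau_1 < \theta_1^{(1,\cp)}$.

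For the lower bound \eqn{lower bound 5} in the first part, cases (D1) and (D3) are immediate because $\tau_1 = \theta_1^{(1,\cp)}$ there and Lemma \lemt{lower bound c} delivers the bound directly. In case (D2) the lemma is too weak, so I would instead use the component-wise inequality $\vc{\nu}_n^{(1)} \ge \vc{\nu}_1^{(1)} H_{n-1}^{(1)}$ from \eqn{renewal equation 2}, combined with the geometric decay of $[\vc{\nu}_1^{(1)}]_j$ in $j$ at rate $\tau_2$ furnished by Theorem \thrt{domain D} and the complementary exponential growth in $j$ (for fixed $n$) of $[H_{n-1}^{(1)}]_{ji}$ captured by the Markov-additive large-deviation rate function. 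A saddle-point choice of $j$ balancing these two exponents yields the rate $\tau_1$, using crucially that $(\tau_1, \tau_2)$ lies on $\partial \Gamma$.

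For the exact asymptotic \eqn{reflecting 0} under (D1) with $\tau_1 = \theta_1^{(1,\cp)} < \theta_1^{(1,\max)}$ or under (D3), the strict inequality rules out a branch-type singularity: the Perron eigenvalue of $R_*^{(1)}(t)$ is smooth at $t = e^{\tau_1}$, equals $1$ there with nonzero derivative, so $(I - R_*^{(1)}(t))^{-1}$ has a simple pole at $e^{\tau_1}$ with rank-one residue built from the left and right Perron eigenvectors of $R_*^{(1)}(e^{\tau_1})$. Combined with the $1$-arithmetic hypothesis on $\{A_n^{(1)}\}$ (which rules out periodic oscillation along an arithmetic sub-lattice), a matrix Markov renewal theorem applied to $\vc{\nu}_n^{(1)} = \sum_{\ell=1}^n \vc{\nu}_0^{(1)} R_{0\ell}^{(1)} H_{n-\ell}^{(1)}$ then gives $e^{\tau_1 n} \vc{\nu}_n^{(1)} \to \vc{b}$ component-wise, with $\vc{b}$ a positive multiple of the left Perron eigenvector.

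Case (D2) with $\tau_1 < \theta_1^{(1,\cp)}$ is where I expect the main technical difficulty. Here $(I - R_*^{(1)}(t))^{-1}$ is analytic at $t = e^{\tau_1}$ and the dominant singularity lies entirely in $R_*^{(1,0)}(t)$ through the $S_2$-boundary transitions driven by the unbounded jump $\vc{X}^{(2)}$. I would again start from $\vc{\nu}_n^{(1)} \ge \vc{\nu}_1^{(1)} H_{n-1}^{(1)}$ and combine the component-wise exponential behavior of $H_{n-1}^{(1)}$ jointly in $(n, j)$ with the exponential tail of $\vc{\nu}_1^{(1)}$ to produce the bound $\liminf e^{\tau_1 n} \vc{\nu}_n^{(1)} \ge b' \vc{x}$, with $\vc{x}$ the left invariant vector of $A_*^{(1)}(e^{\tau_1})$ identified from the boundary kernel via Wiener--Hopf. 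The principal obstacle is uniformity: justifying the interchange of the $\liminf$ with the infinite sum over the background state requires a uniform two-dimensional large-deviation bound on $H_{n-1}^{(1)}$ that remains sharp along the relevant saddle, and this in turn leans on the full light-tail condition (iii) together with a Fatou-type truncation argument.
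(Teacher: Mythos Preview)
Your treatment of (D1) and (D3) is essentially what the paper does: when $\tau_1 = \theta_1^{(1,\cp)}$, \lem{lower bound c} gives \eqn{lower bound 5} directly, and for \eqn{reflecting 0} the paper invokes the matrix Markov renewal theorem (Theorem 4.1 of \cite{MiyaZhao2004}) after checking via \cite{KobaMiyaZhao2010} that $A_*^{(1)}(e^{\tau_1})$ is positive recurrent when $\tau_1 < \theta_1^{(1,\max)}$. Your simple-pole picture of $(I - R_*^{(1)}(t))^{-1}$ is the correct heuristic for this.

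For (D2) with $\tau_1 < \theta_1^{(1,\cp)}$, however, your plan has a genuine gap. A saddle-point balancing of the $\tau_2$-decay of $[\vc{\nu}_1^{(1)}]_j$ against a large-deviation growth rate of $[H_{n-1}^{(1)}]_{ji}$ can at best produce the logarithmic bound \eqn{lower bound 5}; it cannot deliver the constant-level inequality \eqn{reflecting 1}, precisely because of the uniformity problem you yourself flag. The paper resolves this by a device you do not mention: a \emph{change of measure}. One conjugates with the diagonal of the left invariant vector $\vc{x}$ and the exponential tilt $e^{\tau_1 n}$, setting $\tilde{H}_n^{(1)} = \Delta_{\vc{x}}^{-1}(e^{\tau_1 n} H_n^{(1)})^{\rs{t}}\Delta_{\vc{x}}$; under this transform $\tilde{R}^{(1)} = \sum_n \tilde{R}_n^{(1)}$ becomes \emph{stochastic}, so each entry of $\tilde{H}_n^{(1)}$ is bounded by $1$. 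The inequality $e^{\tau_1 n}\vc{\nu}_n^{(1)} \ge e^{\tau_1}(\Delta_{\vc{x}}\tilde{H}_{n-1}^{(1)}\Delta_{\vc{x}}^{-1}(\vc{\nu}_1^{(1)})^{\rs{t}})^{\rs{t}}$ then reduces \eqn{reflecting 1} to the bounded convergence theorem, once one knows $\tilde{H}_n^{(1)}\vc{1} \to a^{-1}\vc{1}$ and $[\vc{\nu}_1^{(1)}]_j/x_j$ has a positive limit. The latter follows from the exact asymptotic in the $2$-direction (already established under (D2)) together with \eqn{x i convergence}. The former cannot be obtained by the ordinary Markov renewal theorem because the background chain $\tilde{A}^{(1)}$ is \emph{transient}; the paper handles it via a further change of measure to an honest random walk $\{\hat{\vc{Y}}^{(1)}(\ell)\}$ and a coupling argument (Appendix~G). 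Your proposal does not supply any of this machinery, and the large-deviation route you suggest does not circumvent it.

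A minor point: your diagnosis that under (D2) the dominant singularity ``lies entirely in $R_*^{(1,0)}(t)$ through the unbounded jump $\vc{X}^{(2)}$'' is off. The singularity at $e^{\tau_1}$ is inherited from the divergence of $\varphi_2$ at $\tau_2$ via the stationary equation, and is present even when all jumps are bounded; it is not a feature of unboundedness of $\vc{X}^{(2)}$.
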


\begin{proof}
  If $\tau_{1} = \theta^{(1, \max)}_{1}$, then we obviously have \eqn{lower bound 5} by \lem{infinite domain}. Otherwise, \eqn{lower bound 5} follows from \eqn{reflecting 0} and \eqn{reflecting 1} or their $\delta$-arithmetic versions for each positive integer $\delta$. Thus, it remains to prove \eqn{reflecting 0} and \eqn{reflecting 1}.
  
  We first consider the case where either (D1) with $\theta^{(1, \cp)}_{1} < \theta^{(1, \max)}_{1}$ or (D3) holds. In this case, we have $\tau_{1} = \theta^{(1, \edge)}_{1} = \theta^{(1, \cp)}_{1}$ and therefore $\tau_{1} < \theta^{(1, \max)}_{1}$ by the definition \eqn{tau} of $\tau_{1}$. By Lemma 4.2 of \cite{KobaMiyaZhao2010}, we already know that $A_{*}^{(1)}(e^{\theta^{(1, \edge)}_{1}})$ is positive recurrent. Furthermore, $\varphi_{2}(\theta_{2}) < \infty$ for $\theta_{2} < \tau_{2}$ and $\gamma_{1}(\vc{\theta}^{(1, \edge)}) = 1$. Hence, using the notation $\vc{\nu}^{(1)}_{n}$ of \sectn{Double}, we can verify all the conditions of Theorem 4.1 of \cite{MiyaZhao2004} since $\{A^{(k)}_{n}\}$ is 1-arithmetic. Thus, we get \eqn{reflecting 0}. 
  
  We next consider the case where (D2) with $\tau_{1} < \theta^{(1, \cp)}_{1}$ holds. We use the same idea which is applied to the double QBD process, a special case of the double $M/G/1$-type process, in \cite{Miya2009} (see Proposition 3.1 there and Lemma 2.2.1 of \cite{LiMiyaZhao2007}). However, the skip free condition for the reflecting process is crucial in the arguments of \cite{LiMiyaZhao2007,Miya2009}, which is not satisfied for the present model. Thus, we need some more ideas.
  
  Similar to the case (D3) for $k=1$, (D2) implies that $\tau_{2} = \theta_{2}^{(2, \edge)}$ and, for a positive constant $b^{(2)}_{\ell}$ for each $\ell \in \dd{Z}_{+}$,
\begin{eqnarray*}
  \lim_{i \to \infty} e^{\tau_{2} i} \nu(\ell, i) = b^{(2)}_{\ell}.
\end{eqnarray*}
   Furthermore, let $x_{i}$ be the $i$-th entry of the left invariant vector $\vc{x}$ of $A_{*}^{(1)}(e^{\tau_{1}})$, where $\tau_{2} = \ul{\xi}_{1}(\tau_{1})$, then it follows from Theorem A.1 and (A.11) of \cite{KobaMiyaZhao2010} that the complex variable generating function $x_{*}(z) \equiv \sum_{\ell=0}^{\infty} z^{\ell} x_{\ell}$ has a simple pole at $z = \tau_{2}$ and no other pole on $|z| = \tau_{2}$ because of the 1-arithmetic condition, and therefore, for some positive constant $c_{0}$,
\begin{eqnarray}
\label{eqn:x i convergence}
  \lim_{i \to \infty} e^{\tau_{2} i} x_{i} = c_{0}.
\end{eqnarray}
  Hence, we have
\begin{eqnarray}
\label{eqn:pi to x}
 && \lim_{i \to \infty} \vc{\nu}_{\ell}^{(1)}(i) x_{i}^{-1} = c_{0 }\lim_{i \to \infty} e^{\tau_{2} i} \nu(\ell, i) = c_{0} b^{(2)}_{\ell}, \qquad \ell \in \dd{Z}_{+}.
\end{eqnarray}

  Recall the Markov additive  process $\{\vc{Z}^{(1)}(\ell)\}$, which is introduced in \sectn{Prelimiaries}. We now change the measure of this process  using $\tau_{1}$ and $\vc{x}$ in such a way that the new kernel is defined as
\begin{eqnarray}
\label{eqn:twisted A 1}
  \tilde{A}^{(1)}_{n} =\Delta_{\vc{x}}^{-1} (e^{\tau_{1} n} A_{n}^{(1)})^{\rs{t}} \Delta_{\vc{x}}, \qquad n \ge -1,
\end{eqnarray}
  where $\Delta_{\vc{x}}$ is the diagonal matrix whose $i$-th diagonal entry is $x_{i}$. Obviously, $\tilde{A}^{(1)}_{n}(i,j)$ is a non-defective Markov additive kernel. We denote the Markov additive process with this kernel by $\{\tilde{\vc{Z}}^{(1)}(\ell)\} \equiv \{(\tilde{Z}^{(1)}_{1}(\ell), \tilde{Z}^{(1)}_{2}(\ell)) \}$. Let
\begin{eqnarray*}
  \tilde{A}^{(1)} \equiv \sum_{n=-1}^{\infty} \tilde{A}_{n}^{(1)},
\end{eqnarray*}
  which is the transition probability matrix of the background process $\{\tilde{Z}^{(1)}_{2}(n)\}$. By Lemma A.2 of \cite{KobaMiyaZhao2010}, $\tilde{A}^{(1)}$ must be transient because $\tau_{1} < \theta^{(1,\cp)}_{1} = \theta^{(1,\edge)}_{1}$. Under this change of measure, $R^{(1)}_{n}$ and $H^{(1)}_{n}$ are similarly changed to $\tilde{R}_{n}^{(1)}$ and $\tilde{H}_{n}^{(1)}$. Namely,
\begin{eqnarray*}
  \tilde{R}^{(1)}_{n} = \Delta_{\vc{x}}^{-1} (e^{\tau_{1} n} R_{n}^{(1)})^{\rs{t}} \Delta_{\vc{x}}, \quad n \ge 1, \qquad \tilde{H}^{(1)}_{n} = \Delta_{\vc{x}}^{-1} (e^{\tau_{1} n} H_{n}^{(1)})^{\rs{t}} \Delta_{\vc{x}} \quad n \ge 0.
\end{eqnarray*}

  It is notable that $\tilde{R}^{(1)} \equiv \sum_{n=1}^{\infty} \tilde{R}_{n}^{(1)}$ is stochastic because $\vc{x}$ is also the left invariant vector of $R^{(1)}_{*}(e^{\tau_{1}})$ by the Wiener-Hopf factorization \eqn{WHF 1}. For $n =1, 2, \ldots$, let $\tilde{\sigma}^{(1)}_{1}(0) = 0$ and let
\begin{eqnarray*}
  \tilde{\sigma}^{(1)}_{1}(n) = \inf \{\ell \ge \tilde{\sigma}^{(1)}_{1}(n-1); \tilde{Z}^{(1)}_{1}(\ell) -  \tilde{Z}^{(1)}_{1}(\tilde{\sigma}^{(1)}_{1}(n-1)) \ge 1 \},
\end{eqnarray*}
   then we can see from the version of \eqn{WHF 1} for $\{\tilde{A}_{n}\}$ that $\tilde{R}_{n}^{(1)}$ is the transition probability matrix at the first ascending ladder epoch, that is, its $(i,j)$ entry is given by
\begin{eqnarray*}
  \tilde{r}_{n}^{(1)}(i,j) = P( \tilde{Z}^{(1)}_{1}(\tilde{\sigma}^{(1)}_{1}(1)) - \tilde{Z}^{(1)}_{1}(0) = n, \tilde{Z}^{(1)}_{2}(\tilde{\sigma}^{(1)}_{1}(1)) = j, | \tilde{Z}^{(1)}_{2}(0) = i), \quad n \ge 1,
\end{eqnarray*}
  Because $\tilde{R}^{(1)}$ is stochastic, this implies that $\tilde{Z}_{1}(\tilde{\sigma}^{(1)}_{1}(n))$ drifts to $\infty$ as $n \to \infty$ with probability one. Let $\tilde{h}^{(1)}_{n}(i,j)$ be the $(i,j)$ entry of $\tilde{H}^{(1)}_{n}$. Since
\begin{eqnarray*}
  \tilde{h}^{(1)}_{n}(i,j) = 1(n=0) I + \sum_{\ell=1}^{n} \sum_{k=0}^{\infty} \tilde{r}^{(1)}_{\ell}(i,k) \tilde{h}^{(1)}_{n-\ell}(k,j),
\end{eqnarray*}
   we have, for $n \ge 1$,
\begin{eqnarray*}
  \lefteqn{\tilde{h}^{(1)}_{n}(i,j) = \sum_{\ell = 1}^{n} P( \tilde{Z}^{(1)}_{1}(\tilde{\sigma}^{(1)}_{1}(\ell)) - \tilde{Z}^{(1)}_{1}(0) = n, \tilde{Z}^{(1)}_{2}(\tilde{\sigma}^{(1)}_{1}(\ell)) = j | \tilde{Z}^{(1)}_{2}(0) = i)}\\
 && = P( \cup_{\ell=1}^{n} \{\tilde{Z}^{(1)}_{1}(\tilde{\sigma}^{(1)}_{1}(\ell)) - \tilde{Z}^{(1)}_{1}(0) = n\} \cap \{ \tilde{Z}^{(1)}_{2}(\tilde{\sigma}^{(1)}_{1}(\ell)) = j \} | \tilde{Z}^{(1)}_{2}(0) = i),
\end{eqnarray*}
where the second equality is obtained because $\tilde{Z}^{(1)}_{1}(\tilde{\sigma}^{(1)}_{1}(\ell))$ is increasing in $\ell$. Thus, $\tilde{h}^{(1)}_{n}(i,j)$ is obtained as the solution of the Markov renewal equation, which is uniformly bounded by unit. However, we can not apply the standard Markov renewal theorem because its background kernel $\tilde{A}^{(1)}$ is transient. Nevertheless, we can show that, for some constant $a > 0$,
\begin{eqnarray}
\label{eqn:H limit}
  \lim_{n \to \infty} \tilde{H}^{(1)}_{n} \vc{1} = \frac 1{a} \vc{1}.
\end{eqnarray}
  Intuitively, this may be obvious because both entries of $\tilde{\vc{Z}}^{(1)}(n)$ go to infinity as $n \to \infty$ and they behave like a random walk asymptotically when they get large. However, we need to prove \eqn{H limit}. Since this proof is quite technical, we defer it to \app{renewal theorem}.

  It follows from \eqn{renewal equation 2} using the vector row $\vc{y} = \{e^{-\tau_{2} \ell}; \ell \ge 0\}$ that
\begin{eqnarray}
\label{eqn:pi n 1}
  e^{\tau_{1} n} \vc{\nu}^{(1)}_{n} &\ge& \vc{\nu}^{(1)}_{1} \Delta_{\vc{x}}^{-1} \Delta_{\vc{x}} (e^{\tau_{1} n} H^{(1)}_{n-1}) \Delta_{\vc{x}}^{-1} \Delta_{\vc{x}} \nonumber\\
  &=& e^{\tau_{1}} \left(\Delta_{\vc{x}} \tilde{H}^{(1)}_{n-1} \Delta_{\vc{x}}^{-1} (\vc{\nu}^{(1)}_{1})^{\rs{t}} \right)^{\rs{t}}.
\end{eqnarray}
  Hence, applying the bounded convergence theorem, \eqn{x i convergence} and \eqn{H limit}, we get \eqn{reflecting 1}.
\end{proof}

\bigskip

  We are now ready to prove Theorems \thrt{decay rate 0} and \thrt{decay rate 1}.

\begin{proof*}{The proof of \thr{decay rate 0}}

  Because of symmetry, we only prove for $k=1$. The rough asymptotic \eqn{decay rate 0} is immediate from Lemmas \lemt{LD upper bound 1} and \lemt{LD lower bound 1}. The remaining part is also immediate from the second part of \lem{LD lower bound 1}.
\end{proof*}

\bigskip

\subsection{Proof of Theorem \thrt{decay rate 1}}
\label{sect:Theorem 3.3}
  
   We first consider where the ray $x \vc{c}$ with $x > 0$ intersects $\sr{D}$ in the $(\theta_{1},\theta_{2})$-plane. By \thr{domain D}, there are three cases:
\begin{mylist}{0}
\item [(a)] It intersects the vertical line $\theta_{1} = \tau_{1}$.
\item [(b)] It intersects the horizontal line $\theta_{2} = \tau_{2}$.
\item [(c)] It intersects $\partial \Gamma_{\max} \equiv \mbox{the boundary of } \Gamma_{\max}$ (see \fig{case (c)} below).
\end{mylist}
 (a) and (b) are symmetric, so we only need to consider cases (a) and (c).
 
  We first consider case (a). In this case, 
\begin{eqnarray*}
  \alpha_{\vc{c}} = \sqrt{\left(\tau_{1} \right)^{2} + \left(\frac {c_{2}} {c_{1}} \tau_{1}\right)^{2} } = \frac 1{c_{1}} \tau_{1},
\end{eqnarray*}
  because $\br{\vc{c}, \vc{c}} = 1$. On the other hand, it follows from \eqn{lower bound 5} that
\begin{eqnarray*}
  \liminf_{x \to \infty} \frac 1x \log P( \br{\vc{c}, \vc{L}} > x ) & \ge & \frac 1{c_{1}} \liminf_{x \to \infty} \frac {c_{1}} x \log P\left( L_{1} > \frac 1{c_{1}} x, L_{2} = 0 \right)\\
  & \ge & - \frac 1{c_{1}} \tau_{1}.
\end{eqnarray*}
  Hence, combining this with the upper bound \eqn{LD upper bound 2} of \lem{LD upper bound 1}, we have \eqn{Ldecay rate 1}.
  
  We next consider case (c). In this case, \eqn{Ldecay rate 1} is immediate from Lemmas \lemt{infinite domain} and \lemt{LD upper bound 1}.
\begin{figure}[h]
 	\centering
	\includegraphics[height=4.6cm]{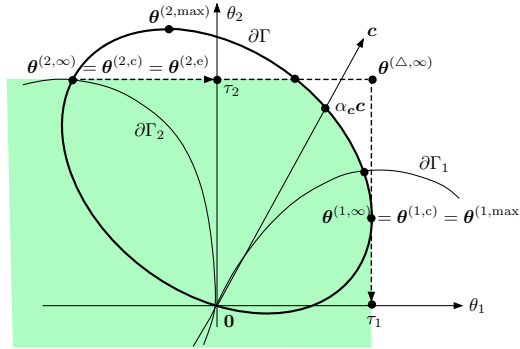}
	\caption{Typical figures for case (c)}
	\label{fig:case (c)}
\end{figure}
  
  It remains to prove \eqn{exact asymptotic 2}. For this, we consider a singular point of the analytic function  $\varphi(z \vc{c})$ obtained from the moment generating function of $\br{\vc{c}, \vc{L}}$. From \eqn{stationary equation 1},
\begin{eqnarray}
\label{eqn:analytic zc}
  (1 - \gamma(z \vc{c})) \varphi(z \vc{c}) = \sum_{k=1}^{2} (\gamma_{k}(z \vc{c}) - \gamma(z \vc{c})) \varphi_{k}(c_{k} z) + (\gamma_{0}(z \vc{c}) - \gamma(z \vc{c})) \varphi_{0}(0),
\end{eqnarray}
  for $(\Re z) \vc{c} \in \sr{D}$. From the assumptions on $\alpha_{\vc{c}}$, we first observe that the right hand side of \eqn{analytic zc} is analytic for $\Re z < \equiv \min( \frac 1 {c_{1}}\tau_{1}, \frac 1 {c_{2}}\tau_{2})$, which is greater than $\alpha_{\vc{c}}$ (see \fig{case (c)}).
  
  We next consider the root of equation $1 - \gamma(z \vc{c}) = 0$ for $\Re z = \alpha_{\vc{c}}$ in the complex number field. We first note that the root $z=\alpha_{\vc{c}}$ is simple because $\gamma(t \vc{c})$ is a convex function of $t \in \dd{R}$ and $\gamma(\vc{0}) = 1$. If $\br{\vc{c}, \vc{X}^{(+)}}$ is $\delta$-arithmetic for some $\delta > 0$, then $\br{\vc{c}, \vc{X}^{(+)}}/\delta$ is integer valued and $1$-arithmetic, and therefore the roots are of the form that $\alpha_{\vc{c}} + 2\pi i k/\delta$ for $k \in \dd{Z}$, where $i = \sqrt{-1}$. If there is no $\delta > 0$ for $\br{\vc{c}, \vc{X}^{(+)}}$ to be $\delta$-arithmetic, then we can see that there is no other root than $z = \alpha_{\vc{c}}$ because  $\gamma(z)$ is the function of $e^{z}$.
  
  We now consider the two cases separately. First assume that $\br{\vc{c}, \vc{X}^{(+)}}$ is $\delta$-arithmetic for some integer $\delta > 0$. Since $\br{\vc{c}, \vc{X}^{(+)}}/\delta$ is integer valued, we consider generating functions instead of moment generating functions. That is, we change complex variable $z$ to $w = e^{\delta z}$ in $\varphi(z \vc{c})$, $\varphi_{k}(c_{k} z)$, $\gamma(z\vc{c})$ and $\gamma_{k}(z\vc{c})$, and denote them, respectively, by $f(w)$, $f_{k}(w)$, $g(w)$ and $g_{k}(w)$, which are generating functions. Since the analytic properties of the original functions are transformed to these generating functions, we can see that $f(w)$ is analytic for $|w| < e^{\delta \alpha_{\vc{c}}}$, and there is no singular point on the circle $|w| = e^{\delta \alpha_{\vc{c}}}$ except for $w = e^{\delta \alpha_{\vc{c}}}$. Since the circle in the complex plane is compact and $g(w) = 1$ has a simple root at $w = e^{\delta \alpha_{\vc{c}}}$, we can analytically expand $f(w)$ given by
\begin{eqnarray*}
  f(w) = \frac 1{1 - g(w)} \left(\sum_{k=1}^{2} (g_{k}(w) - g(w)) f_{k}(w) + (g_{0}(w \vc{c}) - g(w)) \varphi_{0}(0) \right)
\end{eqnarray*}
to the region $\{w \in \dd{C}; |w| < e^{\delta \alpha_{\vc{c}}} + \epsilon \}$ for some $\epsilon > 0$ except for $w = e^{\delta \alpha_{\vc{c}}}$. Since $h$ must be singular at $w = e^{\delta \alpha_{\vc{c}}}$ and $1 - g(w) = 0$ has a single root there, $f(w)$ has a simple pole at $w = e^{\delta \alpha_{\vc{c}}}$. Hence, we can apply the asymptotic  inversion technique for a generating function of a 1-arithmetic distribution (e.g., see Theorem VI.5 of \cite{FlajSedg2009}). Thus, we have \eqn{exact asymptotic 2}.
  
  We now assume that $\br{\vc{c}, \vc{X}^{(+)}}$ is not $\delta$-arithmetic for any $\delta > 0$. In this case, we back to moment generating function. We already observed that there $\gamma(z \vc{c}) = 1$ has no other root for $\Re z = \alpha_{\vc{c}}$ than $z = \alpha_{\vc{c}}$ in the complex number field. Furthermore, the circle $\{e^{z} \in \dd{C}; \Re z = \alpha_{\vc{c}} \}$ is a compact set and $1 = |\gamma(z\vc{c})| \le \gamma(\Re z \vc{c})$. Hence, for any sequence of complex numbers $z_{n}$ for $n=1,2, \ldots$ such that $\gamma(z_{n} \vc{c}) = 1$, we have $\Re z_{n} > \alpha_{\vc{c}}$, and $\Re z_{n}$ can not converge to $\alpha_{\vc{c}}$ as $n \to \infty$. Namely, it were a converging sequence, then $\{e^{z_{n}} \in \dd{C}; n =1,2,\ldots \}$ has a converging subsequence, and therefore $\gamma(z \vc{c}) = 1$ for all $z \in \dd{C}$ by the uniqueness of analytic extension, which is a contradiction. This proves that $1 - \gamma(z \vc{c}) = 0$ has a single root at $z = \alpha_{\vc{c}}$ in the complex region $\{ z \in \dd{C}; 0 < \Re z < \alpha_{\vc{c}} + \epsilon\}$ for some $\epsilon > 0$. Hence, $\varphi(z \vc{c})$ is analytically extendable for $\Re z < \alpha_{\vc{c}} + \epsilon$ except at the simple pole at $z = \alpha_{\vc{c}}$. To this analytic function, we apply Lemma 6.2 of \cite{DaiMiya2013}, which is an adaptation of the asymptotic inversion due to Doetsch \cite{Doet1974}. Then, we can get \eqn{exact asymptotic 2}.

\section{Application to queueing networks}
\label{sect:Application}
\setnewcounter

  In this section, we consider a two node Markovian network with batch arrivals. This network generalizes the Jackson network so that each node may have batch arrivals at once. By applying \thr{decay rate 1}, it is easy to compute the decay rates at least numerically not only for this modification but also for further modification such that the service rates are changed when either one of the nodes is empty because they can be formulated as the double $M/G/1$-type process. It may be notable that special cases of those models have been studied in the literature (e.g, see \cite{FlatHahn1984,GuilKnesLeeu2013,GuilPinc2004,GuilLeeu2011,KobaMiya2012}). Exact asymptotics have been studied there, while \thr{decay rate 1} only answers rough asymptotics.
  
  The aim of this section is twofold. First, we consider the influence of the variability of the batch size distributions to the decay rates. Secondly, we examine whether the upper bound of \cite{MiyaTayl1997} for the stationary distribution is tight concerning the decay rate, where it is assumed that there is no simultaneous arrival as in \cite{MiyaTayl1997}. This tightness has been never considered.

\subsection{Two node Markovian network with batch arrivals}
\label{sect:Jackson}
  
Consider a continuous time Markovian network with two nodes, numbered as $1$ and $2$. We assume the following arrivals and service. Batches of customers arrive either at one of the two nodes or simultaneously at both nodes from outside, which we describe by the compound Poisson process with rate $\lambda > 0$ and joint batch size distribution $F$. The service times at node $k$ $(k = 1,2)$ are independent and have the common exponential distribution with mean $\mu_{k}^{-1}$, which are also independent of the batch arrivals. Because of this exponential assumption, service discipline is irrelevant as long as servers are busy. A customer who completes service at node $1$ moves to node $2$ with probability $p_{12}$, or leaves the network with $1-p_{12}$. Similarly, a departing customer from node $2$ goes to node $1$ with probability $p_{21}$, or outside with probability $1-p_{21}$. To exclude trivial cases, we assume
\begin{mylist}{3}
\item [(\sect{Application}a)] $0 < p_{12}< 1$ and $0 < p_{21}< 1$.
\end{mylist}
  We assume without loss of generality that $\lambda + \mu_{1} + \mu_{2} = 1$.
 
  Let $\vc{B} \equiv (B_{1}, B_{2})$ be a random vector subject to the joint batch size distribution $F$, and denote its moment generating function by $\hat{F}$. We assume
\begin{mylist}{3}
\item [(\sect{Application}b)] For each non-zero $\vc{\theta} \ge \vc{0}$, $\sup \{t > 0; \hat{F}(t\vc{\theta}) < \infty\} = \infty$.
\end{mylist}
   Thus, $F$ has light tails. The simplest model of this type is a two parallel queues with two simultaneous arrivals (no batch arrival at each node), whose exact tail asymptotics were studied by \cite{FlatHahn1984}. This result is recently generalized for a network but without batch arrival in \cite{KobaMiya2012}. The present batch arrival network is more general than these model, but we can only derive the decay rates except for some cases.

  Let $L_{tk}$ be the queue length of node $k$ at time $t$. Clearly, $(L_{t1}, L_{t2})$ is a continuous time Markov chain. We assume the intuitive stability condition:
\begin{eqnarray}
\label{eqn:stable}
\rho_{1} \equiv \frac{\lambda (b_{1} + b_{2} p_{21})}{(1-p_{12} p_{21})\mu_{1}} < 1 , \qquad \rho_{2} \equiv \frac {\lambda (b_{2} + b_{1} p_{12})} {(1-p_{12} p_{21})\mu_{2}} < 1,
\end{eqnarray}
  where $b_{k} = E(B_{k})$. One can verify that this condition is identical with the stability condition given in \lem{stability d=2} using fact that
\begin{eqnarray*}
 && \br{\vc{m}, \vc{m}^{(1)}_{\bot}} = \mu_{2}(\lambda (b_{1} + b_{2} p_{21}) - \mu_{1}(1 - p_{12} p_{21})),\\
 && \br{\vc{m}, \vc{m}^{(2)}_{\bot}} = \mu_{1}(\lambda (b_{2} + b_{1} p_{12}) - \mu_{2}(1 - p_{12} p_{21})).
\end{eqnarray*}
Thus, \eqn{stable} is indeed the stability condition for $(L_{t1}, L_{t2})$ to have the stationary distribution, which is denoted by $\nu$.
 
  By the well known uniformization, we reformulate the continuous time Markov chain $\{(L_{t1}, L_{t2}); t \ge 0\}$ as a discrete time one with the same stationary distribution $\nu$. This discrete time Markov chain is a double $M/G/1$-type process, so denoted by $\{(L_{1}(\ell), L_{2}(\ell)); \ell = 0,1,\ldots\}$.
 
  This queueing network is more general than the model studied in \cite{MiyaTayl1997} in the sense that simultaneous arrivals at both nodes may occur. If there is no simultaneous arrival at both nodes, then the model becomes a special case of the network of \cite{MiyaTayl1997} because batch departures are not allowed. We will consider this special case in \sectn{Proof T33} for considering the quality of the upper bound of \cite{MiyaTayl1997}.
  
  It is known that batch arrivals and/or simultaneous arrivals make it very hard to get the stationary distribution, while it is obvious if there is no such arrival. The latter network is the Jackson network, which has the stationary distribution of a product form as is well known.

\subsection{Influence of batch size distributions}
\label{sect:Influence}

  For the double $M/G/1$-type process for the batch arrival network, we compute $\gamma(\vc{\theta})$ and $\gamma_{k}(\vc{\theta})$ for $k=1,2$. 
\begin{eqnarray}
\label{eqn:batch gamma +}
  && \gamma(\vc{\theta}) = \lambda \hat{F} (\vc{\theta}) + \mu_{1} e^{-\theta_{1}} (1-p_{12}+ p_{12}e^{\theta_{2}}) + \mu_{2} e^{-\theta_{2}} (1-p_{21}+ p_{21}e^{\theta_{1}}), \hspace{5ex} \\
\label{eqn:batch gamma 1}
  && \gamma_{1}(\vc{\theta}) = \lambda \hat{F} (\vc{\theta}) + \mu_{1} e^{-\theta_{1}} (1-p_{12}+ p_{12}e^{\theta_{2}}) + \mu_{2},\\
\label{eqn:batch gamma 2}
  && \gamma_{2}(\vc{\theta}) = \lambda \hat{F} (\vc{\theta}) + \mu_{1} + \mu_{2} e^{-\theta_{2}} (1-p_{21}+ p_{21}e^{\theta_{1}}).
\end{eqnarray}

Hence, if the distribution of $\vc{B}$ is increased in linear convex order, then $\gamma(\vc{\theta})$, $\gamma_{1}(\vc{\theta})$ and $\gamma_{2}(\vc{\theta})$ are increased for each fixed $\vc{\theta} \in \dd{R}^{2}$ as long as they exist. Hence, \cor{monotonicity 2} yields the following fact.

\begin{proposition}
\label{pro:batch monotonicity}
  For the batch arrival Markovian network satisfying the conditions (\sect{Application}a), (\sect{Application}b) and \eqn{stable} if the distribution of $\vc{B}$ is increased in linear convex order, then the decay rates $\tau_{1}$, $\tau_{2}$ in \thr{decay rate 0} and $\alpha_{\vc{c}}$  in \thr{decay rate 1} are decreased.
\end{proposition}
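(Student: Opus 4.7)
The plan is to reduce the statement to Lemma \lemt{monotonicity 1}, which already does the analytic heavy lifting: it says that if a change of the primitive distributions increases $\gamma(\vc{\theta})$, $\gamma_{1}(\vc{\theta})$ and $\gamma_{2}(\vc{\theta})$ pointwise, then $\tau_{k}$ and $\alpha_{\vc{c}}$ decrease. The key structural observation is that in the explicit formulas \eqn{batch gamma +}, \eqn{batch gamma 1}, \eqn{batch gamma 2} the batch size distribution enters \emph{only} through the additive term $\lambda \hat{F}(\vc{\theta})$; every other summand depends solely on $\lambda,\mu_{1},\mu_{2},p_{12},p_{21}$ and $\vc{\theta}$, which are held fixed. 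So the whole question reduces to showing that $\vc{B} \le_{lcx} \vc{B}'$ forces $\hat{F}(\vc{\theta}) \le \hat{F}'(\vc{\theta})$ wherever both sides are finite.

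For this, I would unpack Definition \dent{linear cx}: $\vc{B} \le_{lcx} \vc{B}'$ means $\br{\vc{\theta},\vc{B}} \le_{cx} \br{\vc{\theta},\vc{B}'}$ for every $\vc{\theta} \in \dd{R}^{2}$; applying the convex test function $x \mapsto e^{x}$ immediately gives $E(e^{\br{\vc{\theta},\vc{B}}}) \le E(e^{\br{\vc{\theta},\vc{B}'}})$, which is exactly $\hat{F}(\vc{\theta}) \le \hat{F}'(\vc{\theta})$. Substituting this into \eqn{batch gamma +}--\eqn{batch gamma 2} yields the required pointwise inequalities $\gamma \le \gamma'$, $\gamma_{k} \le \gamma_{k}'$ for $k=1,2$, valid on the common domain of finiteness, which by assumption (\sect{Application}b) contains a neighborhood of the origin sufficient for Lemma \lemt{monotonicity 1}.

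Before invoking that lemma I also need to check that the stability condition \eqn{stable} is unaffected, so that both the old and the new model satisfy the hypotheses of Theorems \thrt{decay rate 0} and \thrt{decay rate 1}. Since the linear functions $x \mapsto x$ and $x \mapsto -x$ are both convex, the defining inequality of $\le_{cx}$ applied to $\pm B_{k}$ forces $E(B_{k}) = E(B_{k}')$, so $b_{1},b_{2}$ and hence $\rho_{1},\rho_{2}$ are preserved. With stability intact and the pointwise monotonicity of $\gamma,\gamma_{1},\gamma_{2}$ in hand, Lemma \lemt{monotonicity 1} (or equivalently Corollary \cort{monotonicity 2}, since the substitution $\vc{B} \le_{lcx} \vc{B}'$ lifts to $\vc{X}^{(+)} \le_{lcx} \vc{X}^{\prime(+)}$ and analogously on the faces, because the $\vc{X}^{(\cdot)}$ are convex combinations of $\vc{B}$ with fixed deterministic vectors) yields the conclusion. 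There is no real obstacle: the only point requiring care is making sure we are applying the monotonicity lemma on a domain where the moment generating functions are finite, which is guaranteed by (\sect{Application}b) and the fact that the added term $\lambda\hat{F}$ inherits its domain from $\hat{F}'$ of the larger distribution.
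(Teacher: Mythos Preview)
Your proposal is correct and follows essentially the same route as the paper: the paper simply notes that since $\hat{F}(\vc{\theta})$ enters $\gamma,\gamma_{1},\gamma_{2}$ additively and $e^{x}$ is convex, increasing $\vc{B}$ in linear convex order increases all three moment generating functions, and then invokes \cor{monotonicity 2} (which already records that stability is preserved via \eqn{linear cx}). Your write-up is slightly more detailed---spelling out the mean preservation $E(B_{k})=E(B_{k}')$ and the finiteness domain---but the argument is the same.
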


To get the decay rates, we need to find $\vc{\theta}^{(k, \edge)}$ and $\vc{\theta}^{(k, \max)}$ for $k=1,2$, which are the roots of the equations $\gamma(\vc{\theta}) = \gamma_{k}(\vc{\theta}) = 1$ and $\gamma(\vc{\theta}) = 1$ satisfying $\frac {d\theta_{k}} {d \theta_{3-k}} = 0$, respectively. We here note that $\gamma(\vc{\theta}) = \gamma_{1}(\vc{\theta}) = 1$ is equivalent to $\gamma_{1}(\vc{\theta}) = 1$ and
\begin{eqnarray*}
  e^{\theta_{2}} = 1-p_{21}+ p_{21}e^{\theta_{1}},
\end{eqnarray*}
  which follows from $\gamma(\vc{\theta}) = \gamma_{1}(\vc{\theta})$. Thus, the numerical values of the decay rates are easily computed using a software such as Mathematica, but their analytical expressions are very hard to get except for the skip free case. The model studied by \cite{FlatHahn1984} is the simplest skip-free case. Theorems \thrt{decay rate 0} and \thrt{decay rate 1} are fully compatible with their asymptotic results.

\subsection{Stochastic upper bound of Miyazawa and Taylor}
\label{sect:Proof T33}
  
  We next consider the stochastic upper bound for the stationary distribution $\nu$, obtained by Miyazawa and Taylor \cite{MiyaTayl1997}. Since their model does not allow simultaneous arrival, we assume that either one of $B_{1}$ and $B_{2}$ is zero. Thus, the joint batch size distribution $F$ can be written as
\begin{eqnarray*}
  F(x_{1}, x_{2}) = F(x_{1}, 0) + F(0, x_{2}), \qquad x_{1}, x_{2} \ge 0.
\end{eqnarray*}
  Let $F_{1}(x) = F(x, 0)/F(\infty, 0)$ and $F_{2}(x) = F(0, x)/F(0, \infty)$, and let $\lambda_{1} = \lambda F(\infty, 0)$ and $\lambda_{2} = \lambda F(0, \infty)$. For computational convenience, we switch to generating functions from moment generating functions. Let $\tilde{F}_{k}$ be the generating functions of $F_{k}$.  We present the upper bound of \cite{MiyaTayl1997} using our notation.

\begin{proposition}[Corollary 3.2 and Theorem 4.1 of \cite{MiyaTayl1997}]
\label{pro:stochastic bound}
  If (\sect{Application}a), (\sect{Application}b) and the stability condition \eqn{stable} hold, then the equations:
\begin{eqnarray}
\label{eqn:r1}
&&\lambda_{1} \left(\tilde{F}_{1}(s_{1}) - 1 \right) + \mu_{1} s_{1}^{-1} (1- s_{1})  = \mu_{2} p_{21} s_{2}^{-1} (1 - s_{1}),\\
\label{eqn:r2}
&& \lambda_{2} \left(\tilde{F}_{2}(s_{2}) - 1 \right) + \mu_{2} s_{2}^{-1} (1-s_{2})  = \mu_{1} p_{12} s_{1}^{-1} (1 - s_{2}),
\end{eqnarray}
  have solutions $(s_{1}, s_{2}) > \vc{0}$. Let $(h_{1}, h_{2})$ be the maximal solution among them, then,
\begin{eqnarray*}
\label{eqn:upeer bound}
P(\vc{L} \ge \vc{n}) \leq h_{1}^{ - n_{1}} h_{2}^{- n_2}, \qquad \vc{n} = (n_{1}, n_{2}) \ge \vc{0}, 
\end{eqnarray*}
where $\vc{L}$ is a random vector subject to the stationary distribution $\nu$.
\end{proposition}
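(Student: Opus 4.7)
My plan is to prove Proposition~\ref{pro:stochastic bound} in two independent stages, following the strategy of \cite{MiyaTayl1997} adapted to the batch-arrival setting above.

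First, I would establish the existence of the maximal positive solution $(h_1,h_2)$ of the system \eqn{r1}--\eqn{r2}. Observe that $(s_1,s_2)=(1,1)$ is always a trivial solution, so the content is to find a non-trivial solution with $s_1>1$, $s_2>1$. Rewriting \eqn{r1} and \eqn{r2} in the form
\begin{eqnarray*}
 && \lambda_1(\tilde F_1(s_1)-1) = \bigl(\mu_2 p_{21} s_2^{-1} - \mu_1 s_1^{-1}\bigr)(1-s_1),\\
 && \lambda_2(\tilde F_2(s_2)-1) = \bigl(\mu_1 p_{12} s_1^{-1} - \mu_2 s_2^{-1}\bigr)(1-s_2),
\end{eqnarray*}
I would solve each one for $s_{3-k}$ as an implicit function of $s_k$ on a neighborhood of $s_k=1$, verifying via the implicit function theorem that the derivative condition is nondegenerate precisely when the stability relations \eqn{stable} hold strictly. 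Assumption (\sect{Application}b) supplies analyticity of $\tilde F_k$ in the relevant strip, so each curve extends until it either hits the boundary of the convergence strip of $\tilde F_k$ or becomes vertical/horizontal; in either case a continuity/intermediate-value argument on the curves $\Gamma_1^{\mathrm{MT}}=\{s:\eqn{r1}\}$ and $\Gamma_2^{\mathrm{MT}}=\{s:\eqn{r2}\}$ in $(1,\infty)^2$ produces a non-trivial intersection. Taking the maximum of the resulting solutions (componentwise maximum over the intersection) gives $(h_1,h_2)$.

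Second, I would establish the stochastic dominance. The idea is to construct an auxiliary process $\{\tilde{\vc L}(\ell)\}$ in which the internal routing from each node to the other is replaced by an exogenous compound Poisson arrival at the receiving node with an inflated rate, chosen so that the coupled dominating process retains the original upward jumps but loses the downward coupling between coordinates. Concretely, in the dominating dynamics, a service completion at node~$k$ always removes a customer from node~$k$, and, independently, Poisson-generated “ghost'' arrivals at node~$3-k$ are appended to account for routed customers. A sample-path coupling, identical to that in Section~3 of \cite{MiyaTayl1997}, shows componentwise $\vc L(\ell)\le \tilde{\vc L}(\ell)$ in the stationary regime. The dominating process has independent coordinates that behave as $M/G/1$-type queues; its stationary distribution is geometric in each coordinate with parameters determined exactly by \eqn{r1}--\eqn{r2} being the balance equations for the marginal moment generating functions to equal unity, and this yields $P(\tilde{\vc L}\ge\vc n)=h_1^{-n_1}h_2^{-n_2}$. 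Combining the coupling with the choice of the maximal root $(h_1,h_2)$ (which corresponds to the slowest decay, hence the weakest, i.e.~valid, upper bound) gives the claim.

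The main obstacle is the first stage: showing the curves defined by \eqn{r1} and \eqn{r2} actually cross inside $(1,\infty)^2$ rather than escaping to the boundary of the analyticity strip of $\tilde F_k$. The argument has to combine the stability inequalities \eqn{stable} (which fix the sign of the implicit derivatives at $(1,1)$) with the light-tail condition (\sect{Application}b) (which guarantees sufficient room for the curves to meet), and one must rule out pathological configurations where only one coordinate blows up. Once this geometric step is carried out, the coupling step is essentially a direct transcription of the corresponding arguments in \cite{MiyaTayl1997}, the only novelty being the bookkeeping for batch arrivals, which enters linearly through $\tilde F_1$ and $\tilde F_2$ and therefore does not disturb the product-form structure of the dominating process.
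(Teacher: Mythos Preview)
The paper does not give a proof of this proposition: it is simply quoted from \cite{MiyaTayl1997} (Corollary~3.2 and Theorem~4.1 there), and is used only as input for the tightness discussion in \thr{Jackson tightness}. So there is no ``paper's own proof'' to compare against; your sketch is essentially an outline of the argument in \cite{MiyaTayl1997}, specialized to the two-node batch-arrival model, which is exactly the intended source.

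Two small points on your sketch itself. First, ``componentwise maximum over the intersection'' is not obviously well defined, since a priori the set of solutions of \eqn{r1}--\eqn{r2} in $(1,\infty)^2$ need not have a componentwise greatest element; in \cite{MiyaTayl1997} the maximal solution is identified via monotonicity of the two curves (each equation defines $s_{3-k}$ as a monotone function of $s_k$ on the relevant range), and that is what forces the intersection to be totally ordered. Second, the dominating process in \cite{MiyaTayl1997} is not literally two independent $M/G/1$ queues obtained by severing the routing; rather, it is a network whose routing is retained but whose batch sizes are inflated so that the stationary distribution becomes product-form geometric, with \eqn{r1}--\eqn{r2} arising as the fixed-point (traffic) equations for the geometric parameters. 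The coupling then compares the original network to this product-form network. Your description conflates this with a decoupling-by-ghost-arrivals construction; the conclusion is the same, but if you actually write out the proof you will find the product-form verification, not an independence argument, is what makes \eqn{r1}--\eqn{r2} appear.
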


  To compare $h_{k}$ with the decay rate, we let
\begin{eqnarray*}
  \eta_{k} = \log h_{k}, \qquad k=1,2.
\end{eqnarray*}
  By \thr{decay rate 1} and \pro{stochastic bound}, we have that $\eta_{k} \le \alpha_{k}$, where we recall that $\alpha_{k} = \alpha_{\vcn{e}_{k}}$.
  
  Let $s_{k} = e^{\theta_{k}}$ in \eqn{batch gamma +}, \eqn{batch gamma 1}, \eqn{batch gamma 2}, then $\gamma(\vc{\theta})$, $\gamma_{1}(\vc{\theta})$ and $\gamma_{2}(\vc{\theta})$ can be written as
\begin{eqnarray}
\label{eqn:mgf g}
  && \lambda_{1} \tilde{F}_{1}(s_{1}) + \mu_{1} s_{1}^{-1} (1-p_{12}+ p_{12} s_{2}) + \lambda_{2} \tilde{F}_{2}(s_{2}) + \mu_{2} s_{2}^{-1} (1-p_{21}+ p_{21}s_{1}) = 1, \hspace{5ex}\\
\label{eqn:mgf g1}
  && \lambda_{1} \tilde{F}_{1}(s_{1}) + \mu_{1} s_{1}^{-1} (1-p_{12}+ p_{12}s_{2}) + \lambda_{2} \tilde{F}_{2}(s_{2}) + \mu_{2} = 1,\\
\label{eqn:mgf g2}
  && \lambda_{1} \tilde{F}_{1}(s_{1}) + \mu_{1} + \lambda_{2} \tilde{F}_{2}(s_{2}) + \mu_{2} s_{2}^{-1} (1-p_{21}+ p_{21}s_{1}) = 1.
\end{eqnarray}

  Note that \eqn{r1} and \eqn{r2} imply \eqn{mgf g}. That is, $(h_{1}, h_{2})$ satisfies the equation \eqn{mgf g} for variable $(s_{1},s_{2})$. In other words, the point $(\eta_{1}, \eta_{2})$ is on the curve $\partial \Gamma$.
  
  Our question is when $(\eta_{1}, \eta_{2})$ is identical with $(\alpha_{1}, \alpha_{2})$, that is, when the upper bounds agree with the decay rates of the marginal distributions in coordinate directions. We answer it by the following theorem.

\begin{theorem}
\label{thr:Jackson tightness}
  Under (\sect{Application}a), (\sect{Application}b) and the stability condition \eqn{stable}, the decay rate $\eta_{k} \equiv \log {t}_{k}$ of the stochastic upper bound of \cite{MiyaTayl1997} is identical with the decay rate $\alpha_{k}$ for $k=1,2$ if and only if both nodes have no batch arrival.
  
\begin{proof}
  The sufficiency of the single arrivals is immediate from the well known product form solution for the Jackson network. Thus, we only need to prove the necessity. Assume that $(\eta_{1}, \eta_{2}) = (\alpha_{1}, \alpha_{2})$. As we already observed, this implies that  $(\alpha_{1}, \alpha_{2}) \in \partial \Gamma$. Hence, from \thr{decay rate 1} and \eqn{alpha k}, we can see that neither $\alpha_{1} = \beta_{1}$ nor $\alpha_{2} = \beta_{2}$ is possible because $\alpha_{k} = \beta_{k}$ implies that $\alpha_{2-k} = 0$, where $\beta_{k}$ is defined after \eqn{alpha k}. Furthermore, we cannot simultaneously have $\alpha_{1} = \theta^{(1, \max)}_{1}$ and $\alpha_{2} = \theta^{(2, \max)}_{2}$ because of $(\alpha_{1}, \alpha_{2}) \in \partial \Gamma$. Hence, we must have either $\alpha_{1} = \theta^{(1, \edge)}_{1}$ or $\alpha_{2} = \theta^{(2, \edge)}_{2}$.
  
  Suppose that $\alpha_{1} = \theta^{(1, \edge)}_{1}$, which implies that $\eta_{1} =  \theta^{(1, \edge)}_{1}$ by our assumption. For convenience, we introduce notations:
\begin{eqnarray*}
  t^{(k,\edge)}_{i} = e^{\theta^{(k, \edge)}_{i}}, \qquad i, k = 1,2.
\end{eqnarray*}
  Then, $\eta_{1} =  \theta^{(1, \edge)}_{1}$ is equivalent to $h_{1} =  t^{(1, \edge)}_{1}$, and $(t^{(1, \edge)}_{1}, t^{(1, \edge)}_{2})$ is the solution of the equations \eqn{mgf g} and \eqn{mgf g1} for variable $(s_{1}, s_{2})$. Thus, it follows from \eqn{mgf g} and \eqn{mgf g1} that
\begin{eqnarray}
\label{eqn:t 12}
   t^{(1, \edge)}_{2} = 1 - p_{21}+ p_{21} t^{(1, \edge)}_{1}.
\end{eqnarray}
  Substituting \eqn{t 12} into \eqn{mgf g1} with $(s_{1}, s_{2}) = (t^{(1, \edge)}_{1}, t^{(1, \edge)}_{2})$, we have
\begin{eqnarray*}
  \lambda_{1} \left(\tilde{F}_{1}(t^{(1, \edge)}_{1}) -1 \right) + \lambda_{2} \left(\tilde{F}_{2}(1 - p_{21}+ p_{21} t^{(1, \edge)}_{1}) -1 \right) + \mu_{1} (1 - p_{12} p_{21})( (t^{(1, \edge)}_{1})^{-1} - 1) = 0.
\end{eqnarray*}
  Thus, $t^{(1, \edge)}_{1}$ is obtained as the unique positive solution of this equation. Since $h_{1} = t^{(1, \edge)}_{1}$, this implies that
\begin{eqnarray}
\label{eqn:batch 3}
  \lambda_{1} \left(\tilde{F}_{1}(h_{1}) -1 \right) + \lambda_{2} \left(\tilde{F}_{2}(1 - p_{21}+ p_{21}h_{1}) -1 \right) + \mu_{1} (1 - p_{12} p_{21})(h_{1}^{-1} - 1) = 0.
\end{eqnarray}

  From \eqn{r1} with $(s_{1}, s_{2}) = (h_{1}, h_{2})$ and \eqn{batch 3}, we have
\begin{eqnarray*}
  \lambda_{2} \tilde{F}_{2}( 1 - p_{21}+ p_{21} h_{1}) = \mu_{1} p_{12} p_{21} h_{1}^{-1} (1 - h_{1}) - \mu_{2} p_{21} h_{2}^{-1} (1- h_{1}).
\end{eqnarray*}
  This yields
\begin{eqnarray*}
  \lambda_{2} \frac {\tilde{F}_{2}(1 - p_{21}+ p_{21} h_{1}) - 1} {(1 - p_{21}+ p_{21} h_{1}) - 1} = \mu_{2} h_{2}^{-1} - \mu_{1} p_{12} h_{1}^{-1} .
\end{eqnarray*}
  On the other hand, it follows from \eqn{r2} with $(s_{1}, s_{2}) = (h_{1}, h_{2})$ that
\begin{eqnarray*}
  \lambda_{2} \frac {\tilde{F}_{2}(h_{2}) - 1} {h_{2} - 1} = \mu_{2} h_{2}^{-1} - \mu_{1} p_{12} h_{1}^{-1} .
\end{eqnarray*}
  Hence, we must have
\begin{eqnarray*}
  \frac {\tilde{F}_{2}(1 - p_{21}+ p_{21} h_{1}) - 1} {(1 - p_{21}+ p_{21} h_{1}) - 1} = \frac {\tilde{F}_{2}(h_{2}) - 1} {h_{2} - 1}.
\end{eqnarray*}
  Since $\tilde{F}_{2}(s)$ is a strictly increasing convex function, this equation is true only when either $\tilde{F}_{2}(s) = s$, which is equivalent to no batch arrivals at node $2$, or 
\begin{eqnarray}
\label{eqn:batch 4a}
  h_{2} = 1 - p_{21}+ p_{21} h_{1}.
\end{eqnarray}

  Suppose that node 2 has batch arrivals. Then, \eqn{batch 4a} holds, and therefore \eqn{t 12} implies $h_{2} = t^{(1, \edge)}_{2}$. This is equivalent to that $(\eta_{1}, \eta_{2}) = \vc{\theta}^{(1, \edge)}$. Hence, the assumption that $(\eta_{1}, \eta_{2}) = (\alpha_{1}, \alpha_{2})$ implies that we must have the case (D2), which in turn implies that $h_{2} = t^{(1, \edge)}_{2} = t^{(2, \edge)}_{2}$. Hence, by the same arguments, we have either $\tilde{F}_{1}(s) = s$ or 
\begin{eqnarray}
\label{eqn:batch 4b}
  h_{1} = 1 - p_{12}+ p_{12} h_{2}.
\end{eqnarray}
  However, if both of \eqn{batch 4a} and \eqn{batch 4b} hold, then $h_{1} = h_{2} = 1$, which is impossible. Hence, we must have that $\tilde{F}_{1}(s) = s$, that is, node $1$ has no batch arrivals.
  
  Applying $\tilde{F}_{1}(h_{1}) = h_{1}$ to \eqn{mgf g1} with $(s_{1}, s_{2}) = (h_{1}, h_{2})$ and using the fact that $h_{1} > 1$, we have
\begin{eqnarray}
\label{eqn:h 1}
  \mu_{1} h_{1}^{-1} = \lambda_{1} + \mu_{2} p_{21} h_{2}^{-1} .
\end{eqnarray}
  On the other hand, from \eqn{r2} with $(s_{1}, s_{2}) = (h_{1}, h_{2})$ and \eqn{batch 4a}, we have
\begin{eqnarray*}
  \lefteqn{\lambda_{2} \left(\tilde{F}_{2}(h_{2}) - 1 \right) = \mu_{1} p_{12} h_{1}^{-1} (1 - h_{2}) - \mu_{2} h_{2}^{-1} (1-h_{2}) } \hspace{10ex}\\
 && = (1 - h_{2}) \left(\mu_{1} h_{1}^{-1}  p_{12}- \mu_{2} h_{2}^{-1} \right)\\
 && = (1 - h_{2}) \left( (\lambda_{1} + \mu_{2} p_{21} h_{2}^{-1}) p_{12}- \mu_{2} h_{2}^{-1} \right),
\end{eqnarray*}
  where the last equality is obtained by substituting \eqn{h 1}. Rearranging terms in this equation and recalling that $\rho_{2} = (\lambda_{1} + \lambda_{2} p_{12})/((1-p_{12} p_{21}) \mu_{1})$, we have
\begin{eqnarray}
\label{eqn:h 2}
  \lambda_{2} \left(\tilde{F}_{2}(h_{2}) - h_{2} \right) &=& (1 - h_{2}) \left( (\lambda_{2} + \lambda_{1} p_{12}) - (1 - p_{21} p_{12}) \mu_{2} h_{2}^{-1} \right) \nonumber\\
 & = & (1 - h_{2}) (1 - p_{21} p_{12}) \mu_{2} \left( \rho_{2} - h_{2}^{-1} \right).
\end{eqnarray}
  Since this $\rho_{2}$ is identical with the geometric decay rate of the Jackson network with single arrivals at both nodes, we obviously have $\rho_{2} \le h_{2}^{-1}$. Thus, the right-hand side of \eqn{h 2} is not positive, but its left-hand side must be positive because node 2 has batch arrivals, which implies $\tilde{F}_{2}(h_{2}) - h_{2} > 0$. This is a contradiction, and node 2 can not have batch arrivals. This concludes that both nodes do not have batch arrivals. By symmetry, we have the same conclusion for $\alpha_{2} = \theta^{(2,\edge)}_{2}$. Hence, we have completed the proof.
\end{proof}
\end{theorem}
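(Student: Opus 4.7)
The plan is to prove the ``only if'' direction by contradiction, exploiting the geometry of $\partial\Gamma$ and the classification in \thr{decay rate 1}. Sufficiency is immediate: when $\tilde F_k(s)=s$ for both $k$, the network reduces to a Jackson network with product-form stationary distribution, and the equations \eqn{r1}--\eqn{r2} collapse to the equations determining the geometric product-form decay rates, so $\eta_k=\alpha_k=-\log\rho_k$.

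For necessity, I first observe that adding \eqn{r1} and \eqn{r2} reproduces \eqn{mgf g}, so the point $(\eta_1,\eta_2)$ always lies on $\partial\Gamma$. Under the hypothesis $(\eta_1,\eta_2)=(\alpha_1,\alpha_2)$, the case analysis in \eqn{alpha k} rules out $\alpha_k=\beta_k$ (which would force $\alpha_{3-k}=0$ and so move the point off $\partial\Gamma$), and the two maxima $\theta^{(1,\max)}_1$ and $\theta^{(2,\max)}_2$ cannot be simultaneously attained on $\partial\Gamma$. Hence by symmetry I may assume $\alpha_1=\theta^{(1,\edge)}_1$, so that $(\eta_1,\eta_2)=\vc{\theta}^{(1,\edge)}$ satisfies both $\gamma=1$ and $\gamma_1=1$. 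Subtracting \eqn{mgf g1} from \eqn{mgf g} at $(h_1,h_2)$ forces the algebraic identity $h_2=1-p_{21}+p_{21}h_1$, and substituting it back into \eqn{mgf g1} produces a single scalar equation that pins $h_1$ uniquely.

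With $h_1$ pinned, I would use \eqn{r1} together with that scalar equation to express $\lambda_2\tilde F_2(1-p_{21}+p_{21}h_1)$, divide by $p_{21}(h_1-1)=(1-p_{21}+p_{21}h_1)-1$, and compare the resulting secant of $\tilde F_2$ over $[1,\,1-p_{21}+p_{21}h_1]$ with the secant $(\tilde F_2(h_2)-1)/(h_2-1)$ extracted from \eqn{r2}. Strict convexity and strict monotonicity of $\tilde F_2$ imply that the equality of two such secants can hold only when either $\tilde F_2(s)=s$ (no batch at node~2) or $h_2=1-p_{21}+p_{21}h_1$. In the latter sub-case the configuration must be (D2), so by symmetry the same manoeuvre with the roles of $1$ and $2$ swapped yields either $\tilde F_1(s)=s$ or $h_1=1-p_{12}+p_{12}h_2$; the two linear identities together force $h_1=h_2=1$, which is excluded by the strict stability assumption \eqn{stable}. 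Therefore at least one of the two nodes has no batch arrivals.

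To upgrade ``at least one'' to ``both'', I would suppose $\tilde F_1(h_1)=h_1$; substitution into \eqn{mgf g1} yields $\mu_1h_1^{-1}=\lambda_1+\mu_2p_{21}h_2^{-1}$, and feeding this back into \eqn{r2} at $(h_1,h_2)$ with the relation $h_2=1-p_{21}+p_{21}h_1$ and simplifying gives, after rearrangement,
\[
  \lambda_{2}\bigl(\tilde F_{2}(h_{2})-h_{2}\bigr)=(1-h_{2})(1-p_{21}p_{12})\mu_{2}\bigl(\rho_{2}-h_{2}^{-1}\bigr),
\]
where $\rho_2$ is the geometric decay rate of the pure single-arrival Jackson specialisation at node~2. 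The main obstacle is the inequality $\rho_2\le h_2^{-1}$: I would obtain it by combining the tail-probability consequence $\log h_2\le\alpha_2$ of \pro{stochastic bound} with the monotonicity statement \cor{monotonicity 2}, which via linear convex order gives $\alpha_2^{\text{batch}}\le\alpha_2^{\text{Jackson}}=-\log\rho_2$. Once this bound is in hand the right-hand side of the displayed equation is nonpositive, whereas strict convexity of $\tilde F_2$ together with $\tilde F_2(1)=1$ makes its left-hand side strictly positive whenever node~2 still carries batch arrivals. The resulting contradiction forces $\tilde F_2(s)=s$ as well, completing the proof.
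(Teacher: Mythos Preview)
Your overall architecture matches the paper's proof, but there is a genuine logical gap at the very step you flag as routine. From $\alpha_1=\theta^{(1,\edge)}_1$ you only get $h_1=e^{\theta^{(1,\edge)}_1}$; you do \emph{not} yet know that $(\eta_1,\eta_2)=\vc{\theta}^{(1,\edge)}$, and hence you cannot evaluate \eqn{mgf g1} at $(h_1,h_2)$. The point $(h_1,h_2)$ is known to lie on $\partial\Gamma$ (i.e.\ it satisfies \eqn{mgf g}), but nothing so far puts it on $\partial\Gamma_1$. So your subtraction ``at $(h_1,h_2)$'' that produces $h_2=1-p_{21}+p_{21}h_1$ is unjustified, and this makes the subsequent secant dichotomy vacuous: you have already assumed the very relation that one branch of the dichotomy is supposed to deliver.

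The paper's fix is exactly the one you need: perform the subtraction at the edge point $(t^{(1,\edge)}_1,t^{(1,\edge)}_2)$, which by definition satisfies both \eqn{mgf g} and \eqn{mgf g1}, to obtain $t^{(1,\edge)}_2=1-p_{21}+p_{21}t^{(1,\edge)}_1$ and hence a scalar equation in $t^{(1,\edge)}_1$ alone; then transfer this equation to $h_1$ via $h_1=t^{(1,\edge)}_1$. Only \emph{after} the secant comparison does the relation $h_2=1-p_{21}+p_{21}h_1$ emerge (in one branch), and only then is $(\eta_1,\eta_2)=\vc{\theta}^{(1,\edge)}$ legitimately established, forcing (D2) and permitting the symmetric argument. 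A secondary point: your justification of $\rho_2\le h_2^{-1}$ via \cor{monotonicity 2} is shaky, since replacing the batch law of $\vc{B}$ by a single-arrival law generally changes the mean and is therefore not a linear-convex-order comparison; the paper simply invokes that $\rho_2$ is the actual Jackson decay rate for the single-arrival specialisation and that the Miyazawa--Taylor bound is exact there.
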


  This theorem shows that the stochastic bound of \cite{MiyaTayl1997} cannot be tight even for the decay rates. However, this may not exclude the case where one of the upper bounds is tight. In our proof, the tightness leads to a contradiction if either node 2 has batch arrivals or (D2) holds. This suggests that, if node 2 has no batch arrivals and if (D2) does not hold, then node 1 with $\eta_{1} = \alpha_{1}$ may have batch arrivals. The following corollary affirmatively answers it.
  
\begin{corollary}
\label{cor:Jackson tightness}
  Under the same assumptions of \thr{Jackson tightness}, $\eta_{1} = \alpha_{1}$ holds if there is no batch arrival at node 2 and either (D1) with $\vc{\theta}^{(1,\max)} \not\in \ol{\Gamma}_{1}$ or (D3) holds. These conditions are also necessary for $\eta_{1} = \alpha_{1}$ if node 1 has batch arrivals. Similarly, $\eta_{2} = \alpha_{2}$ holds if and only if there is no batch arrival at node 1 and either (D1) with $\vc{\theta}^{(2,\max)} \not\in \ol{\Gamma}_{2}$ or (D2) holds. These conditions are also necessary if node 2 has batch arrivals.
\end{corollary}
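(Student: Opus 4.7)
I split the argument into sufficiency and necessity for the claim $\eta_{1}=\alpha_{1}$; the statement for $\eta_{2}=\alpha_{2}$ follows by interchanging the roles of nodes $1$ and $2$. The key tool throughout is the relationship between the Miyazawa--Taylor fixed-point equations \eqn{r1}--\eqn{r2} and the edge equation \eqn{batch 3} established in the proof of \thr{Jackson tightness}.

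For the sufficiency, assume $\tilde{F}_{2}(s)=s$ (no batch at node $2$) together with either (D1) with $\vc{\theta}^{(1,\max)}\not\in\ol{\Gamma}_{1}$ or (D3). In either case, \thr{decay rate 1} combined with \eqn{alpha k} gives $\alpha_{1}=\theta_{1}^{(1,\edge)}$, so it suffices to show $h_{1}=t_{1}^{(1,\edge)}$. Substituting $\tilde{F}_{2}(s)=s$ into \eqn{r2} and factoring out $s_{2}-1\ne 0$ yields $\mu_{2}s_{2}^{-1}=\lambda_{2}+\mu_{1}p_{12}s_{1}^{-1}$, and substituting this expression for $s_{2}^{-1}$ into \eqn{r1} and dividing through by $s_{1}-1$ produces the scalar equation
\[
\lambda_{1}\frac{\tilde{F}_{1}(s_{1})-1}{s_{1}-1}=\mu_{1}s_{1}^{-1}(1-p_{12}p_{21})-\lambda_{2}p_{21}.
\]
Independently, specializing \eqn{batch 3} to $\tilde{F}_{2}(s)=s$ and simplifying $\tilde{F}_{2}(1-p_{21}+p_{21}s_{1})-1=p_{21}(s_{1}-1)$ produces exactly the same equation after division by $s_{1}-1$. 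Its left side is non-decreasing and its right side strictly decreasing in $s_{1}>1$, so the equation has a unique positive root. Hence $h_{1}=t_{1}^{(1,\edge)}$ and $\eta_{1}=\theta_{1}^{(1,\edge)}=\alpha_{1}$.

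For the necessity, assume node $1$ has batch arrivals and $\eta_{1}=\alpha_{1}$. I first argue that the case conditions hold, equivalently $\alpha_{1}=\theta_{1}^{(1,\edge)}$. If $\alpha_{1}$ were $\theta_{1}^{(1,\max)}$ (case (D1) with $\vc{\theta}^{(1,\max)}\in\ol{\Gamma}_{1}$), $\ol{\xi}_{1}(\theta_{2}^{(2,\cp)})$ (case (D2)), or $\beta_{1}$ (the sub-case $\ol{\xi}_{2}(\tau_{1})<0$), then $(\eta_{1},\eta_{2})\in\partial\Gamma$ forces $(h_{1},h_{2})$ to coincide with the corresponding extreme point of $\partial\Gamma$; evaluating \eqn{r1}--\eqn{r2} there produces a pair of linear identities that are incompatible with the strict convexity of $\tilde{F}_{1}$ (from node $1$'s batches). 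Hence $\alpha_{1}=\theta_{1}^{(1,\edge)}$, so $h_{1}=t_{1}^{(1,\edge)}$, and the $\tilde{F}_{2}$ comparison from the proof of \thr{Jackson tightness} yields either $\tilde{F}_{2}(s)=s$ (the desired conclusion) or \eqn{batch 4a}. In the latter sub-case $(h_{1},h_{2})=\vc{\theta}^{(1,\edge)}$; I then repeat the computation leading to \eqn{h 2} in the proof of \thr{Jackson tightness}, replacing the role of $\tilde{F}_{1}(h_{1})=h_{1}$ by the strict inequality $\mu_{1}h_{1}^{-1}>\lambda_{1}+\mu_{2}p_{21}h_{2}^{-1}$ extracted from \eqn{r1} using strict convexity of $\tilde{F}_{1}$. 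Combining this with the bound $\rho_{2}\le h_{2}^{-1}$, which follows from \cor{monotonicity 2} applied against the corresponding Jackson reference, yields a sign contradiction with the strict positivity of the left-hand side of \eqn{h 2} (from node $2$'s batches).

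The main obstacle is precisely this final sign contradiction in the necessity direction. In \thr{Jackson tightness}, the case (D2) symmetry was used to first deduce $\tilde{F}_{1}(s)=s$ before invoking \eqn{h 2}; here node $1$ has batch arrivals by hypothesis, so that deduction is unavailable, and the contradiction must instead be extracted directly from the strict inequality in \eqn{r1}. Verifying that this inequality is sharp enough to reverse the sign of the right-hand side of the analog of \eqn{h 2} obtained at $(h_{1},h_{2})=\vc{\theta}^{(1,\edge)}$ will be the delicate computation; additionally, the case analysis ruling out $\alpha_{1}\in\{\theta_{1}^{(1,\max)},\ol{\xi}_{1}(\theta_{2}^{(2,\cp)}),\beta_{1}\}$ requires verifying that each alternative extreme point of $\partial\Gamma$ is genuinely inconsistent with the Miyazawa--Taylor system under strictly convex $\tilde{F}_{1}$.
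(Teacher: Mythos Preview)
Your sufficiency argument is correct and matches the paper's: both reduce \eqn{r1}--\eqn{r2} under $\tilde{F}_{2}(s)=s$ to a single scalar equation in $s_{1}$, observe that the identical equation arises from $\gamma=\gamma_{1}=1$ (equations \eqn{mgf g} and \eqn{mgf g1}), and conclude $h_{1}=t_{1}^{(1,\edge)}$ by uniqueness of the root; the case hypothesis (D1) with $\vc{\theta}^{(1,\max)}\notin\ol{\Gamma}_{1}$ or (D3) then gives $\alpha_{1}=\theta_{1}^{(1,\edge)}$.

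For necessity the paper takes a much shorter route than you do, and the two places where you flag difficulties are exactly where you over-complicate matters. The paper does not perform a fresh case analysis to exclude $\alpha_{1}\in\{\theta_{1}^{(1,\max)},\ol{\xi}_{1}(\theta_{2}^{(2,\cp)}),\beta_{1}\}$, nor does it attempt your ``delicate'' strict-inequality replay of \eqn{h 1}--\eqn{h 2}. Instead, after recording that $\eta_{1}=\alpha_{1}$ forces $\alpha_{1}=\theta_{1}^{(1,\edge)}$, it simply reads the proof of \thr{Jackson tightness} in contrapositive form: that proof showed, starting from $h_{1}=t_{1}^{(1,\edge)}$, that if node~2 has batch arrivals then one is driven (via \eqn{batch 4a} and the (D2) configuration) to $\tilde{F}_{1}(s)=s$. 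Hence under the hypothesis ``node~1 has batch arrivals'' the contrapositive immediately yields both ``node~2 has no batch arrivals'' and ``not (D2)''. The remaining classification---(D1) with $\vc{\theta}^{(1,\max)}\notin\ol{\Gamma}_{1}$ or (D3)---then drops out from $\alpha_{1}=\theta_{1}^{(1,\edge)}$ together with ``not (D2)''.

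So the genuine gap in your proposal is strategic rather than technical: you try to rebuild from scratch a contradiction that is already on record. Your proposed exclusion of the alternative values of $\alpha_{1}$ via ``linear identities incompatible with strict convexity of $\tilde{F}_{1}$'' is vague and would need substantial work to make rigorous, and your plan to sharpen \eqn{h 1} to a strict inequality and propagate it through \eqn{h 2} is unnecessary---the equality version of that computation, carried out once in \thr{Jackson tightness}, already delivers the needed contrapositive.
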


\begin{proof}
  By symmetry, we only need to prove the first two claims. If there is no batch arrival at node 2, then $(h_{1}, h_{2})$ is obtained as the solution of the equations \eqn{r1} and \eqn{r2} with $\tilde{F}_{2}(s_{2}) = s_{2}$, that is,
\begin{eqnarray}
\label{eqn:r1 t}
&&\lambda_{1} \left(\tilde{F}_{1}(s_{1}) - 1 \right) + \mu_{1} s_{1}^{-1} (1- h_{1})  = \mu_{2} p_{21} s_{2}^{-1} (1 - s_{1}),\\
\label{eqn:r2 t}
&& \mu_{2} s_{2}^{-1}  = \lambda_{2} + \mu_{1} p_{12} s_{1}^{-1}.
\end{eqnarray}
 Substituting $\mu_{2} s_{2}^{-1}$ of \eqn{r2 t} into \eqn{r1 t} implies
\begin{eqnarray}
\label{eqn:batch 5}
 \qquad \lambda_{1} \left(\tilde{F}_{1}(s_{1}) -1 \right) - \lambda_{2} p_{21} \left(1 - s_{1} \right) + \mu_{1} (1 - p_{12} p_{21})(1 - s_{1}) s_{1}^{-1} = 0. \hspace{-5ex}
\end{eqnarray}
  On the other hand, this equation also follows from \eqn{mgf g}, \eqn{mgf g1} and the single arrivals at node 2, and therefore we have $h_{1} = t^{(1, \edge)}_{1}$, equivalently, $\eta_{1} = \theta^{(1, \edge)}_{1}$. Hence, $\eta_{1} = \alpha_{1}$ holds if either (D1) with $\vc{\theta}^{(1,\max)} \not\in \ol{\Gamma}_{1}$ or (D3) holds, because these conditions imply $\alpha_{1} = \theta^{(1,\edge)}_{1}$. This proves the first claim. To prove the necessity, we note that $\eta_{1} = \alpha_{1}$ implies $\alpha_{1} = \theta^{(1,\edge)}_{1}$. Assume that node 1 has batch arrivals, then neither the batch arrivals at node 2 nor (D2) is possible as shown in the proof of \thr{Jackson tightness}. Hence, it is required that node 2 has no batch arrivals and (D2) does not holds. The latter together with $\alpha_{1} = \theta^{(1,\edge)}_{1}$ implies that either (D1) with $\vc{\theta}^{(1,\max)} \not\in \ol{\Gamma}_{1}$ or (D3) holds. Thus, the second claim is proved.
\end{proof}
  
\section{Concluding remarks}
\label{sect:Concluding}
\setnewcounter

  In this paper, we have studied the tail decay asymptotics of the marginal stationary distributions for an arbitrary direction under conditions (i)--(iv) and the stability condition. Among these conditions, (i) is most restrictive for applications. For example, it excludes a priority queue with two classes of customers. However, it can be relaxed as remarked in Section 7 of \cite{Miya2009}. Hence, (i) is not a crucial restriction. As we already noted, condition (iii) can also be relaxed to (iii)' for obtaining the decay rate. Thus, the rough tail asymptotics can be obtained under the minimum requisites.
  
  What we have not studied in this paper is of other types of asymptotic behaviors of the stationary distribution. In particular, we have not fully studied exact asymptotics. We have recently studied this problem for the skip free reflecting random walk in \cite{KobaMiya2012}. For the unbounded jump case, this is a challenging problem. We hope the convergence domain which we obtained in this paper may be helpful for the kernel method as it proved to be in \cite{KobaMiya2012}.
  
\vspace{3ex}

\appendix
\setcounter{section}{0}
\setnewcounter

\noindent {\bf \Large Appendix}\vspace{-1ex}

\section{Proof of \lem{geometric 1}}
\label{app:geometric 1}

  We first claim that, for each $k=1,2$, there is a $\vc{\theta} \in \Gamma \cap \Gamma_{k}$ such that $\theta_{k} > 0$ if and only if
\begin{eqnarray}
\label{eqn:theta mean 1}
  \br{\vc{\theta}, \vc{m}} < 0, \qquad \br{\vc{\theta}, \vc{m}^{(k)}} < 0 \quad \mbox{and} \quad \theta_{k} > 0.
\end{eqnarray}
 Because of symmetry, we only prove this for $k=1$. Define functions $f$ and $f_{1}$ as
\begin{eqnarray*}
  f(u) = E(e^{u \br{\vc{\theta}, \vc{X}^{(+)}}}), \qquad f_{1}(u) = E(e^{u \br{\vc{\theta}, \vc{X}^{(1)}}}), \qquad u \in \dd{R},
\end{eqnarray*}
  as long as $f(u)$ is finite for each fixed $\vc{\theta} \in \dd{R}$.  Obviously, by condition (iii),
\begin{eqnarray*}
  E(\br{\vc{\theta}, \vc{X}^{(+)}}) = \theta_{1} E(X^{(+)}_{1}) + \theta_{2} E(X^{(+)}_{2})
\end{eqnarray*}
   exists and finite for any $\vc{\theta} \in \dd{R}$. Choose a $\vc{\theta} \in \dd{R}$ such that $\theta_{1} > 0$ and $f(1), f_{1}(1) < \infty$. This $\vc{\theta}$ exists by (iii). Since $f(0) = 1$ and $f(u)$ is convex in $u$, 
\begin{eqnarray}
\label{eqn:theta mean 2}
  \br{\vc{\theta}, \vc{m}} = E(\br{\vc{\theta}, \vc{X}^{(+)}}) = f'(0) < 0
\end{eqnarray}
 is necessary and sufficient to have that $f(u_{0}) < 1$ for some $u_{0} > 0$, which is equivalent to that $u_{0} \vc{\theta} \in \Gamma$ and $u_{0} \theta_{1} > 0$. Let $\vc{\theta}' = u_{0} \vc{\theta}$, then \eqn{theta mean 2} is equivalent to that $\br{\vc{\theta}', \vc{m}} < 0$. Using this $\vc{\theta}'$, we apply the same arguments to function $f_{1}$ and can see that $\br{\vc{\theta}', \vc{m}^{(1)}} < 0$ holds if and only if there is a $u_{1} > 0$ such that $u_{1} \vc{\theta}' \in \Gamma_{1}$ and $u_{1} \theta_{1}' > 0$. This implies that $\min(1,u_{1}) \vc{\theta}' \in \Gamma \cap \Gamma_{1}$ since $\Gamma$ and $\Gamma_{1}$ are convex sets. Thus, the claim is proved. 
 
  We next show that \eqn{theta mean 1} for $k=1$ follows from either one of the stability conditions (\rmn{1}),  (\rmn{2}) and  (\rmn{3}). We first assume (\rmn{1}). Since $m^{(1)}_{2} \ge 0$, we consider the possibility that $m^{(1)}_{2} = 0$. In this case, $m^{(1)}_{1} < 0$ by the first inequality in (\rmn{1}) and $m_{2} < 0$. Hence, $\Gamma_{1} \equiv \{ \vc{\theta} \in \dd{R}^{2}; \varphi_{1}(\vc{\theta}) < 1 \}$ is the region between two straight lines $\theta_{1} = 0$ and $\theta_{1} = a$ for some $a > 0$. Since $\Gamma$ is not empty by (iii), we must have \eqn{theta mean 1} for some $\vc{\theta}$ such that $\theta_{1} > 0$ and $\theta_{2} < 0$. We next assume that $m^{(1)}_{2} > 0$. We put $\vc{\theta} = (\theta_{1}, \theta_{2})$ as
\begin{eqnarray*}
  \theta_{1} = m^{(1)}_{2}, \qquad \theta_{2} = \left\{\begin{array}{ll}
  - m^{(1)}_{1} - \epsilon, \quad & m^{(1)}_{1} \ge 0,\\
  - \epsilon, & m^{(1)}_{1} < 0,
  \end{array} \right.
\end{eqnarray*}
  where $\epsilon > 0$ is chosen so that $\br{\vc{m}, \vc{m}^{(1)}_{\bot}} - \epsilon\, m_{2} < 0$ and $m^{(1)}_{2} m_{1} - \epsilon\, m_{2} < 0$, which is possible by  (\rmn{1}) and $m_{1} < 0$. Note that $\theta_{1} > 0$ and $\theta_{2} < 0$ in this definition. Then, we have
\begin{eqnarray*}
  && \br{\vc{\theta}, \vc{m}} = m^{(1)}_{2} m_{1} - ((m^{(1)}_{1} + \epsilon) 1(m^{(1)}_{1} \ge 0) + \epsilon 1(m^{(1)}_{1} < 0)) m_{2}\\
  && \hspace{8ex} = \left(\br{\vc{m}, \vc{m}^{(1)}_{\bot}} - \epsilon\, m_{2} \right) 1(m^{(1)}_{1} \ge 0) + (m^{(1)}_{2} m_{1}-\epsilon\, m_{2}) 1(m^{(1)}_{1} < 0) < 0,\\
  && \br{\vc{\theta}, \vc{m}^{(1)}} = m^{(1)}_{2} m^{(1)}_{1} - ((m^{(1)}_{1} + \epsilon) 1(m^{(1)}_{1} \ge 0) + \epsilon 1(m^{(1)}_{1} < 0))  m^{(1)}_{2}\\
  && \hspace{8ex} = m^{(1)}_{2} m^{(1)}_{1} 1(m^{(1)}_{1} < 0) - \epsilon\, m^{(1)}_{2} < 0.
\end{eqnarray*}
  Thus, we have \eqn{theta mean 1} for $k=1$.
  
  We next assume (\rmn{2}). We consider the possibility that $m_{1} = 0$. In this case, we put $\vc{\theta} = (\theta_{1}, \theta_{2})$ as
\begin{eqnarray*}
  \theta_{1} = - m_{2} - \epsilon, \qquad \theta_{2} = \epsilon,
\end{eqnarray*}
  where $\epsilon > 0$ is chosen so that $- m_{2} - \epsilon > 0$ and $\br{\vc{m}, \vc{m}^{(1)}_{\bot}} - \epsilon (m^{(1)}_{1} - m^{(1)}_{2}) < 0$, which is possible by  (\rmn{2}). In this case, $\theta_{1}, \theta_{2} > 0$. Then, we have
\begin{eqnarray*}
  && \br{\vc{\theta}, \vc{m}} = - (m_{2} + \epsilon) m_{1} + \epsilon\, m_{2} = \epsilon\, m_{2} < 0, \\
  && \br{\vc{\theta}, \vc{m}^{(1)}} = - (m_{2} + \epsilon)  m^{(1)}_{1} + \epsilon\, m^{(1)}_{2} = \br{\vc{m}, \vc{m}^{(1)}_{\bot}} - \epsilon (m^{(1)}_{1} - m^{(1)}_{2}) < 0.
\end{eqnarray*}
  Thus, we can assume that $m_{1} > 0$. We choose $\epsilon > 0$ such that $\br{\vc{m}, \vc{m}^{(1)}_{\bot}} + \epsilon\, m_{1} < 0$, which is possible by (\rmn{2}), and put
\begin{eqnarray*}
  \theta_{1} = m^{(1)}_{2} + \epsilon, \qquad \theta_{2} = - m^{(1)}_{1}.
\end{eqnarray*}
  Then, $\theta_{1} > 0$ and $\theta_{2} > 0$ since $m^{(1)}_{2} \ge 0$ and $m^{(1)}_{1} < 0$ by (\rmn{2}). Hence, we have
\begin{eqnarray*}
  && \br{\vc{\theta}, \vc{m}} = (m^{(1)}_{2} + \epsilon) m_{1} - m^{(1)}_{1} m_{2} = \br{\vc{m}, \vc{m}^{(1)}_{\bot}} + \epsilon\, m_{1} < 0, \\
  && \br{\vc{\theta}, \vc{m}^{(1)}} = (m^{(1)}_{2} + \epsilon)  m^{(1)}_{1} - m^{(1)}_{1} m^{(1)}_{2} = \epsilon\, m^{(1)}_{1} < 0.
\end{eqnarray*}

  We finally assume (\rmn{3}). In this case, if $m^{(1)}_{2} = 0$, then $m^{(1)}_{1} < 0$, and therefore $\Gamma_{1}$ is the region between $\theta_{1} = 0$ and $\theta_{1} = b$ for some $b > 0$.  Since $m_{1} < 0$ and $m_{2} \ge 0$, we can easily see that \eqn{theta mean 1} holds true for $\theta_{1} > 0$ and $\theta_{2} \le 0$. Thus, we can assume that $m^{(1)}_{2} > 0$, and therefore we can choose $\epsilon > 0$ such that
\begin{eqnarray*}
  \epsilon\, m^{(1)}_{1} + m^{(1)}_{2} m_{1} < 0,
\end{eqnarray*}
  and put $\vc{\theta} = (\epsilon, m_{1})$. Then, $\theta_{1} > 0$, $\theta_{2} < 0$, and
\begin{eqnarray*}
  && \br{\vc{\theta}, \vc{m}} = \epsilon\, m_{1} + m_{1} m_{2} < 0, \\
  && \br{\vc{\theta}, \vc{m}^{(1)}} = \epsilon\, m^{(1)}_{1} + m_{1} m^{(1)}_{2} < 0.
\end{eqnarray*}
  Thus, we have shown \eqn{theta mean 1} for $k=1$. Furthermore, $\theta_{2} \le 0$ for (\rmn{1}) and (\rmn{3}). By symmetric arguments, \eqn{theta mean 1} for $k=2$ is obtained, and $\theta_{1} < 0$ for (\rmn{1}) and (\rmn{2}). Thus, either one of the stability conditions of \lem{stability d=2} implies \eqn{theta mean 1}. The converse is immediate from \eqn{theta mean 1} and the observation that is presented just before this lemma.

\section{Proof of \lem{K c}}
\label{app:K c}
\setnewcounter

Obviously, if either $c_{1}$ or $c_{2}$ vanishes, then $K_{\vc{c}}$ is arithmetic. Hence, we assume that $c_{1} \ne 0$ and $c_{2} \ne 0$. If $c_{1}/c_{2}$ is rational, we obviously see that $K_{\vc{c}}$ is arithmetic. Thus, we only need to prove that $K_{\vc{c}}$ is asymptotically dense at infinity if $c_{1}/c_{2}$ is irrational. For this, we combine the ideas which are used for the proofs of Lemma 2 and Corollary in Section V.4a of \cite{Fell1971}.

Assume that $c_{1}/c_{2}$ is irrational and $c_{1} < c_{2}$. The latter can be assumed without loss of generality because the roles of $c_{1}$ and $c_{2}$ are symmetric. For each positive integer $n$, let
\begin{eqnarray*}
  A(n) = \{ c_{1} m_{1} - c_{2} m_{2} ; 0 \le c_{1} m_{1} - c_{2} m_{2} \le c_{2}, m_{2} \le n, m_{1}, m_{2} \in \dd{Z}_{+} \}.
\end{eqnarray*}
then the number of elements of $A(n)$ is strictly increased as $n$ is increased because of the irrationality. Hence, for each $\epsilon > 0$, we can find positive integer $n$ such that there are $u, u' \in A(n)$ such that $|u - u'| < \epsilon$ because $A(n)$ is a subset of the interval $[0,c_{2}]$. Since we can find $m_{1}, m_{2}, m_{1}', m_{2}'$ such that $m_{1} > m_{1}'$, $u = c_{1} m_{1} - c_{2} m_{2}$ and $u' = c_{1} m_{1}' - c_{2} m_{2}'$, we have
\begin{eqnarray*}
  |c_{1} (m_{1} - m_{1}') - c_{2}( m_{2} - m_{2}')| = |u - u'| < \epsilon.
\end{eqnarray*}
Since $m_{1} > m_{1}'$, we obviously require that $m_{2} > m_{2}'$. Hence, we put $a = c_{2}( m_{2} - m_{2}')$, then, for each $x \ge a$, we have $|x - y| < \epsilon$ for some $y \in K_{\vc{c}}$.

\section{The proof of \lem{stationary equation 2}}
\label{app:stationary inequality}

  We only prove this lemma when (\sect{Double}a) is satisfied because the other cases are similarly proved. We immediately have \eqn{stationary equation 1} if $\varphi_{+}(\vc{\theta}) < \infty$. For proving this finiteness, we apply truncation arguments for \eqn{stationary equation 0}. For each $n=1,2,\ldots$, let
\begin{eqnarray*}
  f_{n}(x) = \min(x,n), \qquad x \in \dd{R}.
\end{eqnarray*}
  If $x \le n$, then $f_{n}(x+y) \le x+y = f_{n}(x) + y$. Otherwise, if $x > n$, then $f_{n}(x+y) \le n = f_{n}(x)$. Hence, for any $x \ge 0$ and $y \in \dd{R}$,
\begin{eqnarray}
\label{eqn:truncation 1}
  f_{n}(x+y) \le f_{n}(x) + \left\{\begin{array}{ll}
  y, \quad & x \le n,\\
  0, & x > n.
  \end{array} \right.
\end{eqnarray}
  From \eqn{stationary equation 0}, we have
\begin{eqnarray}
\label{eqn:stationary equation 0a}
 && \br{\vc{\theta}, \vc{L}} \simeq \br{\vc{\theta}, \vc{L}} + \br{\vc{\theta}, \vc{X}^{(+)}} 1(\vc{L} \in S_{+}) + \sum_{k \in \{0, 1, 2\}} \br{\vc{\theta}, \vc{X}^{(k)}} 1(\vc{L} \in S_{k}).
\end{eqnarray}
  Hence, we have, using the independence of $\vc{L}, \vc{X}^{(0)}, \vc{X}^{(1)}$ and $\vc{X}^{(2)}$,
\begin{eqnarray*}
  E(e^{f_{n}(\br{\vc{\theta}, \vc{L}})}) &\le& E(e^{f_{n}(\br{\vc{\theta}, \vc{L}})} 1(\vc{L} \in S_{+}, \br{\vc{\theta}, \vc{L}} \le n)) E(e^{f_{n}\br{\vc{\theta}, \vc{X}^{(+)}})})\\
  && + E(e^{f_{n}(\br{\vc{\theta}, \vc{L}})} 1(\vc{L} \in S_{+}, \br{\vc{\theta}, \vc{L}} > n))\\
  && + \sum_{k \in \{0, 1, 2\}} E(e^{f_{n}(\br{\vc{\theta}, \vc{L}})} 1(\vc{L} \in S_{k}, \br{\vc{\theta}, \vc{L}} \le n)) E(e^{f_{n}\br{\vc{\theta}, \vc{X}^{(k)}})})\\
  && +\sum_{k \in \{0, 1, 2\}} E(e^{f_{n}(\br{\vc{\theta}, \vc{L}})} 1(\vc{L} \in S_{k}, \br{\vc{\theta}, \vc{L}} > n)).
\end{eqnarray*}
  Rewriting the left side as
\begin{eqnarray*}
  E(e^{f_{n}(\br{\vc{\theta}, \vc{L}})} 1(\vc{L} \in S_{+})) + \sum_{k \in \{0,1,2\}} E(e^{f_{n}(\br{\vc{\theta}, \vc{L}})} 1(\vc{L} \in S_{k})),
\end{eqnarray*}
  we have
\begin{eqnarray*}
  \lefteqn{ (1 - E(e^{f_{n}\br{\vc{\theta}, \vc{X}^{(+)}})})) E(e^{f_{n}(\br{\vc{\theta}, \vc{L}})} 1(\vc{L} \in S_{+}, \br{\vc{\theta}, \vc{L}} \le n))} \hspace{5ex}\\
  && \le \sum_{k \in \{0, 1, 2\}} (E(e^{f_{n}\br{\vc{\theta}, \vc{X}^{(k)}})}) - 1) E(e^{f_{n}(\br{\vc{\theta}, \vc{L}})} 1(\vc{L} \in S_{k}, \br{\vc{\theta}, \vc{L}} \le n)).
\end{eqnarray*}
  Let $n$ go to infinity for this inequality, then the monotone convergence theorem and the finiteness of $\varphi_{1}(\vc{\theta})$ and $\varphi_{2}(\vc{\theta})$ yield
\begin{eqnarray*}
  0 \le (1 - \gamma(\vc{\theta}))\varphi_{+}(\vc{\theta}) \le \sum_{k \in \{1,2\}} (\gamma_{k}(\vc{\theta}) - 1) \varphi_{k}(\theta_{k}) + (\gamma_{0}(\vc{\theta}) - 1) \varphi_{0}(0),
\end{eqnarray*}
  since $\vc{\theta} \in \Gamma$ and $f_{n}(x)$ is nondecreasing in $x$. The right side of this inequality is finite and $1 - \gamma(\vc{\theta}) > 0$, we must have $\varphi_{+}(\vc{\theta}) < \infty$.

\section{Convergence parameter and decay rate}
\label{app:convergence parameter}
\setnewcounter

Let us consider a nonnegative integer valued random variable $Z$ with light tail. Let
\begin{eqnarray}
\label{eqn:finite alpha}
  \alpha^{*} = \sup\{\alpha \ge 0; E \exp(\alpha Z) < \infty \}.
\end{eqnarray}
  $\alpha^{*} > 0$ by the light tail condition. We give an example such that 
\begin{eqnarray}
\label{eqn:rough rate}
  \lim_{x \to \infty} x^{-1} \log P(Z > x) = -\alpha^{*}
\end{eqnarray}
is not true. One can easily see that
\begin{eqnarray*}
  \limsup_{x \to \infty} \frac 1x \log P( Z > x) = - \alpha^{*}.
\end{eqnarray*}
Hence, the problem is the limit infimum. Define the distribution function $F$ of a random variable $Z$ by
\begin{eqnarray*}
  \ol{F}(x) = \left\{\begin{array}{ll}
  1, \quad & x \le 1,\\
  e^{-\alpha^{*} 2^{n}}, \quad & 2^{n-1} < x \le 2^{n}, n= 1,\ldots,
  \end{array} \right.
\end{eqnarray*}
where $\ol{F}(x) = 1 - F(x)$, then
\begin{eqnarray*}
 && \lim_{n \to \infty} \frac 1{2^{n}} \log \ol{F}(2^{n}) = \lim_{n \to \infty} \frac 1{2^{n}} (-\alpha^{*} 2^{n}) = - \alpha^{*},\\
 && \lim_{n \to \infty} \frac 1{2^{n}+1} \log \ol{F}(2^{n}+1) = \lim_{n \to \infty} \frac 1{2^{n} + 1} (-\alpha^{*} 2^{n+1}) = - 2 \alpha^{*}.
\end{eqnarray*}
  Thus, \eqn{finite alpha} holds, but we have
\begin{eqnarray*}
  \liminf_{x \to \infty} \frac 1x \log P( Z > x) = - 2 \alpha^{*} < - \alpha^{*} = \limsup_{x \to \infty} \frac 1x \log P( Z > x).
\end{eqnarray*}
  The distribution function $F(x)$ is a little tricky because its increasing points are sparse as $x$ goes to infinity. However, it is not very difficult to make a small change for it to increase at all positive integers. Similar examples are obtained in the literature (e.g., see Section 2.3 of \cite{Naka2004}). 
  
\section{Proof of \lem{tau}}
\label{app:tau}
\setnewcounter

  We first prove $\vc{\theta}^{(\triangle, \infty)} = \vc{\tau}$ for the three cases separately.  For (D1), suppose that $\theta^{(k, \infty)}_{k} < \theta^{(k, \cp)}_{k}$ for $k=1,2$. Note that $\theta^{(2, \infty)}_{1} < \theta^{(1, \infty)}_{1}$ and $\theta^{(1, \infty)}_{2} < \theta^{(2, \infty)}_{2}$ hold by (D1). Since $\vc{\theta}^{(1, \infty)} \le \vc{\theta}^{(\triangle, \infty)}$ and $\vc{\theta}^{(2, \infty)} \le \vc{\theta}^{(\triangle, \infty)}$, at least one of $\theta^{(1, \infty)}_{1}$ and $\theta^{(2, \infty)}_{2}$ must be increased by the right hand side of \eqn{theta-infty}. This contradicts the supposition. Hence, either $\theta^{(1, \infty)}_{1} = \theta^{(1, \cp)}_{1}$ or $\theta^{(2, \infty)}_{2} = \theta^{(2, \cp)}_{2}$ holds. Suppose that $\theta^{(1, \infty)}_{1} = \theta^{(1, \cp)}_{1}$ and $\theta^{(2, \infty)}_{2} < \theta^{(2, \cp)}_{2}$. Then, from condition (D1), $\theta^{(2, \infty)}_{2}$ must be increased again by the right hand side of \eqn{theta-infty}. This is a contradiction. Similarly, it is impossible that $\theta^{(1, \infty)}_{1} < \theta^{(1, \cp)}_{1}$ but $\theta^{(2, \infty)}_{2} = \theta^{(2, \cp)}_{2}$. Thus, we must have that $\theta^{(k, \infty)}_{k} = \theta^{(k, \cp)}_{k}$ for $k=1,2$.
  
  For (D2), we can apply similar arguments as above if we replace $\theta^{(1, \cp)}_{1}$ by $\ol{\xi}_{1}(\theta^{(2, \cp)}_{2})$. For (D3), we replace $\theta^{(2, \cp)}_{2}$ by $\ol{\xi}_{2}(\theta^{(1, \cp)}_{1})$. Thus, we get \eqn{tau} for all the three cases since $\tau_{1} = \theta^{(1, \infty)}_{1}$ and $\tau_{2} = \theta^{(2, \infty)}_{2}$.
  
    The remaining part of this lemma is immediate since $\varphi(\vc{\theta}) < \infty$ for all $\vc{\theta} < (\theta_{1}^{(\triangle, n)}, \ul{\xi}_{2}(\theta_{1}^{(\triangle, n)}))$ and for all $\vc{\theta} < (\ul{\xi}_{1}(\theta_{2}^{(\triangle, n}), \theta_{2}^{(\triangle, n)})$ are inductively obtained by \lem{stationary equation 2}.

\section{The proof of \lem{infinite domain}}
\label{app:infinite domain}
\setnewcounter

  We will use the random walk $\{\vc{Y}(\ell)\}$ introduced in \sectn{Double}. We apply the permutation arguments in Lemma 5.6 of \cite{BoroMogu2001} twice. Then, we have, for any positive integer $n$ and any $\vc{x} > \vc{0}$,
\begin{eqnarray}
\label{eqn:lower bound 2}
  \lefteqn{P( \vc{Y}(n) \in \vc{x} + \Delta(\vc{a}), \min_{1 \le \ell \le n} Y_{1}(\ell) > 0, \min_{1 \le \ell \le n} Y_{2}(\ell) > 0 | \vc{Y}(0) = \vc{0} )} \hspace{10ex}\\
  && \ge \frac 1n P( \vc{Y}(n) \in \vc{x} + \Delta(\vc{a}), \min_{1 \le \ell \le n} Y_{2}(\ell) > 0 | \vc{Y}(0) = \vc{0} ) \nonumber\\
  && \ge \frac 1{n^{2}} P( \vc{Y}(n) \in \vc{x} + \Delta(\vc{a}) | \vc{Y}(0) = \vc{0}).\nonumber
\end{eqnarray}
  We next note the well known Cram\'{e}r's theorem (e.g., see Theorem 2 of \cite{BoroMogu2000} and Section 3.5 of \cite{DupuElli1997}).
\begin{eqnarray}
\label{eqn:Cramer 1}
  \lim_{n \to \infty} \frac 1{n} \log P(\vc{Y}(n) \in n \vc{x} + \Delta(\vc{a}))) = - \Lambda(\vc{x}),
\end{eqnarray}
  where $\Lambda(\vc{x}) = \sup_{\vc{\theta} \in \dd{R}^{2}} \{ \br{\vc{\theta}, \vc{x}} - \log \varphi(\vc{\theta}) \}$.
  
  Since the random walk $\{\vc{Y}(\ell)\}$ is identical that of $\{\vc{L}(\ell)\}$ as long as they are inside of the quadrant $S$, \eqn{lower bound 2} can be written as, for $\vc{y} \in S_{+}$,
\begin{eqnarray}
\label{eqn:lower bound 3}
  P( \vc{L}(n) \in \vc{x} + \Delta(\vc{a}), \sigma_{0} > n | \vc{L}(0) = \vc{y} ) \ge \frac 1{n^{2}} P( \vc{Y}(n) \in \vc{x} - \vc{y} + \Delta(\vc{a}) | \vc{Y}(0) = \vc{0}). 
\end{eqnarray}
  where $\sigma_{0} = \inf\{ \ell \ge 1; \vc{L}(\ell) \in \partial S \}$. It follows from the representation \eqn{occupation measure 1} with $B = \partial S$ for the stationary distribution that there are some $\vc{y}_{0} \in \partial S$ and $\vc{y}_{1} \in S_{+}$ such that $p(\vc{y}_{0}, \vc{y}_{1}) > 0$ and, for any $m \ge 1$,
\begin{eqnarray*}
  \lefteqn{P( \vc{L} \in n \vc{c} + \Delta(\vc{a}) )}\\
  && = \frac 1{E_{\nu}(\sigma_{0})} \sum_{\vc{y} \in \partial S} \sum_{\vc{y}' \in S_{+}} \nu(\vc{y}) p(\vc{y}, \vc{y}') \sum_{\ell = 1}^{\infty} P( \vc{L}(\ell) \in n \vc{c} + \Delta(\vc{a}), \sigma_{0} > \ell| \vc{L}(0) = \vc{y}' ) \\
  && \ge \frac 1{E_{\nu}(\sigma_{0})} P( \vc{L}(m) \in n \vc{c} + \Delta(\vc{a}), \sigma_{0} > m| \vc{L}(0) = \vc{y}_{1} ) \nu(\vc{y}_{0}) p(\vc{y}_{0}, \vc{y}_{1})\\
  && \ge \frac 1{m^{2} E_{\nu}(\sigma_{0})} P( \vc{Y}(m) \in n \vc{c} - \vc{y}_{1} + \Delta(\vc{a}) | \vc{Y}(0) = \vc{0}) \nu(\vc{y}_{0}) p(\vc{y}_{0}, \vc{y}_{1}).
\end{eqnarray*}

  Thus, for each $t > 0$, letting $m, n \to \infty$ in such a way that $n/m \to t$, we have
\begin{eqnarray*}
  \lim_{n \to \infty} \frac 1{n} \log P( \vc{L} \in n \vc{c} + \Delta(\vc{a}) ) &\ge&  \lim_{n \to \infty} \frac m{n} \frac 1m \log P\left( \vc{Y}(m) \in m \frac nm \vc{c} - \vc{y}_{1} + \Delta(\vc{a}) \right) \\
  &=& - \frac 1t \Lambda(t\vc{c}).
\end{eqnarray*}
  Since $t> 0$ can be arbitrary, this implies that
\begin{eqnarray*}
  \lim_{n \to \infty} \frac 1{n} \log P( \vc{L} \in n \vc{c} + \Delta(\vc{a}) ) \ge - \inf_{ t > 0} \frac 1t \Lambda(t\vc{c}) = - \sup\{ \br{\vc{\theta}, \vc{c}}; \gamma(\vc{\theta}) \le 1 \},
\end{eqnarray*}
  where the last equality is obtained from Theorem 1 of \cite{BoroMogu1996a} (see also Theorem 13.5 of \cite{Rock1970}).
  
\begin{figure}[h]
 	\centering
	\includegraphics[height=4.5cm]{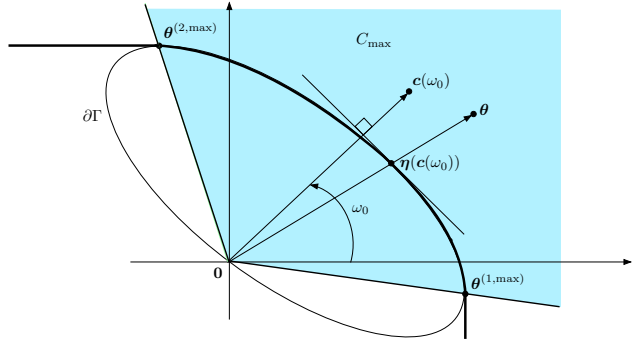}
	\caption{The positions of $\vc{\theta}$, $\vc{c}(\omega_{0})$, $\vc{\eta}(\vc{c}(\omega_{0}))$ and the cone $C_{\max}$ (blue colored area)}
	\label{fig:domain 1}
\end{figure}

  It remains to prove that $\vc{\theta} \not\in \ol{\Gamma}_{\max}$ implies $\varphi(\vc{\theta}) = \infty$. Define the cone $C_{max}$ as
\begin{eqnarray*}
  C_{max} = \{\vc{x} \in \dd{R}^{2}; \vc{x} = s \vc{\theta}^{(1, \max)} + t \vc{\theta}^{(2, \max)}, s,t \ge 0\}.
\end{eqnarray*}
  If $\vc{\theta} \not\in C_{\max} \setminus \ol{\Gamma}_{\max}$, then either $\theta_{1} > \theta^{(1,\max)}_{1}$ or $\theta_{2} > \theta^{(2,\max)}_{2}$ holds. Hence, $\varphi(\vc{\theta}) = \infty$ in this case by \lem{lower bound c}. Thus, we only need to consider $\vc{\theta} \in C_{\max} \setminus \ol{\Gamma}_{\max}$.
  
  Since $\ol{\Gamma}$ is a closed convex set that contains $\vc{0}$, there exists a unique $\vc{\eta} \in \ol{\Gamma}$ that maximizes $\br{\vc{\eta}, \vc{c}}$ for each $\vc{c}$. Denote this $\vc{\eta}$ by $\vc{\eta}(\vc{c})$. That is,
\begin{eqnarray*}
  \br{\vc{\eta}(\vc{c}), \vc{c}} = \sup\{ \br{\vc{\eta}, \vc{c}}; \gamma(\vc{\eta}) \le 1 \}.
\end{eqnarray*}
  Let $\vc{c}(\omega) = (\cos \omega, \sin \omega)$, then the $\vc{\eta}(\vc{c}(\omega))$ continuously moves on $\partial \Gamma \cap C_{\max}$ from $\vc{\theta}^{(1, \max)}$ to $\vc{\theta}^{(2, \max)}$ as $\omega$ is increased on $(-\frac 12 \pi, \pi)$. Then, as can been seen in \fig{domain 1}, there is an $\omega_{0} \in (-\frac 12 \pi, \pi)$ for $\vc{\theta} \in C_{\max} \setminus \ol{\Gamma}_{\max}$ such that $\vc{\eta}(\vc{c}(\omega_{0}))$ has the same direction as $\vc{\theta}$, which implies that  $\vc{\theta} = a \vc{\eta}(\vc{c}(\omega_{0}))$ for some $a > 1$. For this $\omega_{0}$, we have
\begin{eqnarray*}
  \br{\vc{\theta}, \vc{c}(\omega_{0})} = a \br{\vc{\eta}(\vc{c}(\omega_{0})), \vc{c}(\omega_{0})} > \br{\vc{\eta}(\vc{c}(\omega_{0})), \vc{c}(\omega_{0})}.
\end{eqnarray*}
   Hence, the Markov's inequality,
\begin{eqnarray}
\label{eqn:Markov inequality 1}
  \varphi(\vc{\theta}) \ge e^{n \br{\vc{\theta}, \vc{c}(\omega_{0})}} P( \vc{L} \in n \vc{c}(\omega_{0}) + \Delta(\vc{a})),
\end{eqnarray}
  and \eqn{lower bound 1} yield that
\begin{eqnarray*}
  \liminf_{n \to \infty} \frac 1n \log \varphi(\vc{\theta}) \ge \br{\vc{\theta}, \vc{c}(\omega_{0})} - \br{\vc{\eta}(\vc{c}(\omega_{0})), \vc{c}(\omega_{0})} > 0.
\end{eqnarray*}
  This concludes $\varphi(\vc{\theta}) = \infty$, which completes the proof.

\section{Proof of \eqn{H limit}}
\label{app:renewal theorem}
\setnewcounter

  For changing measures for the random walk $\{\vc{Y}(\ell)\}$, we define $\{\hat{\vc{Y}}^{(1)}(\ell)\}$ by
\begin{eqnarray*}
   P(\hat{\vc{Y}}^{(1)}(\ell+1) = (n,j) | \hat{\vc{Y}}^{(1)}(\ell) = (m,i)) = e^{\tau_{1} (n-m) - \tau_{2} (i - j)} P(\vc{X}^{(+)} = (n-m, -(i-j))),
\end{eqnarray*}
  which is well defined since because $\gamma(\vc{\tau}) = 1$ because of (D2). Note that, for $k=1,2$,
\begin{eqnarray*}
  E\left( \left. \hat{Y}^{(1)}_{k}(\ell+1) \right| \hat{Y}^{(1)}_{k}(\ell) = 0 \right) = (-1)^{k-1} E\left( X^{(+)}_{k} e^{\br{\vc{\tau}, \vc{X}^{(+)}}} \right) = (-1)^{k-1} \left. \frac {\partial}{\partial \theta_{k}} \gamma(\vc{\theta}) \right|_{\vc{\theta} = \vc{\tau}}
\end{eqnarray*}
  is finite and positive because of (D2) and the assumption that $\tau_{1} < \theta^{(1, \cp)}_{1}$. We denote this expectation by $\mu_{k}$.
  
  Recall that $\tilde{\sigma}^{(1)}_{1}(n)$ is the $n$-th increasing instant of the Markov additive process $\{\tilde{\vc{Z}}^{(1)}(\ell)\}$. We introduce similar instants for $\{\hat{\vc{Y}}(\ell)\}$. Let
\begin{eqnarray*}
  \hat{\zeta}^{(1)}_{1}(n) = \inf \{\ell \ge \hat{\zeta}^{(1)}_{1}(n-1); \hat{Y}^{(1)}_{1}(\ell) -  \hat{Y}^{(1)}_{1}(\hat{\zeta}^{(1)}_{1}(n-1)) \ge 1 \}.
\end{eqnarray*}  
  For convenience, we also introduce the following events. Let, for $1 \le n_{0} \le n$, $m \ge 1$ and $j \ge 0$,
\begin{eqnarray*}
 && \tilde{A}^{\vc{Z}}_{n_{0},\ell}(m, k) = \{\tilde{\vc{Z}}^{(1)}(\tilde{\sigma}^{(1)}_{1}(\ell)) - \tilde{\vc{Z}}^{(1)}(\tilde{\sigma}^{(1)}_{1}(n_{0})) = (m,k) \}, \\
 && \hat{A}^{\vc{Y}}_{n_{0},\ell}(m, k) = \{\tilde{\vc{Y}}^{(1)}(\tilde{\sigma}^{(1)}_{1}(\ell)) - \tilde{\vc{Y}}^{(1)}(\tilde{\sigma}^{(1)}_{1}(n_{0})) = (m,k) \}, \\  
 && \tilde{B}^{Z_{1}}_{n_{0},n}(m) = \cup_{\ell = n_{0}+1}^{n} \cup_{k=0}^{\infty} \tilde{A}^{\vc{Z}}_{n_{0},\ell}(m, k), \qquad \hat{B}^{Y_{1}}_{n_{0},n}(m) = \cup_{\ell = n_{0}+1}^{n} \cup_{k=-\infty}^{\infty} \hat{A}^{\vc{Y}}_{n_{0},\ell}(m, k),\\
 && \tilde{C}^{Z_{2}}_{n_{0},n}(j) = \Big\{\inf_{\tilde{\sigma}^{(1)}_{1}(n_{0}) < \ell \le \tilde{\sigma}^{(1)}_{1}(n)} \tilde{Z}^{(1)}_{2}(\ell) \ge 1, \tilde{Z}^{(1)}_{2}(\tilde{\sigma}^{(1)}_{1}(n_{0})) = j \Big\},\\
 && \hat{C}^{Y_{2}}_{n_{0},n}(j) = \Big\{\inf_{\hat{\zeta}^{(1)}_{1}(n_{0}) < \ell \le \hat{\zeta}^{(1)}_{1}(n)} \hat{Y}^{(1)}_{2}(\ell) \ge 1, \hat{Y}^{(1)}_{2}(\hat{\zeta}^{(1)}_{1}(n_{0})) = j \Big\}.
\end{eqnarray*}
  Since $\tilde{A}^{(1)}$ is transient, $\tilde{Z}^{(1)}_{2}(n)$ goes to $+\infty$ as $n \to \infty$ with probability one. This fact is alternatively verified by using $\{\hat{\vc{Y}}^{(1)}(\ell)\}$ (see \eqn{zeta Z 5} and \eqn{x asymptotic} below). Similarly, for $k=1,2$, $\hat{Y}^{(1)}_{k}(n)$ diverges as $n \to \infty$ by $\mu_{k} > 0$. Hence, we can find a positive integer $n_{0}$ for any $i, j_{0}, k_{0}, \ell_{0} \ge 1$ such that, for any $n \ge n_{0}$,
\begin{eqnarray}
\label{eqn:zeta Z 1}
 && \sum_{j \ge j_{0}} P(\tilde{C}_{n_{0}, n}^{Z_{2}}(j)|\tilde{Z}^{(1)}_{2} (0) = i) > 1 - \epsilon,\\
\label{eqn:zeta Z 2}
 && \sum_{j \ge j_{0}} \sum_{k \ge k_{0}} P(\hat{C}_{n_{0},n}^{Y_{2}}(j) \cap \{\hat{Y}^{(1)}_{2}(\hat{\zeta}^{(1)}_{1}(n)) = k\} | \hat{\vc{Y}}^{(1)}(0) = \vc{0} ) > 1 - \epsilon,\\
\label{eqn:zeta Z 3}
 && P( \tilde{\vc{Z}}^{(1)}(\tilde{\sigma}^{(1)}_{1}(n)) \ge (\ell_{0},j_{0}) |\tilde{Z}^{(1)}_{2} (0) = i) > 1 - \epsilon.
\end{eqnarray}
We further choose $n_{1} > n_{0}$ such that, for any $n \ge n_{1}$,
\begin{eqnarray}
\label{eqn:zeta Z 4}
  P(\tilde{B}_{0,n_{0}}^{Z_{1}}(n) |\tilde{Z}^{(1)}_{2} (0) = i) < \epsilon.
\end{eqnarray}
  because $\tilde{Z}^{(1)}_{1}(\tilde{\sigma}^{(1)}_{1}(n_{0}))$ is finite with probability 1.

  We now compute, for $n \ge n_{0}$ and $j \ge 1$,
\begin{eqnarray}
\label{eqn:zeta Z 5}
  \lefteqn{P( \tilde{B}_{n_{0},n}^{Z_{2}}(n) \cap \tilde{C}^{Z_{2}}_{n_{0}, n}(j) |\tilde{Z}^{(1)}_{2} (0) = i)} \nonumber \\
 &&  = \sum_{\ell \ge 1} P( \tilde{B}_{n_{0},n}^{Z_{1}}(n) \cap \tilde{C}^{Z_{2}}_{n_{0}, n}(j) | \tilde{\vc{Z}}^{(1)}(\tilde{\sigma}^{(1)}_{1}(n_{0})) = (\ell,j)) \nonumber \\
 && \hspace{25ex} \times P( \tilde{\vc{Z}}^{(1)}(\tilde{\sigma}^{(1)}_{1}(n_{0})) = (\ell,j) |\tilde{Z}^{(1)}_{2} (0) = i) \nonumber \\
 &&  = \sum_{\ell \ge 1} \sum_{k \ge 1} \sum_{s = n_{0}+1}^{n} \frac {e^{-\tau_{2}k}} {x_{k}} P(\hat{A}_{n_{0},s}^{\vc{Y}}(n,k) \cap \hat{C}^{Y_{2}}_{n_{0}, n}(j) ) \frac {x_{j}} {e^{-\tau_{2}j}} \nonumber \\
 && \hspace{25ex} \times P( \tilde{\vc{Z}}^{(1)}(\tilde{\sigma}^{(1)}_{1}(n_{0})) = (\ell,j) |\tilde{Z}^{(1)}_{2} (0) = i),
\end{eqnarray}
  where the last equality is obtained from the definitions of $\{\tilde{\vc{Z}}^{(1)}(n)\}$ and $\{\hat{\vc{Y}}^{(1)}(n)\}$ and the fact that $\{\hat{\vc{Y}}^{(1)}(n)\}$ is a random walk.

We next consider to apply the renewal theorem. For this let
\begin{eqnarray*}
  \mu^{(1)}_{\sigma} = E(\hat{Y}^{(1)}_{1}(\hat{\zeta}^{(1)}_{1}(1))|\hat{Y}^{(1)}_{1}(0) = 0)
\end{eqnarray*}
  then $\mu^{(1)}_{\sigma}$ is finite because $\hat{Y}^{(1)}_{1}(n)$ has drifts to $+\infty$ by $\mu^{(1)} > 0$ and its increments have a finite expectation (see Theorem 2.4 in Chapter VIII of \cite{Asmu2003}). Hence, it follows from the renewal theorem that
\begin{eqnarray*}
  \lim_{n \to \infty} \sum_{\ell=1}^{n} P\left( \hat{Y}^{(1)}_{1}(\hat{\zeta}^{(1)}_{1}(\ell)) = n \right) = \frac 1{\mu^{(1)}_{\sigma}}.
\end{eqnarray*}
  Obviously, this yields, for each fixed $n_{0} \ge 0$,
\begin{eqnarray}
\label{eqn:renewal 1}
  \lim_{n \to \infty} P\left( \hat{B}^{Y_{1}}_{n_{0},n}(n) \right) = \frac 1{\mu^{(1)}_{\sigma}}.
\end{eqnarray}

  From \eqn{zeta Z 1} and \eqn{zeta Z 4}, it follows that, for sufficiently large $n$,
\begin{eqnarray}
\label{eqn:zeta Z 6}
  \left|P( \tilde{B}_{n_{0},n}^{Z_{1}}(n)|\tilde{Z}^{(1)}_{2} (0) = i) - \sum_{j \ge j_{0}} P( \tilde{B}_{n_{0},n}^{Z_{1}}(n) \cap \tilde{C}^{Z_{1}}_{n_{0}, n}(j) |\tilde{Z}^{(1)}_{2} (0) = i) \right| < \epsilon.
\end{eqnarray}
  Similarly,
\begin{eqnarray}
\label{eqn:zeta Z 7}
  \left|P( \hat{B}_{n_{0},n}^{Y_{1}}(n)) - \sum_{j \ge j_{0}} P( \hat{B}_{n_{0},n}^{Y_{1}}(n) \cap \hat{C}^{Y_{1}}_{n_{0}, n}(j) ) \right| < \epsilon.
\end{eqnarray}

  By \eqn{x i convergence}, we can choose sufficiently large $ j_{0}, k_{0}$ such that
\begin{eqnarray}
\label{eqn:x asymptotic}
  \left| 1 - \frac {e^{-\tau_{2}k}} {x_{k}} \right| \left|1 - \frac {x_{j}} {e^{-\tau_{2}j}} \right| < \epsilon, \qquad j \ge j_{0}, k \ge k_{0}.
\end{eqnarray}
  We then sum both side of \eqn{zeta Z 5} for $j \ge j_{0}$. Further, using \eqn{zeta Z 2} and \eqn{zeta Z 3} for sufficiently large $\ell_{0}$, we apply \eqn{renewal 1} to the sum as $n \to \infty$, then letting $\epsilon \to 0$ yield
\begin{eqnarray*}
  \lim_{n \to \infty} \sum_{j \ge j_{0}} P( \tilde{B}_{n_{0},n}^{Z_{1}}(n) \cap \tilde{C}^{Z_{1}}_{n_{0}, n}(j) |\tilde{Z}^{(1)}_{2} (0) = i) = \frac 1{\mu^{(1)}_{\sigma}}.
\end{eqnarray*}
  Hence, by \eqn{zeta Z 4}, \eqn{zeta Z 6} and \eqn{zeta Z 7}, we have
\begin{eqnarray*}
  \limsup_{n \to \infty} \left|P( \tilde{B}_{n_{0},n}^{Z_{1}}(n)|\tilde{Z}^{(1)}_{2} (0) = i) - \frac 1{\mu^{(1)}_{\sigma}} \right| < \epsilon.
\end{eqnarray*}
  Thus, we have \eqn{H limit} by letting $\epsilon \downarrow 0$.

\section*{Acknowledgements} This work is supported in part by Japan Society for the Promotion of Science under grant No.\ 24310115. It has been partly done during the second author visited the Isaac Newton Institute in Cambridge under the program titled, Stochastic Processes in Communication Sciences, from February and March in 2010. The second author is grateful to the institute and program committee for providing stimulus environment for research. 

\medskip

\bibliographystyle{ims}
\bibliography{dai05152013a1}

\end{document}